\documentclass[11pt,draft,reqno]{amsart}
\usepackage{amsmath,amssymb,amscd,amsfonts,amsthm}
\usepackage{graphics}
\usepackage{dsfont}
\usepackage[all]{xy}
\usepackage{enumerate}

%%%%%%%%%%%%%%%%%%%%%%%%%%%%%%%

%%\headheight=8pt
\topmargin=16pt
\textheight=610pt%%615pt
%%620pt
%%\textwidth=400pt \oddsidemargin=38pt
%%\marginparwidth 22pt
%%\evensidemargin=38pt
%%\textwidth=370pt
%%\oddsidemargin=50pt
%%\evensidemargin=50pt
\textwidth=360pt
\oddsidemargin=55pt
\evensidemargin=55pt

%%%%%%%%%%%%%%%%%%%%%%%%%%%%%%%

%for mnotes
{\catcode`\@=11
\gdef\n@te#1#2{\leavevmode\vadjust{%
 {\setbox\z@\hbox to\z@{\strut#1}%
  \setbox\z@\hbox{\raise\dp\strutbox\box\z@}\ht\z@=\z@\dp\z@=\z@%
  #2\box\z@}}}
\gdef\leftnote#1{\n@te{\hss#1\quad}{}}
\gdef\rightnote#1{\n@te{\quad\kern-\leftskip#1\hss}{\moveright\hsize}}
\gdef\?{\FN@\qumark}
\gdef\qumark{\ifx\next"\DN@"##1"{\leftnote{\rm##1}}\else
 \DN@{\leftnote{\rm??}}\fi{\rm??}\next@}}
%

%%%%%%%%%%%%%%%%%%%%%%%%%%%%%%%%%%%%%%%%%%%%%%%%%%%%%%%%%%%%%%%%%%%%%%%%%%%%%%%%%%%%%%%%%%%%%%%

%%%%%%%%%%%%%%%%%%%%%%%%%%%%%%%%%%%%%%%%%%%%%%%%
%%\renewcommand{\baselinestretch}{1.1}
%%   \renewcommand{\footnoterule}{}
%%%%%%%%%%%%%%%%%%%%%%%%%%%%%%%%%%%%%%%%%%%%%%%%

%%%%%%%%%%%%%%%%%Cyrillic characters

\DeclareOption{loadcyr}{\cyr@true}

\DeclareFontFamily{OT1}{wncyr}{\hyphenchar\font45 }
\DeclareFontShape{OT1}{wncyr}{m}{n}{%
   <5> <6> <7> <8> <9> gen * wncyr
   <10> <10.95> <12> <14.4> <17.28> <20.74>  <24.88>wncyr10}{}
\DeclareFontShape{OT1}{wncyr}{m}{it}{%
   <5> <6> <7> <8> <9> gen * wncyi
   <10> <10.95> <12> <14.4> <17.28> <20.74> <24.88> wncyi10}{}
\DeclareFontShape{OT1}{wncyr}{m}{sc}{%
   <5> <6> <7> <8> <9> <10> <10.95> <12> <14.4>
   <17.28> <20.74> <24.88>wncysc10}{}
\DeclareFontShape{OT1}{wncyr}{b}{n}{%
   <5> <6> <7> <8> <9> gen * wncyb
   <10> <10.95> <12> <14.4> <17.28> <20.74> <24.88>wncyb10}{}
\input cyracc.def
\def\rus{\usefont{OT1}{wncyr}{m}{n}\cyracc\fontsize{8}{10pt}\selectfont}

\DeclareMathSizes{9}{9}{7}{5} % This only is different.

%%%%%%%%%%%%%%%%%%%%%%%%%%%%%%

%%\swapnumbers

\theoremstyle{plain}

\newtheorem{theorem}{Theorem}%%[section]

\newtheorem{lemma}{Lemma}
\newtheorem{remark}{\it Remark}
\newtheorem{corollary}{Corollary}

 % This creates the counter "sub"

\theoremstyle{definition}

\newtheorem{definition}{Definition}

\newtheorem{nothing*}[theorem]{}
\newtheorem{subnothing*}[sub]{}
\newtheorem{example}{Example}

\theoremstyle{remark}

%%\newtheorem*{remark}{Remark}

%%\renewcommand{\theequation}{\thesection.\arabic{equation}}

%%%%%%%%%%%%%%%%%%%%%%%%%%%%%%%%%%%%%%%%%%%%%%%%%%%%%%%%%%%%%%%%%%%%

\def\An{{\bf A}^n}

\def\bA2{{\mathbf A}\!^2}

\newcommand{\dss}{\hskip -2mm\rotatebox{68}{\raisebox{-1.8\height}{\mbox{\normalsize -\hskip .1mm-\hskip .1mm-}}}\hskip -.6mm}

\newcommand{\br}{{\rm(}}

\newcommand{\bl}{{\hskip .15mm\rm)}}

%%%%%%%%%%%%%%%%%%%%%

\begin{document}
%%\renewcommand{\baselinestretch}{2}

%%\

%%\vskip -1mm

\title[Some
subgroups of the Cremona groups]{Some
subgroups of
the Cremona groups}

\author[Vladimir  L. Popov]{Vladimir  L. Popov${}^*$}
\address{Steklov Mathematical Institute,
Russian Academy of Sciences, Gubkina 8, Moscow\\
119991, Russia} \email{popovvl@mi.ras.ru}

\thanks{
 ${}^*$\,Supported by
 grants {\rus RFFI
11-01-00185-a}, {\rus N{SH}--5139.2012.1}, and the
program {\it Contemporary Problems of Theoretical
Mathematics} of the Russian Academy of Sciences, Branch
of Mathematics. }

%%\date{August 20, 2009}

%%\subjclass[2000]{14M99, 14L30, 14R20, 14L24, 17B45}

%%\keywords{Semisimple algebraic group, conjugating
%%action, cross-section, quotient, representation ring}

\dedicatory{To M.\;Miyanishi on his $70$th birthday}

\maketitle

%%\renewcommand{\baselinestretch}{1.0}

 %%%%%%%%%%%%%%%%%%%%%%

\begin{abstract}
We explore algebraic subgroups of the Cremona group $\mathcal C_n$ over an algebraically closed field of characteristic zero. First, we consider some
class of algebraic subgroups of $\mathcal C_n$ that we call flattenable.\;It contains all tori.\;Linearizability of
the natural rational actions
of flattenable
subgroups
on $\An$
is
intimately related to
rationality of the invariant fields and, for tori, is equivalent to
it.\;We prove
stable linearizability of these actions  and show  the existence of nonlinearizable
actions among them.\;This is applied to
exploring maximal tori in $\mathcal C_n$ and to proving the existence
of nonlinearizable, but stably linearizable elements of infinite order in
$\mathcal C_n$ for $n\geqslant 5$.\;Then we
consider some subgroups $\mathcal J(x_1,\ldots, x_n)$ of $\mathcal C_n$ that we call the rational de Jonqui\`eres subgroups.\;We prove that every
affine algebraic subgroup
of $\mathcal J(x_1,\ldots, x_n)$ is
solvable and the group of its connected components
is Abelian. We also prove that every reductive algebraic subgroup of $\mathcal J(x_1,\ldots, x_n)$ is diagonalizable. Further, we prove that the natural rational action on $\An$ of any unipotent algebraic subgroup of $\mathcal J(x_1,\ldots, x_n)$ admits a rational cross-section which is an affine subspace of $\An$.\;We show that
in this statement ``unipotent'' cannot be replaced by ``connected solvable''.\;This is applied to proving a conjecture of A.\;Joseph
on the existence of ``rational slices'' for the coadjoint representations of finite-dimensional algebraic Lie algebras $\mathfrak g$ under the assumption that the Levi decomposition of $\mathfrak g$ is a direct product.\;We then consider some overgroup $\widehat{\mathcal J}(x_1,\ldots, x_n)$ of $\mathcal J(x_1,\ldots, x_n)$ and prove that every torus in $\widehat{\mathcal J}(x_1,\ldots, x_n)$ is linearizable.\;Finally,
we prove the existence of an element $g\in \mathcal C_3$ of order $2$
such that $g\notin G$ for every connected affine algebraic subgroup $G$ of
$\mathcal C_\infty$;
in particular, $g$ is not stably linearizable.
\end{abstract}

%%\keywords{Cremona group; linearizability; tori; invariant fields; the rational de Jonqui\`eres subgroups; %%cross-sections.}

%%\bodymatter

\

\vskip 2mm

 \section{Introduction}\label{intr}

The last three decades were marked by growing interest in problems related to  the affine Cremona group
${\mathcal C}_n^{\rm aff}$\;(the group of biregular automorphisms of the affine $n$-dimensional space $\An$).\;Despite
of a remarkable prog\-ress made during these years, some fundamental problems
still remain unsolved. For instance, at the moment the linearization problem
for algebraic tori
is solved only for $n\leqslant 3$
and its difficult solution for $n=3$ is one of the highlights of the theory.

Some of these problems
may be formulated entirely in terms of group-theoretic structure of ${\mathcal C}_n^{\rm aff}$.\;Thereby, they admit
the birational counterparts related to the full Cremona group ${\mathcal C}_n$\;(the group of birational automorphisms of $\An$).\;It is of interest to explore them.\;We
have not seen
  publications purposefully developing this viewpoint.\;A step in this direction is made in this paper.

In Section \ref{flat} we first consider a class of algebraic subgroups of ${\mathcal C}_n$ that we call flattenable.\;Linearizability of their natural rational actions
on $\An$ is intimately related to
rationality of their invariant fields, the
subject of
classical Noether problem.\;All algebraic tori in $\mathcal C_n$
are contained in this class   and, for them,
these two properties,
linearizability and
rationality, are equivalent.\;We show that flattenable groups are special in the sense of Serre (see %%Ref.\;
\cite{Ser58}) and that
every rational locally free action on $\An$ of a special group
 is stably linearizable; in particular, this is so for tori.\;On the other hand, we show that there are
 stably linearizable, but nonlinearizable rational locally free actions on $\An$ of connected affine algebraic  groups, in particular, that of tori.\;We then apply this to
the problem of describing maximal tori in ${\mathcal C}_n$ and show that nowadays one can say more
on it than in the time when Bia{\l}ynicki-Birula and Demazure wrote their papers %%Refs.\;
\cite{Bia66}, \cite{Dem70}.\;Namely, apart from $n$-dimensional maximal tori (that are all conjugate), ${\mathcal C}_n$ for
$n\geqslant 5$ contains maximal tori of dimension
$n-3$ (and does not contain maximal tori of dimensions
 $n-2$, $n-1$ and $> n$).\;This answers a question of Hirschowitz  in
 %%Ref.\;
 \cite[Sect.\;3]{Hir72}.\;As
 another application, we prove the existence
of nonlinearizable, but stably linearizable elements of infinite order in
$\mathcal C_n$ for $n\geqslant 5$.

In Sections \ref{Jon} and \ref{cros}
we consider
a natural counterpart of the classical de Jonqui\`eres subgroups of ${\mathcal C}_n^{\rm aff}$ that we call
the rational de Jonqui\`eres subgroups of $\mathcal C_n$.\;We prove that their affine algebraic subgroups are solvable and have Abelian groups of connected components.\;We also prove that
reductive algebraic subgroups of the rational de Jonqui\`eres subgroups of $\mathcal C_n$ are diagonalizable.\;Then we prove that for the natural rational action on $\An$ of any unipotent algebraic subgroup of a
rational de Jonqui\`eres subgroup of $\mathcal C_n$
there exists  an affine subspace of $\An$ which is
a rational cross-section for this action (recall that
for rational actions of connected solvable affine algebraic subgroups of $\mathcal C_n$ on $\An$, the existence of {\it some}  rational cross-sections, not necessarily affine subspaces of $\An$, is ensured by the general Rosenlicht's theorem, see
%%Ref.\;
\cite[Theorem\;10]{Ros56}).\;We also show that
in this result ``unipotent'' cannot be replaced by ``connected solvable''.\;We then apply this result to a conjecture of A.\;Joseph
%%(Ref.\;
(\cite[Sect.\,7.11]{Jos11})
on the existence of ``rational slices'' for the coadjoint representations of finite-dimensional algebraic Lie algebras $\mathfrak g$ and prove this conjecture under the assumption that the Levi decomposition of $\mathfrak g$ is a direct product.\;Further, we consider a certain natural class of overgroups of  the rational de Jonqui\`eres subgroups and, using the results of Section \ref{flat}, show that the natural action on $\An$ of any subtorus of such an overgroup is linearizable.\;Finally, we prove the existence of an element $g\in \mathcal C_3$ of order $2$ such that $g\notin G$ for every connected affine algebraic subgroup $G$ of the direct limit $\mathcal C_\infty$ of the tower of natural inclusions $\mathcal C_1\hookrightarrow\mathcal C_2\hookrightarrow\cdots$; in particular, $g$ is not stably linearizable.

\vskip 2mm

\noindent {\it Conventions, notation and some generalities}\

\vskip 1mm

\noindent
Below ``variety'' means ``algebraic variety''.\;We assume given an
algebraically closed field $k$ of characteristic zero which
serves
as
domain of definition
for each of the varieties considered below.\;Each variety is identified with its set of points rational over $k$.\;Along with the standard notation and conventions
of
%%Ref.\;
\cite{Bor91} we use the following ones.
\begin{enumerate}[\hskip 5mm ---]
\item  ${\rm Aut}\,X$ is the automorphism group of
   a variety
    $X$.

  \item  ${\rm Bir}\,X$ is the group of
birational automorphisms of an irreducible
    variety\;$X$.

   \item  $X\hskip -.7mm\approx\hskip -.7mm Y$ means that $X$ and $Y$ are birationally isomorphic irreducible varieties.

        \item  If $f$ is a rational function on the product $X\times Y$ of
        varieties and  $x\in X$ is a point such that $f|^{}_{x\times Y}$ is well defined, then $f(x)$ is the element of $k(Y)$ such that $f(x,y)=f(x)(y)$ for every point $(x, y)\in X\times Y$ where $f$ is defined.

   \item
     Given a dominant rational map $\varphi\colon X\dashrightarrow Y$ of
     varieties, $\varphi^*$ is the embedding
     $k(Y)\hookrightarrow k(X)$, $f\mapsto f \circ \varphi$.

      \item  Given an action
\begin{equation}\label{action}
\alpha\colon G\times X\to X
 \end{equation}
 of a group $G$ on a set $X$ and the elements $g\in G$, $x\in X$,  then $\alpha(g, x)\in X$ is denoted by $g\cdot x$. If $H$ is a subgroup of $G$, then $\alpha|_H$ is the restriction of $\alpha$ to $H\times X$.

 \item  $\An\times{\bf A}^m$ is identified with ${\bf A}^{n+m}$ by means of the isomorphism
\begin{gather*}
\An\times{\bf A}^m\to {\bf A}^{n+m},\\[-4pt] ((a_1,\ldots, a_n), (b_1,\ldots, b_m))\mapsto (a_1,\ldots, a_n, b_1,\ldots, b_m).
\end{gather*}

  \item   $K^{\times}$ is the multiplicative  group of a field $K$.

  \item   $K^+$ is the additive group of a field  $K$.

  \item  If $K/F$ is a field extension, then $K$ is called {\it pure} (resp.\;{\it stably pure}) over $F$ if $K$ is generated over $F$ by a finite collection of algebraically independent elements (resp.\;if $K$ is contained in a field that is pure over both $K$ and $F$).

       \item  $G^0$ is the identity component of an algebraic group $G$.

  \item   ``Torus'' means ``affine algebraic torus''.
\end{enumerate}

Let  $G$ be an algebraic group and let $X$ be a variety.

If
 \eqref{action} is a morphism, then $\alpha$ is called a {\it regular action}. In this case, for every element $g\in G$, the map $X\to X$,
$x\mapsto g
\cdot x$, is an automorphism of $X$ and the image of the homomorphism $G\to {\rm Aut}\,X$, $g\mapsto\{x\mapsto g\cdot x\}$ is called an {\it algebraic subgroup of ${\rm Aut}\,X$}. A regular action $\alpha$ is called {\it locally free} if there is a dense open subset $U$ of $X$ such that the $G$-stabilizer of every point of $U$  is trivial.

From now on we assume that $X$ is irreducible.
The map
\begin{equation}\label{BA}
{\rm Bir}\,X\to {\rm Aut}_k\,k(X), \qquad \varphi\mapsto (\varphi^*)^{-1},
  \end{equation}
  is a group isomorphism.\;We always
identify ${\rm Bir}\,X$ and ${\rm Aut}_k\,k(X)$
  by means of
   \eqref{BA} when we
   speak about
    action of a subgroup of ${\rm Bir}\,X$ by $k$-automor\-phisms of $k(X)$ and, conversely, action of a subgroup of
  ${\rm Aut}_kk(X)$ by birational automorphisms of\;$X$.

Let $\theta\colon G\to {\rm Bir}\,X$ be an abstract group homomorphism.\;It determines an action of $G$ on $X$ by birational isomorphisms.
If the domain of definition of the partially defined map $G\times X\to X$, $(g, x)\mapsto \theta(g)(x)$, contains a dense open subset of $G\times X$ and coincides on it with a rational map  $\varrho\colon G\times X\dashrightarrow X$, then this action (and $\varrho$) is called a {\it rational action} of $G$ on $X$ and
$\theta(G)$ is called an {\it algebraic subgroup of ${\,\rm Bir}\,X$}.

There is a method for constructing algebraic subgroups of ${\,\rm Bir}\;X$.\;Na\-me\-ly, let $Y$ be another irreducible variety and let $\gamma\colon Y\dashrightarrow X$ be a birational isomorphism.\;Then ${\rm Bir}\,Y\to {\rm Bir}\,X$, $g\mapsto\gamma\circ g\circ \gamma^{-1}\!\!,$\;is a group isomorphism and the image of any algebraic subgroup of ${\rm Aut}\,Y$ under it is an algebraic subgroup of ${\rm Bir}\,X$.\;In fact, by
%%Ref.\;
\cite[Theorem\;1]{Ros56}, this method is universal, i.e.,\;every algebraic subgroup of ${\rm Bir}\,X$ is obtained in this manner for the appropriate $Y$ and $\gamma$.\;In other words, for every rational action of $G$ on $X$ there is a regular action of $G$ on an irreducible variety $Y$, the open subsets $X_0$ and $Y_0$ of resp.\;$X$ and $Y$, and an isomorphism $Y_0\to X_0$ such that the induced field isomorphism
$k(X)=k(X_0)\to k(Y_0)=k(Y)$ is $G$-equivariant. If the action of $G$ on $Y$ is locally free, then the rational action $\varrho$ is called {\it locally free}.

Let $\varrho\colon G\times X\dashrightarrow X$ be a rational action of $G$ on $X$ and let $f$ be an element of $k(X)$.\;Then
$\{g\cdot f\mid g\in G\}$ is an ``algebraic family''
of rational functions on $X$ in the following sense: there is a rational function $\widehat f\in k(G\times X)$
such that $g\cdot f={\widehat f}(g)$
for every $g\in G$.\;Indeed, $\varrho^*(f)\in k(G\times X)$ and $\varrho^*(f)(g, x)=(g^{-1}\cdot f )(x)$
for every point $(g, x)\in G\times X$ where $\varrho^*(f)$ is defined; whence the claim.

If $X$ and $Y$ are irreducible varieties endowed with rational actions of $G$ such that there is a $G$-equivariant birational isomorphism $X\dashrightarrow Y$, then we write
\begin{equation*}
X \overset{G}{\approx} Y.
\end{equation*}

In order  to avoid a confusion, in some cases when several rational actions are simultaneously considered, we denote
 $X$ endowed with a rational action\;$\varrho$ of $G$ by
 \begin{equation*}
 {}_\varrho X.
  \end{equation*}
If $Y$ is another variety, then $X\times Y$ endowed with the rational action of $G$ via the first factor by means of $\varrho$ is denoted by $$_\varrho X\times Y.$$ We denote by
 \begin{equation*}
 _\lambda G
 \end{equation*}
the underlying variety of $G$ endowed with the action of $G$ by left translations.

If $\varrho$ is a rational action  of $G$ on $X$,  then
\begin{equation*}
\pi^{}_{G, X}\colon X\dashrightarrow X\dss G
\end{equation*}
is a rational quotient of $\varrho$, i.e.,  $X\dss G$ and $\pi^{}_{G, X}$ are resp. a variety and a
dominant rational map such that $\pi^{*}_{G, X}(k(X\dss G))=k(X)^G$ (see
%%Ref.\;
\cite[Sect. 2.4]{PV94}).
Depending on the situation we
choose $X\dss G$ as a suitable variety within the class of birationally isomorphic ones.\;A {\it rational section} (resp., {\it cross-section}) for $\varrho$
is a rational map $\sigma\colon X\dss G\dashrightarrow X$ such that $\pi^{}_{G, X}\circ\sigma={\rm id}$ (resp., a subvariety $S$ of $X$ such that
$\pi^{}_{G, X}|^{}_S\colon S\dashrightarrow X\dss G$ is a birational isomorphism). The closure of the image of a rational section is a rational cross-section and,
since ${\rm char}\,k=0$, the closure of every cross-section is obtained in this manner.

The group
\begin{equation*}
{\mathcal C}_n:={\rm Aut}_k\,k(\An)
 \end{equation*}
 is called the {\it Cremona group of rank} $n$ (over $k$).\;It is endowed with a topology,
 %%on ${\mathcal C}_n$,
 the {\it Zariski topology of} $\mathcal C_n$,  in which families of
 elements of $\mathcal C_n$ ``algebraically paramet\-rized'' by algebraic varieties are closed,
 %%\;Ref.\;
 \cite[Sect.\;1.6]{Ser08}.\;For every algebraic subgroup $G$ of $\mathcal C_n$ and its subset $S$, the closure of $S$ in $\mathcal C_n$ coincides with the closure of $S$ in $G$ in the Zariski topology of $G$. In particular, $G$ is closed in $\mathcal C_n$.\;Left and right translations of $\mathcal C_n$ are homeomorphisms.

 We denote by  $x_1,\ldots, x_n \in k[\An]$ the {\it standard coordinate functions} on\;$\An$:
\begin{equation}\label{coord}
x_i((a_1,\ldots, a_n))=a_i.
\end{equation}
They are algebraically independent over $k$ and  $k(\An)=k(x_1,\ldots, x_n)$.\;For eve\-ry $n\geqslant 2$, we identify ${\bf A}^{n-1}$ with the
image of the embedding
${\bf A}^{n-1}\hookrightarrow \An$, $(a_1,\ldots, a_{n-1})\mapsto (a_1,\ldots, a_{n-1},0)$, and  denote the restriction $x_i|^{}_{{\bf A}^{n-1}}$ for $i=1,\ldots, n-1$ still by $x_i$. Correspondingly,
we have the embedding $\mathcal C_{n-1}\hookrightarrow {\mathcal C}_{n}$, $g\mapsto \widehat g$, where
$\widehat g\cdot x_i:=g\cdot x_i$ if $i=1,\dots, n-1$ and $\widehat g\cdot x_{n}:=x_{n}$.\;The direct limit  for the tower of these embeddings
$\mathcal C_1  \hookrightarrow\mathcal C_2\hookrightarrow\cdots\hookrightarrow\mathcal C_n\hookrightarrow\cdots$
is the {\it Cremona group $\,\mathcal C_\infty$ of infinite rank}.\;We identify every $\mathcal C_n$ with the subgroup of $\mathcal C_\infty $ by means of
the natural embedding
$\mathcal C_n\hookrightarrow\mathcal C_\infty$.
A subgroup $G$ of ${\mathcal C}_\infty$ is called {\it algebraic} if there exists an integer $n>0$ such that $G$ is an algebraic subgroup of
${\mathcal C}_n$.

We distinguish the following two algebraic subgroups of ${\mathcal C}_n$:
\begin{align*}
{\rm GL}_n&:=\{g\in \mathcal C_n\mid g\cdot x_i=\textstyle\sum_{j=1}^{n} \alpha_{ij}x_j,\;\;\alpha_{ij}\in k\},\\[-1pt]
{D}_n&:=\{g\in \mathcal C_n\mid g\cdot x_i=\alpha_ix_i,\;\;\alpha_i\in k\};
\end{align*}
$D_n$ is the maximal torus in ${\rm GL}_n$.

Let $g$ be an element and let $G$ be a subgroup of $\mathcal C_n$.\;If $g\in {\rm GL}_n$ (resp.\,$G\subseteq {\rm GL}_n$), then $g$ (resp.\;$G$) is called
a {\it linear element} (resp.\;a {\it linear subgroup}).\;If $g$ (resp.\;$G$) is conjugate to a linear element
(resp.\;a linear subgroup), then it is called a {\it linearizable element} (resp.\;a {\it linearizable subgroup}).\;If $g$ (resp.\;$G$) is a  linearizable element (resp.\;a linearizable
subgroup) of some $\mathcal C_m$ for $m\geqslant n$, then it is called a {\it stably linearizable element} (resp.\;a {\it stably linearizable subgroup}).\;A rational action $\varrho$ of an algebraic group $H$ on $\An$ is called
resp.\;a {\it linear}, {\it linearizable} or {\it stably linearizable  action}  if the image of $H$ in $\mathcal C_n$ corresponding to\,$\varrho$ is resp.\;a linear, linearizable or stably linearizable
subgroup of $\mathcal C_n$.

\vskip 2mm

{\it Acknowledgment.} I am grateful to {\sc Ming-chang Kang} who drew my attention to paper \cite{SB04}.

%%{\it Acknowledgements.}  I am grateful to the referees for
%%correcting typos.
%%careful reading  of the text.

\section{Flattening, linearizability, tori}\label{flat}

\begin{definition}\label{emb} An affine algebraic group $G$ is called\;{\it flattenable}
if the underlying variety of $G$ endowed with the action of $G$ by left translations admits
 an equivariant open embedding into some $\An$ endowed with a rational linear
 action of $G$.\;The  $G$-module $\An$ is then called a {\it flattening} of\;$G$.
\end{definition}

Every
flattenable group is connected.

%%In the notation of Definition \ref{emb}, we have
%%\begin{equation}\label{leg}
%%G_\lambda \overset{G}{\approx} {\bf A}^n_\beta.
%%\end{equation}

\begin{example}\label{to} $\An$ endowed with the natural action of $D_n$,
 \begin{equation}\label{diago}
{\rm diag} (\varepsilon_1,\ldots, \varepsilon_n)\cdot (a_1,\ldots,a_n):=(\varepsilon_1a_1,\ldots,
 \varepsilon_na_n),
 \end{equation}
 is a flattening of $D_n$.\;Hence, every
 %%linear algebraic
 torus is flattenable.
\end{example}

\begin{example}\label{gl}
%%Identify ${\bf A}_{n^2}$  and ${\rm Mat}_{n\times n}$, as vector spaces over $k$, a by %%a fixed isomorphism.
The underlying vector space of  the algebra ${\rm Mat}_{n\times n}$ of all $(n\times n)$-matrices with entries in $k$ endowed with the action of ${\rm GL}_n$ by left multiplications,
$g\cdot a:=ga$, $g\in {\rm GL}_n$, $a\in {\rm Mat}_{n\times n}$, is a flattening of ${\rm GL}_n$. Hence, ${\rm GL}_n$ is flat\-ten\-able.
\end{example}

\begin{example}  Let  $G_1,\ldots, G_s$ be affine algebraic groups and let $G:=G_1\times\cdots\times G_s$. If ${\bf A}_{n_i}$ endowed with an action of $G_i$ is a flattening of $G_i$, then ${\bf A}_{n_1}\!\times\cdots\times{\bf A}_{n_s}$ endowed with the natural action of $G$
is a flattening of $G$.\;Hence, $G$ is flattenable if every
$G_i$ is.
\end{example}

\begin{example} Consider a finite-dimensional associative (not necessarily commutative) $k$-algebra $A$  with an identity element. %%$1$.
The group
%%$$G:=A^*$$
of all invertible elements of $A$ is then a connected affine algebraic group $G$ whose underlying variety is an open subset of that of $A$. The action of $G$ on $A$ by left multiplications is linear
and the identity map is an equivariant embedding of $G$ into $A$.\;Thus, $A$ is a flattening of $G$ and  $G$ is flattenable. If $A$ is the product of $n$ copies of the $k$-algebra $k$, we obtain Example \ref{to}. Taking $A={\rm Mat}_{n\times n}$, we obtain Example \ref{gl}.
\end{example}

In general, flattening of $G$ is not unique.

\begin{example}\label{flun} ${\rm Mat}_{n\times n}$ endowed with the action of ${\rm GL}_n$ given by
$g\cdot a:=(g^t)^{-1}a$, $g\in {\rm GL}_n$, $a\in {\rm Mat}_{n\times n}$, where $g^t$ is the transpose of
$g$, is a flattening of ${\rm GL}_n$. It is not isomorphic to that
%%the flattening of ${\rm GL}_n$
from Example \ref{to} (as the highest weights of these two flattenings are not equal).
\end{example}

\begin{lemma}\label{cd1} If the underlying variety of a connected affine reductive algebraic group $G\neq \{e\}$ is isomorphic to an open subset $\,U$\;of $\An$,
 %%for a positive $n$,
 then $U\neq \An$ and
 %%the dimension of
 the center of $\,G$
is at least one-dimensional.
\end{lemma}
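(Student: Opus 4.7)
My plan is to prove the two conclusions in sequence, with Rosenlicht's identification $X(G)\cong k[G]^\times/k^\times$ of the character group of a connected algebraic group as the central algebraic tool.

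I would first show $U\neq\An$. Suppose otherwise, so the underlying variety of $G$ is $\An$. Then $k[G]=k[x_1,\ldots,x_n]$ has only scalar units, forcing $X(G)=0$. Since $X(G)$ is free abelian of rank $\dim Z(G)^0$ for $G$ connected reductive, $G$ would have to be semisimple; as $G\neq\{e\}$, it would be a positive-dimensional semisimple group isomorphic to $\An$ as a variety. This is ruled out by passing to $\mathbb{C}$ via the Lefschetz principle: the polar decomposition identifies the homotopy type of $G(\mathbb{C})$ with that of its maximal compact subgroup, a positive-dimensional compact semisimple Lie group whose cohomology $H^3$ is nonzero (generated by the Killing form), in contradiction with the contractibility of $\mathbb{C}^n$.

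With $U\neq\An$ established, I would exhibit a non-constant unit on $G$. Since $G$ is affine, so is $U$; and an affine open subset of $\An$ whose complement had codimension $\geq 2$ would satisfy $k[U]=k[\An]$ by normality, forcing $U=\An$. Hence $Z:=\An\setminus U$ has pure codimension one; taking any irreducible component $D$ of $Z$ with defining polynomial $f\in k[x_1,\ldots,x_n]$, the function $f$ is a nonvanishing, non-constant unit in $k[U]=k[G]$. Rosenlicht then gives $X(G)\neq 0$, and since $X(G)$ has rank $\dim Z(G)^0$, we conclude $\dim Z(G)\geq 1$.

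The substantive obstacle is the first step's non-contractibility of a positive-dimensional semisimple complex Lie group; the remainder is formal manipulation with divisors, units, and Rosenlicht's theorem. One can replace the topology by a purely algebraic substitute, namely the non-vanishing of $H^3_{\mathrm{dR}}(G/k)$ for semisimple $G$, which follows from the Chevalley--Eilenberg complex.
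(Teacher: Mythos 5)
Your proof is correct, and its second half (existence of a codimension-one component of $\An\setminus U$, hence a non-constant unit on $U\cong G$, hence a nontrivial character by Rosenlicht, hence $\dim Z(G)\geqslant 1$ since $X(G)$ has rank $\dim Z(G)^0$ for connected reductive $G$) is essentially the argument of the paper. Where you genuinely diverge is the first assertion $U\neq\An$: the paper gets it in one line from Bia{\l}ynicki-Birula's theorem that any regular torus action on $\An$ has a fixed point, applied to a positive-dimensional torus $T\subseteq G$ acting on $G$ by left translations (which is fixed-point free); you instead first use the unit/character argument to reduce to the case of a nontrivial semisimple $G$ with underlying variety $\An$, and then rule that out topologically via the Lefschetz principle, the homotopy equivalence $G(\mathbf C)\simeq K$ with a maximal compact subgroup, and ${\rm H}^3(K,\mathbf R)\neq 0$ (Cartan $3$-form) against contractibility of $\mathbf C^n$. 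Both routes rest on one cited external input (the fixed-point theorem versus the topology of compact semisimple groups plus transfer to $\mathbf C$); the paper's is shorter and stays algebraic over $k$, while yours trades that for a perhaps more familiar topological obstruction and makes explicit that the critical case is the semisimple one. Two small points: your justification only shows that $\An\setminus U$ has \emph{some} component of codimension one (which is all you use and all you should claim --- purity requires the standard result on complements of affine opens, the lemma of \cite{Pop72} cited in the paper); and the parenthetical claim that ${\rm H}^3_{\rm dR}(G/k)\neq 0$ ``follows from the Chevalley--Eilenberg complex'' is glib, since identifying Lie algebra cohomology with algebraic de Rham cohomology of a reductive group is itself a theorem --- but as this is offered only as an optional substitute for the topological step, it does not affect the validity of the proof.
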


\begin{proof} As $G\neq\{e\}$ is reductive,
it
%%and $\dim G=n$ is positive, $G$
contains a torus $T$
of positive dimension.\;For the action of $\,T$ on $G$ by left translations, the fixed point set is empty.\;But  for any regular action of $T$ on $\An$, the fixed point set is nonempty, see
%%\;Ref.\;
\cite[Theorem\;1]{Bia66}. Hence, $U\neq \An$.

Since $D:=\An\setminus U\neq \varnothing$ and $U$ is affine, the dimension of every irreducible component of $D$ is $n-1$, see
%%\;Ref.\;
\cite[Lemma\;3]{Pop72}. Since ${\rm Pic}\,\An=0$,  this entails that $D$ is the zero set of some regular function $f$ on $\An$.\;Therefore, $f|^{}_{U}$ is a nonconstant invertible regular function on $U$.
By
%%Ref.\;
\cite[Theorem 3]{Ros611}, every such function is, up to a scalar multiple, a character of $G$. So
there is a nontrivial character of $G$.\;On the other hand, as $G$ is a connected reductive group,
 $G=G'\cdot C$ where $G'$ is the derived group of $G$ (it is semisimple), $C$ is the connected component  of the identity in the center of $G$ (it is a torus), and $G'\cap C$ is finite, see %%Ref.\;
 \cite[Sect.\;14.2]{Bor91}.\;This entails that
 the character group of $G$ is a free abelian group of rank $\dim C$.\;Hence, $\dim\,C\geqslant 1$.
\quad $\square$ \renewcommand{\qed}{}\end{proof}

\begin{corollary} {\it  There are no nontrivial semisimple flattenable groups.
}
\end{corollary}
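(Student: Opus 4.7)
The plan is to deduce this corollary as a direct consequence of Lemma \ref{cd1}. Suppose for contradiction that $G$ is a nontrivial semisimple flattenable group. By Definition \ref{emb}, the underlying variety of $G$ admits an equivariant open embedding into some $\An$, so in particular it is isomorphic to an open subset of $\An$. Moreover, flattenable groups are connected (as was noted right after Definition \ref{emb}), and $G$ is reductive because it is semisimple.

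Thus all hypotheses of Lemma \ref{cd1} are satisfied. The lemma therefore yields that the center of $G$ has dimension at least one. On the other hand, a semisimple algebraic group has finite center by the structure theory (see \cite[Sect.\;14.2]{Bor91}), so the connected component of the identity in the center of $G$ is trivial, contradicting $\dim C\geqslant 1$.

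There is essentially no obstacle here, since Lemma \ref{cd1} does all the real work. The only point worth checking is that semisimplicity is used in two independent ways: first to guarantee reductivity (so that Lemma \ref{cd1} applies), and second to rule out a positive-dimensional central torus (which produces the contradiction). Both are standard consequences of the definition of semisimple, so the corollary follows immediately.
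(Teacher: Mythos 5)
Your argument is exactly the one the paper intends: the corollary is stated as an immediate consequence of Lemma \ref{cd1}, applied to the open embedding of the underlying variety of a flattenable group into $\An$ furnished by Definition \ref{emb}, with the contradiction coming from the finiteness of the center of a semisimple group. The proposal is correct and matches the paper's approach.
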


Recall from
%%Ref.\;
\cite{Ser58} that an algebraic group $G$ is called {\it special} if
 every principal homogeneous space under $G$ over every field $K$ containing $k$ is trivial. By %%Ref.\;
 \cite{Ser58} special group is automatically connected and affine.\;Special groups are classified:\;a
   connected affine algebraic group $G$ is special if and only if a maximal connected semisimple subgroup of $G$ is isomorphic to
\begin{equation}\label{special}
{\rm SL}_{n_1}\times\cdots\times {\rm SL}_{n_r}\times{\rm Sp}_{m_1}\times\cdots\times {\rm Sp}_{m_s}
\end{equation}
for some integers $r\geqslant 0, s\geqslant 0, n_i, m_j$
(by
%%Ref.\;
\cite{Ser58} such groups are special, and by
%%Ref.\;
\cite{Gro58} only these are).

\begin{lemma}\label{spec} Every flattenable group  $G$ is special.
\end{lemma}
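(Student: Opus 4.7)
The plan is to verify the definition of speciality directly: given a principal homogeneous space $P$ under $G$ over a field $K\supseteq k$, we must exhibit a $K$-rational point of $P$. Fix a flattening of $G$, i.e.\ an equivariant open embedding $\iota\colon G\hookrightarrow V$, where $V=\mathbf{A}^n$ carries a linear representation $\rho$ of $G$ and $G$ acts on itself by left translation. The strategy is to twist $\iota$ by $P$.

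Form the contracted products $V_P:=P\times^G V$ and $G_P:=P\times^G G$, where $G$ acts on the second factor via $\rho$ and by left translation respectively. First, because $V$ is a linear $G$-module of dimension $n$, the twist $V_P$ is the vector bundle over $\mathrm{Spec}\,K$ associated to $P$ and $\rho$; every vector bundle over a field is trivial, so $V_P\cong\mathbf{A}^n_K$. Second, the multiplication map $P\times G\to P$, $(p,g)\mapsto pg$, is invariant under the diagonal $G$-action (the torsor action on $P$ and left translation on $G$), and it descends to a canonical $K$-isomorphism $G_P\xrightarrow{\sim}P$.

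Since $\iota$ is $G$-equivariant, it twists to a morphism $\iota_P\colon G_P\to V_P$; because open immersions are stable under faithfully flat descent, $\iota_P$ is again an open immersion. Combining it with the two identifications above yields an open immersion $P\hookrightarrow\mathbf{A}^n_K$. Now $K\supseteq k$ and $\mathrm{char}\,k=0$, so $K$ is infinite; hence any nonempty open subvariety of $\mathbf{A}^n_K$ contains a $K$-rational point, since its complement lies in the zero locus of some nonzero $f\in K[x_1,\ldots,x_n]$, which cannot vanish identically on $K^n$. Therefore $P(K)\neq\varnothing$, so $P$ is trivial and $G$ is special.

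The main point requiring care is to make the twisting construction rigorous in the scheme-theoretic setting: the representability of $V_P$ and $G_P$ as $K$-schemes with the stated structure, and the preservation of the open-immersion property under twisting, all rest on faithfully flat descent applied to a trivializing cover of $P$. A more elementary alternative is to choose a finite Galois extension $K'/K$ that trivializes $P$, identify all objects over $K'$ (where $P$ becomes $G_{K'}$ and $\iota$ becomes $\iota_{K'}\colon G_{K'}\hookrightarrow V_{K'}$), and then descend via the Galois action on $V_{K'}\cong\mathbf{A}^n_{K'}$; this essentially recovers Serre's original argument.
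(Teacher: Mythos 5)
Your proof is correct, but it follows a genuinely different route from the paper's. The paper's argument is a two-line reduction to quoted machinery: by Definition \ref{emb} the flattening action $\alpha$ of $G$ on $\An$ is locally free, and by Popov's criterion (\cite[Theorem\;1.4.3]{Pop94}) speciality of $G$ is equivalent to the existence of a rational section of the rational quotient $\pi^{}_{G,{}_\alpha\An}$; since $G$ sits in $\An$ as a dense open orbit, ${}_\alpha\An\dss G$ is a single point and such a section exists trivially. You instead verify Serre's definition directly by twisting: given a principal homogeneous space $P$ under $G$ over $K\supseteq k$, you twist the equivariant open immersion $G\hookrightarrow V=\An$ by $P$, identify $P\times^G G\cong P$ and $P\times^G V\cong \mathbf A^n_K$ (the twisted form of a linear representation is a $K$-vector space, by Hilbert 90/Galois descent of vector spaces), note that open immersions descend, and conclude $P(K)\neq\varnothing$ because a nonempty open subscheme of $\mathbf A^n_K$ has a $K$-point when $K$ is infinite. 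Your descent steps are legitimate: $G$ and $V$ are affine, so the contracted products exist and effectivity of (Galois) descent is not an issue, as you note. What each approach buys: the paper's proof is shorter and stays inside the rational-quotient/rational-section formalism that the rest of Section \ref{flat} relies on (the same torsor triviality is then exploited in Theorem \ref{lle}), at the cost of invoking \cite{Pop94}; your proof is self-contained modulo standard descent, makes the mechanism transparent (it is exactly the classical argument showing ${\rm GL}_n$ and ${\rm Sp}_{2n}$ are special), and in effect reproves the relevant direction of the cited criterion in this special case.
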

\begin{proof} Let $\An$ endowed with a rational linear action $\alpha$ of $G$ be a flattening of $G$ and let  $\pi^{}_{G, {}_\alpha {\An}}\colon
{}_\alpha \An\dashrightarrow {}_\alpha {\An}\dss G$ be a rational quotient for this action.\;By Definition \ref{emb}, $\alpha$
is locally free.  Hence, by
%%Ref.\;
\cite[Theorem\;1.4.3]{Pop94}, proving that $G$ is special is equivalent to proving that $\pi^{}_{G, {}_\alpha {\An}}$
 admits
a rational section.\;But
the existence of such a rational section is clear because
Definition \ref{emb} entails that ${}_\alpha\An\dss G$ is a single point.
\quad $\square$ \renewcommand{\qed}{}\end{proof}

By Lemma \ref{spec} a maximal connected semisimple subgroup of every
flattenable group is isomorphic to a group of type \eqref{special}.\;Hence, every {\it reductive}
flattenable group is a quotient $(T\times S)/C$ where $T$ is a torus, $S$ is a group of type \eqref{special} and $C$ is a finite central subgroup.

\vskip 2mm

\noindent
{\bf Conjecture.} {\it The following properties of a connected {\it reductive} algebraic group $G$
are equivalent}:
\begin{enumerate}[\hskip 3mm\rm(i)]
\item {\it $G$ is flattenable};
\item {\it $G$ is isomorphic to $%%T\times
{\rm GL}_{n_1}\times\cdots\times {\rm GL}_{n_r}$.}%% where $T$ is a torus}.
\end{enumerate}

\begin{theorem} \label{lle}
Let $\alpha$ be
a locally free rational action of a flattenable group $\,G$  on\;${\bf A}^m$. If
the invariant field $\,k({}_{\alpha}{\bf A}^m)^G$ is pure over $k$, then $\alpha$ is linearizable.
\end{theorem}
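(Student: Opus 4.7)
Write $d=\dim G$. The plan is to construct a $G$-equivariant birational isomorphism between ${}_\alpha\mathbf{A}^m$ and $\mathbf{A}^m$ endowed with a suitable linear $G$-action, which is exactly what linearizability of $\alpha$ amounts to. The proof will assemble three ingredients: (a) a rational section for the rational quotient of $\alpha$, obtained from Lemma~\ref{spec}; (b) the flattening of $G$ itself; (c) the purity hypothesis on the invariant field.

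First I would use Lemma~\ref{spec} to conclude that $G$ is special. The characterization quoted in the proof of that lemma (Popov, \cite[Thm.~1.4.3]{Pop94}) says that specialness of $G$ is equivalent to the property that the rational quotient morphism of \emph{every} locally free rational action of $G$ admits a rational section. Applying this to $\alpha$, the map $\pi^{}_{G,\mathbf{A}^m}\colon {}_\alpha\mathbf{A}^m\dashrightarrow \mathbf{A}^m\dss G$ admits a rational section, and in the standard way this yields a $G$-equivariant birational isomorphism
\begin{equation*}
{}_\alpha \mathbf{A}^m \;\overset{G}{\approx}\; {}_\lambda G \times (\mathbf{A}^m\dss G),
\end{equation*}
where the $G$-action on the right-hand side is via the first factor only.

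Next I would invoke the flattening: by Definition~\ref{emb} there is a $G$-equivariant open embedding ${}_\lambda G\hookrightarrow {}_\beta \mathbf{A}^d$, where $\beta$ is a rational linear action of $G$ (and in particular $\dim G = d$ forces the flattening to have dimension exactly $d$, since the embedding is open). This embedding is a birational isomorphism, so ${}_\lambda G\overset{G}{\approx}{}_\beta\mathbf{A}^d$. On the other hand, by purity of the invariant field, $k(\mathbf{A}^m\dss G)=k({}_\alpha\mathbf{A}^m)^G$ is pure over $k$, and since $\alpha$ is locally free its transcendence degree is $m-d$; hence $\mathbf{A}^m\dss G\approx \mathbf{A}^{m-d}$. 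Combining, I obtain
\begin{equation*}
{}_\alpha\mathbf{A}^m \;\overset{G}{\approx}\; {}_\beta \mathbf{A}^d\times \mathbf{A}^{m-d},
\end{equation*}
where $G$ acts linearly on $\mathbf{A}^d$ via $\beta$ and trivially on $\mathbf{A}^{m-d}$. Since $\mathbf{A}^d\times\mathbf{A}^{m-d}$ is identified with $\mathbf{A}^m$ and the combined action is linear, the image of $G$ in $\mathcal{C}_m$ determined by $\alpha$ is conjugate to a linear subgroup, i.e.\ $\alpha$ is linearizable.

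I do not expect any serious obstacle here: the proof is essentially an assembly of the three ingredients above. The only point that requires a moment of care is dimensional bookkeeping—that the flattening of $G$ has dimension exactly $d=\dim G$ (forced by openness of the embedding), and that the local freeness of $\alpha$ makes the transcendence degree of the invariant field exactly $m-d$, so that the two pieces $\mathbf{A}^d$ and $\mathbf{A}^{m-d}$ add up to $\mathbf{A}^m$ rather than some larger affine space. With this check the proof closes.
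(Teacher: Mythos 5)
Your proof is correct and follows essentially the same route as the paper: split ${}_\alpha{\bf A}^m$ $G$-equivariantly as ${}_\lambda G\times({}_\alpha{\bf A}^m\dss G)$ (the paper's \eqref{mG}), replace ${}_\lambda G$ by the flattening ${}_\beta{\bf A}^d$ via Definition~\ref{emb}, and use purity together with the dimension count to identify the quotient with ${\bf A}^{m-d}$. The only difference is how \eqref{mG} is justified: you invoke the rational-section characterization of special groups (the same reference used in the proof of Lemma~\ref{spec}) and the standard fact that a rational section of the quotient of a locally free action gives a $G$-equivariant birational trivialization, whereas the paper passes to a regular model whose geometric quotient is a torsor and uses Zariski local triviality of torsors under special groups\,---\,two formulations of the same fact.
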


\begin{proof}
Consider  for $\alpha$ a rational quotient,
\begin{equation}\label{quoo}
\pi^{}_{G, {}_\alpha {\bf A}^m}\colon
{}_\alpha {\bf A}^m
\dashrightarrow
{_\alpha {\bf A}^m}\dss G.
\end{equation}
  As explained in Introduction, there is a variety $X$ endowed with a regular locally free action $\alpha'$ of $G$ such that ${}_{\alpha'}X\overset{G}{\approx}{}_\alpha{\bf A}^m$.\;By %%Ref.\;
  \cite[Theorem 2.13]{CTKPR11}, shrinking $X$ if necessary, we may assume that the geometric quotient
  \begin{equation*}\label{torsor}
  %%\pi_{G, {}_{\alpha'}X}\colon
  {}_{\alpha'}X\to {}_{\alpha'}X/G
  \end{equation*} for
  $\alpha'$ exists and
  %%\eqref{torsor}
  is  a torsor over ${}_{\alpha'}X/G$.\;As $G$ is special  by Lemma \ref{spec},
  this torsor is locally trivial in Zariski topology.\:Hence,
  %%This entails that
   ${}_{\alpha'}X \overset{G}{\approx}{_\lambda}G\times ({}_{\alpha'}X/G)$
   and therefore,
   %%; whence
%%As $\alpha$ is locally free and $G$ is special  by Lemma \ref{spec},
%%$\pi$ admits
%%there is a rational section for $\alpha$. \mnote{why is $\pi$ generically a principle fibration? Give a %%reference.} This yields
\begin{equation}\label{mG}
_\alpha{\bf A}^m\overset{G}{\approx}{%%_\lambda
{_\lambda}G}\times
({_\alpha}{\bf A}^m\dss G).
\end{equation}

Let
$_\beta{\bf A}^n$
be a flattening of $G$. Definition \ref{emb} yields
%%that
\begin{equation}\label{leg}
%%_\lambda
_\lambda G \overset{G}{\approx}  {_\beta}\An.
\end{equation}
From \eqref{mG} and \eqref{leg} we obtain
%%that
\begin{equation}\label{aaa}
_\alpha{\bf A}^m \overset{G}{\approx} {_\beta}{\bf A}^n\times ({_\alpha}{\bf A}^m\dss G).
\end{equation}
The assumption of purity
%%of $k({\bf A}^m_\alpha)^G$ over $k$
and \eqref{aaa} yield
%%that
\begin{equation}\label{ra}
_\alpha{\bf A}^m\dss G\approx {\bf A}^{m-n}.
\end{equation}

Consider the action $\gamma$ of $G$ on ${\bf A}^m$ defined by
\begin{equation}\label{gamma}
_\gamma{\bf A}^m:={_\beta}{\bf A}^n\times {\bf A}^{m-n}.
\end{equation}
From \eqref{aaa}, \eqref{ra}, and
\eqref{gamma} we deduce that
\begin{equation}\label{alga}
_\alpha{\bf A}^m \overset{G}{\approx} {_\gamma}{\bf A}^m.
\end{equation}
But $\gamma$ is linear because $\beta$ is. This and \eqref{alga} complete the proof.
\quad $\square$ \renewcommand{\qed}{}\end{proof}

\begin{lemma}\label{lf}
For every
affine algebraic group $G$ and
every integer $r$ there exists a rational locally free linear action of $\,G$ on ${\bf A}^s$
for some $s>r$.
\end{lemma}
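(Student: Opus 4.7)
The plan is to build the desired action directly from a faithful finite-dimensional representation of $G$.\;Since $G$ is an affine algebraic group, it admits (by a standard structure theorem, see \cite{Bor91}) a closed embedding $\rho\colon G\hookrightarrow {\rm GL}_n$ for some integer $n$, which I will use as the core of the construction, essentially imitating Example \ref{gl}.

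First I would take the linear action $\alpha$ of $G$ on ${\rm Mat}_{n\times n}\cong {\bf A}^{n^2}$ given by left multiplication, $g\cdot A:=\rho(g)A$.\;Choosing the standard matrix coordinates $y_{ij}$ on ${\rm Mat}_{n\times n}$, the action sends $y_{ij}$ to $\sum_k \rho(g)_{ik}y_{kj}$, so it is manifestly a regular (hence a fortiori rational) \emph{linear} action of $G$ on an affine space in the sense of the paper.\;It remains to check local freeness and to arrange the size condition $s>r$.

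For local freeness I would observe that on the dense open subset ${\rm GL}_n\subset {\rm Mat}_{n\times n}$ every stabilizer is trivial: if $A\in {\rm GL}_n$ and $\rho(g)A=A$, then $\rho(g)=I$, whence $g=e$ by faithfulness of $\rho$.\;To meet $s>r$ for an arbitrary $r$ I would then extend $\alpha$ by a trivial factor, replacing ${\rm Mat}_{n\times n}$ by ${\rm Mat}_{n\times n}\oplus {\bf A}^{N}\cong {\bf A}^{n^2+N}$ with $G$ acting by left multiplication on the first summand and trivially on the second.\;The resulting action is still linear, and the stabilizer of any pair $(A,v)$ with $A\in {\rm GL}_n$ remains trivial, so local freeness is preserved; choosing $N$ large enough yields $s:=n^2+N>r$.

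There is no serious obstacle here; the only non-trivial input is the existence of a faithful finite-dimensional rational representation of an arbitrary affine algebraic group, a standard fact that is available throughout the paper.\;Everything else is a one-line verification exploiting the openness of ${\rm GL}_n$ inside ${\rm Mat}_{n\times n}$ together with the ability to inflate the ambient dimension by adjoining a trivial summand.
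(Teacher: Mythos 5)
Your proof is correct and follows essentially the same route as the paper: embed $G$ as a closed subgroup of some ${\rm GL}_n$ and restrict the left-multiplication action on ${\rm Mat}_{n\times n}\cong{\bf A}^{n^2}$, which is linear and locally free because the stabilizer of any point of the dense open subset ${\rm GL}_n$ is trivial. The only (harmless) difference is how $s>r$ is arranged: the paper enlarges $n$ via the closed embedding ${\rm GL}_n\hookrightarrow{\rm GL}_{n+1}$, whereas you adjoin a trivial summand ${\bf A}^N$.
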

\begin{proof} By
%%Ref.\;
\cite[Prop.\,1.10]{Bor91}, we may assume that $G$ is a closed subgroup of some ${\rm GL}_n$.\;As there is a closed embedding of ${\rm GL}_n$ in ${\rm GL}_{n+1}$, we may in addition assume that $n^2>r$.\;By Example 2 there is a rational locally free linear
action $\alpha$ of ${\rm GL}_n$ on ${\bf A}\!^{n\hskip -.1mm^2}$.
 Hence,
 $\alpha|^{}_G$ shares the requested pro-\break perties.
\quad $\square$ \renewcommand{\qed}{}\end{proof}

\begin{theorem}\label{specstab} Every rational locally free action $\alpha$ of a special  algebraic group $G$ on
${\bf A}^m$
is stably linearizable.
\end{theorem}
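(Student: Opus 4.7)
The plan is to introduce an auxiliary locally free \emph{linear} $G$-action and then run a cancellation argument so that the trivial extension of $\alpha$ to a higher-dimensional affine space becomes $G$-birational to a linear action. Concretely, by Lemma~\ref{lf} I first pick a rational locally free linear action $\beta$ of $G$ on some ${\bf A}^s$, form the diagonal rational action $\alpha\times\beta$ of $G$ on ${\bf A}^m\times {\bf A}^s\cong {\bf A}^{m+s}$ (which is again locally free), and aim to show that the two trivially-extended actions
\begin{equation*}
{}_\alpha {\bf A}^m\times {\bf A}^s\quad\text{and}\quad {\bf A}^m\times {}_\beta {\bf A}^s
\end{equation*}
(with $G$ acting only on the labelled factor and trivially on the other) are both $G$-birational to $\alpha\times\beta$. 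This will identify $\alpha$, viewed in $\mathcal C_{m+s}$, with the linear action on the right.

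The main step is a no-name cancellation: whenever $\gamma$ is a rational locally free action of the special group $G$ on a variety $Z$ and $\delta$ is any rational $G$-action on a variety $W$, the diagonal action of $G$ on $Z\times W$ is $G$-equivariantly birational to the action that is $\gamma$ on $Z$ and trivial on $W$. I would prove this by repeating the opening moves of the proof of Theorem~\ref{lle}: by \cite[Theorem 2.13]{CTKPR11}, after shrinking $Z$ the geometric quotient of $\gamma$ exists and is a torsor, which by specialness of $G$ is Zariski-locally trivial; this gives ${}_\gamma Z\overset{G}{\approx}{}_\lambda G\times (Z\dss G)$. The twist $(g,w)\mapsto (g,g^{-1}\cdot w)$ then transforms the diagonal action on ${}_\lambda G\times {}_\delta W$ into the left-translation action on ${}_\lambda G\times W$ that is trivial on $W$, and reassembling yields the claim.

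Applying this cancellation twice — once with $(\gamma,\delta)=(\alpha,\beta)$ and once with $(\gamma,\delta)=(\beta,\alpha)$ — produces the chain
\begin{equation*}
{}_\alpha {\bf A}^m\times {\bf A}^s\;\overset{G}{\approx}\;{}_\alpha {\bf A}^m\times {}_\beta {\bf A}^s\;\overset{G}{\approx}\;{\bf A}^m\times {}_\beta {\bf A}^s.
\end{equation*}
The leftmost action corresponds to the image of $\alpha$ under the standard embedding $\mathcal C_m\hookrightarrow \mathcal C_{m+s}$, while the rightmost is linear since $\beta$ is; hence that image is conjugate in $\mathcal C_{m+s}$ to a linear subgroup, which is exactly the assertion that $\alpha$ is stably linearizable.

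There is no serious obstacle here: specialness is used only to produce the rational section that drives the cancellation, Lemma~\ref{lf} supplies the auxiliary linear action for free, and the role of the theorem is to show that the flattenability and purity hypotheses of Theorem~\ref{lle} can be dropped at the price of passing to a Cremona group of higher rank.
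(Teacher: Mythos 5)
Your argument is correct, and it diverges from the paper's proof in an instructive way. Both proofs start from the same two ingredients: the decomposition ${}_\alpha{\bf A}^m\overset{G}{\approx}{}_\lambda G\times({}_\alpha{\bf A}^m\dss G)$ of \eqref{mG}, obtained from \cite[Theorem 2.13]{CTKPR11} plus Zariski-local triviality of torsors under a special group, and the auxiliary locally free linear action supplied by Lemma~\ref{lf}. After that the routes split. The paper invokes Chevalley's theorem \cite{Che54} to write $G\approx{\bf A}^d$, uses this to absorb the (possibly irrational) quotients into affine spaces via \eqref{gag}, and then matches the two sides through the common model ${}_\lambda G\times{\bf A}^s$, which forces the bookkeeping $s\geqslant m$ and ends with the linear model ${}_\gamma{\bf A}^s\times{\bf A}^d$ in $\mathcal C_{s+d}$. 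You instead prove a no-name cancellation (locally free action of a special group times any rational action is $G$-birational to the same product with the second factor made trivial, via the torsor trivialization followed by the twist $(g,w)\mapsto(g,g^{-1}\cdot w)$) and apply it twice through the diagonal action $\alpha\times\beta$, which needs neither the rationality of the underlying variety of $G$ nor any inequality between $s$ and $m$, and lands in $\mathcal C_{m+s}$ with the linear model ${\bf A}^m\times{}_\beta{\bf A}^s$. What you lose, relative to the paper, is only the explicit intermediate identifications \eqref{gag}--\eqref{GGG}, which the author reuses later (e.g.\ in the proof of the theorem on conjugating a torus of $\mathcal C_m$ into $D_{m+\dim T}$, see \eqref{bq}); what you gain is a proof with strictly fewer inputs and a cleaner statement of where specialness enters, namely only through the birational triviality of the torsor driving the cancellation.
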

\begin{proof}
The same argument as  in the proof of Theorem \ref{lle} shows that
\eqref{mG} holds.
By Lemma \ref{lf} there is a rational locally free linear action $\gamma$ of $G$ on ${\bf A}^s$ for some $s\geqslant m$.\;Like for $\alpha$, for $\gamma$ we have
\begin{equation}\label{sG}
{_\gamma}{\bf A}^s \overset{G}{\approx}
 {_\lambda}G\times ({_\gamma}{\bf A}^s\dss G)
\end{equation}

Let
$d:=\dim G$.
Since by
%%Ref.\;
\cite{Che54} the underlying variety of $G$ is rational, we have
\begin{equation}\label{grat}
G\approx {\bf A}^d.
\end{equation}
From \eqref{mG}, \eqref{sG}, and  \eqref{grat} we then obtain
\begin{equation}\label{gag}
\begin{split}
{\bf A}^m&\approx {\bf A}^d\times ({_\alpha}{\bf A}^m\dss G),\\[-3pt]
{\bf A}^s&\approx {\bf A}^d\times ({_\gamma}{\bf A}^s\dss G).
\end{split}
\end{equation}
In turn, \eqref{mG}, \eqref{sG}, and \eqref{gag} imply
\begin{equation}\label{GGG}
\begin{split}
{_\alpha}{\bf A}^m\times {\bf A}^d &\overset{G}{\approx}
{_\lambda G}\times
({_\alpha}{\bf A}^m\dss G)\times {\bf A}^d
\overset{G}{\approx}
{_\lambda G}\times{\bf A}^m,\\[-3pt]
{_\gamma}{\bf A}^s\times {\bf A}^d &\overset{G}{\approx}
{_\lambda G}\times
({_\gamma}{\bf A}^s\dss G)\times {\bf A}^d
\overset{G}{\approx}
{_\lambda G}\times{\bf A}^s.
\end{split}
\end{equation}

Since $s\geqslant m$, we have $d+s-m\geqslant 0$ and
 from \eqref{GGG}  we deduce
\begin{equation}\label{fini}
\begin{split}
{_\alpha}{\bf A}^m\times {\bf A}^{d+s-m}&=
{_\alpha}{\bf A}^m\times{\bf A}^d\times {\bf A}^{s-m}\\[-3pt]
&\overset{G}{\approx}
{_\lambda G}\times {\bf A}^m\times {\bf A}^{s-m}
\\[-3pt]
&
=
{_\lambda G}\times {\bf A}^s\\[-3pt]
&\overset{G}{\approx}
{_\gamma}{\bf A}^s\times {\bf A}^d.
\end{split}
\end{equation}
Since the action of $G$ on ${_\gamma}{\bf A}^s\times {\bf A}^d$
is linear, \eqref{fini} completes the proof.
\quad $\square$ \renewcommand{\qed}{}\end{proof}

The next theorem implies that
 ``stably linearizable''
 in Theorem \ref{specstab} cannot be replaced by
 ``linearizable''.

\begin{theorem} For every connected semisimple algebraic group $\,G\neq \{e\}$,
  there
exists a rational nonlinearizable locally free action of $\,G$ on ${\bf A}^d$ for $d={\dim\,G}$.
\end{theorem}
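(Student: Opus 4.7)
My plan is to take as the required rational action on ${\bf A}^d$ the one obtained by transporting the action of $G$ on itself by left translations via any birational isomorphism $\gamma\colon G\dashrightarrow{\bf A}^d$; such a $\gamma$ exists by Chevalley's rationality theorem \cite{Che54}, already used in the proof of Theorem \ref{specstab} (see \eqref{grat}). Local freeness of the resulting action $\alpha$ is immediate from the fact that the left-translation action on ${_\lambda G}$ is free, so the whole content of the theorem lies in the nonlinearizability of $\alpha$.

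To show $\alpha$ is not linearizable, I would argue by contradiction. If $\alpha$ were linearizable, there would exist a linear action $\beta$ of $G$ on $V:={\bf A}^d$ and a $G$-equivariant birational isomorphism $\phi\colon {_\lambda G}\dashrightarrow {_\beta V}$. The key step of the proof will be to upgrade $\phi$ to an isomorphism of $G$ onto a $G$-stable open subset of $V$. For this I use two ingredients: (i) the domain of any $G$-equivariant rational map between regular $G$-varieties is $G$-stable (a formal consequence of equivariance, using that $g\cdot \mathrm{dom}(\phi)=\mathrm{dom}(\phi)$ for every $g\in G$); and (ii) the $G$-action on ${_\lambda G}$ is transitive. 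Together these force $\mathrm{dom}(\phi)$ to be all of $G$, so $\phi$ is a morphism $G\to V$. A parallel argument, applied to the $G$-stable open subset $W:=\mathrm{dom}(\phi^{-1})\subset V$, shows that the preimage $\phi^{-1}(W)$ is a nonempty $G$-stable open subset of $G$ and hence equals $G$, so $\phi(G)\subset W$; moreover the compositions $\phi\circ\phi^{-1}$ and $\phi^{-1}\circ\phi$, being morphisms that agree with the identity on dense open subsets, are themselves identities. Thus $\phi\colon G\to W$ is an isomorphism of varieties.

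In particular, the underlying variety of $G$ is isomorphic to an open subset of ${\bf A}^d$. Lemma \ref{cd1} then forces the center of $G$ to have dimension at least $1$, directly contradicting the semisimplicity of $G$, whose center is finite. The main obstacle is the middle step, passing from an equivariant birational identification to a regular equivariant open embedding; this is where the two special features of the setup (transitivity of $G$ on ${_\lambda G}$, and regularity of the linear action $\beta$ on all of $V$) are essential. Everything after that step---the application of Lemma \ref{cd1} and the finiteness of the center of a semisimple group---is immediate.
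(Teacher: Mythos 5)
Your proposal is correct and follows essentially the paper's own route: you use the same action (left translations transported to ${\bf A}^d$ via Chevalley's rationality of $G$), the same key Lemma \ref{cd1}, and the same final contradiction with the finiteness of the center of a semisimple group. The only difference is how the open embedding of $G$ into ${\bf A}^d$ is produced: the paper notes that a linearization would yield a regular locally free linear action with $d=\dim G$, whose generic orbit is open and isomorphic to the underlying variety of $G$, whereas you upgrade the $G$-equivariant birational map ${}_\lambda G\dashrightarrow {}_\beta{\bf A}^d$ to an isomorphism onto a $G$-stable open subset using stability of the domain and transitivity of left translations---a more hands-on rendering of the same step.
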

\begin{proof}
Since
\eqref{grat} holds,
there exists a rational
locally free action $\alpha$ of $G$ on ${\bf A}^d$ such that
$_\lambda G\overset{G}{\approx}{}_\alpha {\bf A}^d$.\;We claim that $\alpha$ is nonlinearizable.\;For, otherwise, we would get a rational locally free linear (hence, regular) action of $G$ on ${\bf A}^d$.\;Since $d=\dim\,G$,
 one of
 its orbits
 is open in ${\bf A}^d$ and isomorphic to the underlying variety of $G$.\;Therefore, by Lemma \ref{cd1}
the center of $G$ is at least one-dimensional\,---\,a contradiction because $G$ is semisimple.
\quad $\square$ \renewcommand{\qed}{}\end{proof}

For tori  we can get an additional information.

\begin{lemma}\label{lfat} Let $X$ be an irreducible variety endowed with a rational faithful action $\alpha$ of  a  torus $\,T$.\;Then
\begin{enumerate}[\hskip 9mm]%% \rm(i)]
\item[$({\rm i})$] $\alpha$ is locally free{\rm ;}
\item[$({\rm ii})$] $\dim T\leqslant \dim X${\rm;}
\item[$({\rm iii})$] ${\rm tr\,deg}_k k(X)^T=\dim X-\dim T$.
\end{enumerate}
\end{lemma}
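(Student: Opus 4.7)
\textit{Proof strategy.} My plan is to move to a regular affine model and then exploit the weight decomposition of its coordinate ring. First, by Rosenlicht's theorem on regularization I replace $X$ by an irreducible variety $Y$ endowed with a regular, faithful $T$-action and an equivariant birational isomorphism $Y\overset{T}{\approx}X$; passing to a $T$-stable affine open subset (supplied by Sumihiro's theorem, after normalization if necessary), I may further assume $Y$ is affine. Since (i), (ii), (iii) are invariants of the equivariant birational class, it suffices to verify them for this $Y$.

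On the affine $T$-variety $Y$, the coordinate ring decomposes as $k[Y]=\bigoplus_{\chi\in X^*(T)}k[Y]_\chi$. Let $L\subseteq X^*(T)$ be the subgroup generated by those $\chi$ for which $k[Y]_\chi\neq 0$. The closed subgroup $S:=\{t\in T\mid \chi(t)=1\text{ for every }\chi\in L\}$ of $T$ acts trivially on every weight space, hence on $k[Y]$ and on $Y$; faithfulness forces $S=\{e\}$, and Cartier duality between closed subgroups of $T$ and subgroups of $X^*(T)$ then yields $L=X^*(T)$. I pick generators $\chi_1,\ldots,\chi_N$ of $X^*(T)$ and nonzero weight vectors $f_i\in k[Y]_{\chi_i}$, and assemble them into a $T$-equivariant morphism $\Phi\colon Y\to {\bf A}^N$, where $T$ acts on ${\bf A}^N$ diagonally via $(\chi_1,\ldots,\chi_N)$. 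Since $\bigcap_i \ker\chi_i=\{e\}$, this target action is free on $(k^\times)^N$. Hence on the dense open subset $U:=Y\setminus\bigcup_i V(f_i)=\Phi^{-1}((k^\times)^N)$ every stabilizer $T_y\subseteq T_{\Phi(y)}=\{e\}$ is trivial, which establishes~(i).

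From (i) the generic $T$-orbit in $Y$ has dimension $\dim T$, so $\dim X=\dim Y\geqslant\dim T$, giving~(ii). For~(iii) I apply the dimension formula for rational quotients (the version of Rosenlicht's theorem already recalled in the excerpt, cf.\ also \cite[\S2.4]{PV94}) to the regular action on $Y$, obtaining ${\rm tr\,deg}_k\,k(X)^T={\rm tr\,deg}_k\,k(Y)^T=\dim Y-\dim T=\dim X-\dim T$.

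The main obstacle I anticipate lies in the initial reduction rather than in the weight-decomposition argument: Rosenlicht's regularization alone does not deliver an affine $Y$, so one must either appeal to Sumihiro's theorem or normalize first and then restrict to an invariant affine open. Once $Y$ is affine, the weight-space decomposition of $k[Y]$ drives everything else, and the three assertions fall out in quick succession.
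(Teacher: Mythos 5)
Your proof is correct and follows essentially the same route as the paper: regularize to an affine $T$-variety, then use the weight decomposition together with faithfulness to exhibit a dense open subset with trivial stabilizers (the paper does this by closedly and equivariantly embedding the affine model into a $T$-module not contained in a proper submodule, you by mapping it $T$-equivariantly to ${\bf A}^N$ via weight vectors whose characters generate $X^*(T)$), and finally deduce (ii) and (iii) from the Rosenlicht dimension formula for the invariant field. The difference is only in this dual packaging of the weight-space argument; the substance is identical.
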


\begin{proof}
   By
   %%Ref.\;
   \cite[Cor.\,2 of Lemma\,8]{Sum74} (see also
   %%Ref.\;
   \cite[Cor.\,1 of Prop.\,1]{Bia66}) there is an irreducible affine variety $Y$ endowed with a regular action of $\,T$ such\;that
$X\overset{T}{\approx} Y$.
  Hence, we may (and shall) assume that $X$ is affine and $\alpha$ is regular.
  By
  %%Ref.\;
  \cite[Theorem\;1.5]{PV94},
    we also may (and shall) assume that $X$ is a closed $T$-stable subset of a finite-dimensional algebraic $T$-module $V$  not contained in a proper $T$-submodule of $V$.

    As $\alpha$ is faithful, the kernel of the action of $T$ on $V$ is trivial.\;As $T$ is a torus, $V$ is the direct sum of $T$-weight subspaces.\;Hence, if $U$ is the
   complement in $V$ to the union of these spaces,
   this kernel coincides with
   the $T$-stabilizer of every point of $U$.\;Thus, these stabilizers are trivial.\;But by the construction,
   $X\cap U$ is a nonempty open subset of $X$.\;This proves (i) that, in turn, entails (ii) and, by   %%Ref.\;
   \cite[Cor.\;in\;Sect.\;2.3]{PV94}, also (iii).
\quad $\square$ \renewcommand{\qed}{}\end{proof}

\begin{corollary}[%%Ref.\;
{\cite{Dem70}}]\label{Tn} The dimension of every  torus in ${\mathcal C}_n$ is at most\;$n$.
\end{corollary}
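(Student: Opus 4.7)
The plan is to deduce this directly from Lemma \ref{lfat}(ii). Any torus $T$ sitting inside $\mathcal{C}_n = \mathrm{Aut}_k k(\mathbf{A}^n)$ is an algebraic subgroup of $\mathrm{Bir}\,\mathbf{A}^n$, so it comes with a rational action on the irreducible variety $\mathbf{A}^n$. Since $T$ is realized as a subgroup of $\mathcal{C}_n$, distinct elements of $T$ induce distinct birational automorphisms of $\mathbf{A}^n$, i.e.\ the rational action of $T$ on $\mathbf{A}^n$ is faithful.

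With faithfulness in hand, Lemma \ref{lfat}(ii) applied to $X = \mathbf{A}^n$ yields immediately $\dim T \leqslant \dim \mathbf{A}^n = n$, which is exactly the claim.

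There is no real obstacle here: the whole content of the corollary is packaged inside Lemma \ref{lfat}, whose proof already handled the delicate passage (via \cite{Sum74}) from a rational faithful torus action to a regular faithful torus action on an affine variety, and then from faithfulness to local freeness via the weight decomposition. So the proof consists simply of verifying faithfulness and invoking Lemma \ref{lfat}(ii).
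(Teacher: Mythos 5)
Your proof is correct and is exactly the paper's intended argument: Corollary \ref{Tn} is stated as an immediate consequence of Lemma \ref{lfat}(ii), applied to the faithful rational action on $\An$ that a torus in $\mathcal C_n$ carries by definition of an algebraic subgroup of ${\rm Bir}\,\An$. Nothing further is needed.
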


\begin{corollary}\label{stT} Every rational action of a   torus on
$\An$ is stably linearizable.
\end{corollary}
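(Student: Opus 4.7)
The plan is to reduce the corollary to Theorem \ref{specstab}, since all of the needed ingredients have already been assembled. Let $\alpha$ be a rational action of a torus $T$ on $\mathbf{A}^n$, and let $H$ denote the image of $T$ in $\mathcal{C}_n$ corresponding to $\alpha$. Since stable linearizability is a property of this image (by the definition recalled in the Introduction), and since $H$ is a quotient of $T$, hence itself a torus, I would first replace $T$ by $H$ so as to assume that $\alpha$ is a faithful rational action.

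Next, invoke Lemma \ref{lfat}(i) to conclude that, under this faithfulness assumption, $\alpha$ is automatically locally free. Then observe that $T$ is flattenable by Example \ref{to}, and therefore special by Lemma \ref{spec}. Thus $\alpha$ satisfies all three hypotheses of Theorem \ref{specstab}: it is a rational locally free action of a special algebraic group on an affine space. That theorem then yields the stable linearizability of $\alpha$ directly, which is exactly the content of the corollary.

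There is no real obstacle here; the corollary is essentially a packaging of prior results. The only point that must be explicitly addressed is the faithfulness reduction, which is needed to apply Lemma \ref{lfat}(i) (this lemma assumes a faithful action). Once this is observed, the chain \emph{flattenable} $\Rightarrow$ \emph{special}, combined with \emph{faithful torus action} $\Rightarrow$ \emph{locally free}, feeds directly into Theorem \ref{specstab}.
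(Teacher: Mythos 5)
Your proof is correct and follows essentially the same route as the paper, which deduces the corollary in one line from Lemma \ref{lfat}(i) and Theorem \ref{specstab} using that tori are special. Your explicit reduction to a faithful action (replacing $T$ by its image in $\mathcal C_n$, which is again a torus) just spells out a step the paper leaves implicit.
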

\begin{proof}  Since tori are special groups, this follows from Lemma \ref{lfat}(i) and Theorem \ref{specstab}.
\quad $\square$ \renewcommand{\qed}{}\end{proof}

\begin{theorem} \label{torus}
The following properties of a rational action $\alpha$
of a   torus $\,T$ on
$\An$ are equi\-va\-lent{\rm:}
\begin{enumerate}[\hskip 9mm]%%mm\rm(i)]
\item[$({\rm i})$]
$\alpha$ is
linearizable{\rm;}
\item[$({\rm ii})$] the invariant field $k(_\alpha\An)^T$\;is pure over $k$.
\end{enumerate}
\end{theorem}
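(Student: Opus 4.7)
The strategy is to deduce (ii) $\Rightarrow$ (i) from Theorem~\ref{lle}, and to verify (i) $\Rightarrow$ (ii) by a direct calculation with diagonal torus actions.

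For (ii) $\Rightarrow$ (i): both properties in the statement depend only on the image of $T$ in $\mathcal{C}_n$ and on the invariant field, so by passing to the quotient torus $T/\ker\alpha$ I may assume $\alpha$ is faithful. Then $\alpha$ is locally free by Lemma~\ref{lfat}(i), and $T$ is flattenable by Example~\ref{to}. Hence Theorem~\ref{lle} applies verbatim and yields linearizability of $\alpha$.

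For (i) $\Rightarrow$ (ii): after conjugating in $\mathcal{C}_n$, I may assume the image of $T$ lies in ${\rm GL}_n$, so $\alpha$ comes from a rational linear representation of $T$. Since every rational representation of a torus splits as a direct sum of one-dimensional weight spaces, a further linear change of coordinates produces characters $\chi_1,\ldots,\chi_n \in X(T)$ and linear forms $y_1,\ldots,y_n$ on $\An$ with $t\cdot y_i = \chi_i(t)\,y_i$. Set
\[
L := \{(a_1,\ldots,a_n) \in \mathbb{Z}^n : \chi_1^{a_1}\cdots\chi_n^{a_n} = 1\} \subseteq \mathbb{Z}^n.
\]
Decomposing any element of $k(\An) = k(y_1,\ldots,y_n)$ into $T$-semi-invariant monomial pieces and forming ratios shows that $k(_\alpha\An)^T$ is generated as a field over $k$ by the Laurent monomials $y^{a} := y_1^{a_1}\cdots y_n^{a_n}$ with $a \in L$. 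Choosing a $\mathbb{Z}$-basis $e_1,\ldots,e_r$ of $L$ then gives
\[
k(_\alpha\An)^T = k(y^{e_1},\ldots,y^{e_r}),
\]
and these generators are algebraically independent over $k$, since the $e_j$ are $\mathbb{Z}$-linearly independent in $\mathbb{Z}^n$ (equivalently, comparing the number $r$ of generators with the transcendence degree furnished by Lemma~\ref{lfat}(iii) forces the generators to be independent). Therefore $k(_\alpha\An)^T$ is pure over $k$.

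The bulk of the work for (ii) $\Rightarrow$ (i) has already been absorbed into Theorem~\ref{lle}; the only remaining subtlety is the preliminary reduction to a faithful action, which is harmless. The reverse implication is essentially classical and presents no real obstacle, so the genuine content of the theorem lies in recognising that tori fit into the flattenable framework of Theorem~\ref{lle}.
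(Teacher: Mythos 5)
Your proof is correct. The direction (ii)$\Rightarrow$(i) is exactly the paper's argument: kill the kernel, invoke Lemma~\ref{lfat}(i) for local freeness, note $T$ is flattenable (Example~\ref{to}), and apply Theorem~\ref{lle}. Where you genuinely diverge is (i)$\Rightarrow$(ii). The paper deduces it from the more general Lemma~\ref{diag}: for \emph{any} diagonalizable group $D$ acting linearly, after conjugating $D$ into $D_n$ one has ${}_\alpha\An\overset{D}{\approx}{}_\lambda D_n$ because $\An$ is a flattening of $D_n$, so the rational quotient is $\approx D_n/D$, a torus, hence rational. You instead compute the invariant field directly: diagonalize the representation, observe that $k({}_\alpha\An)^T$ is generated by the invariant Laurent monomials $y^a$ with $a$ in the relation lattice $L\subseteq\mathbf Z^n$, and take a $\mathbf Z$-basis of $L$ to get algebraically independent generators (monomials with $\mathbf Z$-independent exponent vectors are algebraically independent, so you do not really need the parenthetical appeal to Lemma~\ref{lfat}(iii), which as stated requires a faithful action). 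Both routes are sound. Yours is more elementary and self-contained, essentially the classical multiplicative-invariant computation behind rationality of torus quotients; the paper's is shorter once the flattening formalism is in place and, more importantly, it isolates Lemma~\ref{diag} for arbitrary diagonalizable groups (finite ones included), which the paper reuses later, e.g.\ in the final theorem on the order-two element coming from the Artin--Mumford threefold, where purity of $k(\An)^G$ for a finite $G$ conjugated into $D_n$ is needed. Your monomial argument would in fact extend verbatim to diagonalizable subgroups of $D_n$, but as written you only claim it for tori, which suffices for Theorem~\ref{torus}.
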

\begin{proof}
      Assume that (ii) holds.\;Let $T_0$ be the kernel of the action of $T$ on $X$.\;By Lemma\;\ref{lfat},  the induced action of $T/T_0$ on $\An$ is locally free. Hence, replacing $T$ by $T/T_0$, we may
       assume that the action of $T$ on $\An$ is locally free.\;Since $T$ is flattenable, in this case   (ii)$\Rightarrow$(i) follows from
      Theorem \ref{lle}.

      (i)$\Rightarrow$(ii) is the corollary of the following more general statement.

\begin{lemma}\label{diag} For any rational linear action $\alpha$ of a diagonalizable affine algebraic group $D$ on $\An$, the invariant field $k(_\alpha\An)^D$  is pure over $k$.
\end{lemma}
\begin{proof}[Proof of Lemma {\rm \ref{diag}}] By
%%Ref.\;
\cite[Prop.\;8.2(d)]{Bor91},
the image of $D$ under the homomorphism $D\to {\rm GL}_n$ determined by $\alpha$ is conjugate to a subgroup of $D_n$.\;Hence, we may (and shall) assume that $D$ is a closed subgroup of $D_n$.\;Since $\An$ with the natural action of $D_n$ is a flattening of $D_n$ (see Example \ref{emb}), we have ${_\alpha}\An\overset{D}{\approx}{_\lambda}D_n$. Therefore,
\begin{equation}\label{trr}
{_\alpha}\An\dss D\approx D_n/D.
 \end{equation}
 But $D_n/D$ is a torus, see
 %%Ref.\;
 \cite[Props.\;8.4 and 8.5]{Bor91}, hence, a rational variety.\;The claim now follows from \eqref{trr}.
\quad $\square$ \renewcommand{\qed}{}\end{proof}
\renewcommand{\qed}{}\end{proof}

\begin{corollary}[%%Ref.\;
{\cite[Cor.\,2 of Prop.\,1]{Bia66}}]\label{torii}
 \

%%The following equivalent properties {\rm(a)} and {\rm(b)} hold{\rm:}
\begin{enumerate}[\hskip 9mm]%%mm\rm(a)]
\item[$({\rm a})$]  Every faithful rational action of a torus $\,T$  on $\An$ is linearizable in either of the following cases{\rm:}
\begin{enumerate}[\hskip 9mm]%%\rm(i)]
\item[$({\rm i})$] $\dim T\geqslant n-2${\rm;}
\item[$({\rm ii})$]
$\;n\leqslant 3$.
\end{enumerate}

 \item[$({\rm b})$]
 Every $d$-dimensional torus  in $\,{\mathcal C}_n$
 %%of dimensions $n$, $n-1$ or $n-2$
   for $d=n-2, n-1, n$ is conjugate to a subgroup of $D_n$.\;In particular,  every $n$-dimensional torus in ${\mathcal C}_n$ is conjugate to
 %%contains only one class of conjugated maximal tori of dimension $n$, namely, that of
 $D_n$.
\end{enumerate}
\end{corollary}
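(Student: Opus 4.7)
The plan is to deduce both parts from Theorem \ref{torus}, reducing linearizability to purity of the invariant field, and then controlling the transcendence degree via Lemma \ref{lfat} so that classical rationality results apply in low transcendence degree.

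For part (a), I would first observe that by Theorem \ref{torus} it suffices to prove that $k({}_\alpha\An)^T$ is pure over $k$. Since the action is faithful, Lemma \ref{lfat}(iii) gives ${\rm tr\,deg}_k\,k({}_\alpha\An)^T = n - \dim T$. Under hypothesis (i) this quantity is at most $2$ immediately. Under hypothesis (ii), Lemma \ref{lfat}(ii) yields $\dim T \leq n \leq 3$, and after dismissing the trivial case $T = \{e\}$ we have $\dim T \geq 1$, so the transcendence degree is again at most $2$. Both cases therefore reduce to showing that a unirational subfield of $k(x_1,\ldots,x_n)$ of transcendence degree at most $2$ is purely transcendental over $k$.

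For transcendence degree $0$ this is immediate from algebraic closedness; for transcendence degree $1$ it is L\"uroth's theorem; and for transcendence degree $2$ it is Castelnuovo's theorem asserting that in characteristic zero every unirational surface is rational. Invoking Castelnuovo will be the only essential external input — the hard part of the argument, though hardly an \emph{obstacle} since it is a classical theorem; everything else follows from results already established earlier in the paper. This completes (a).

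For part (b), given a $d$-dimensional torus $T \subseteq \mathcal{C}_n$ with $d \in \{n-2, n-1, n\}$, I would apply (a)(i) to conclude that the natural rational action of $T$ on $\An$ is linearizable, so $T$ is conjugate in $\mathcal{C}_n$ to a subtorus of ${\rm GL}_n$. Any subtorus of ${\rm GL}_n$ is conjugate within ${\rm GL}_n$ to a subgroup of $D_n$ (the standard diagonalization of tori used in the proof of Lemma \ref{diag}), so composing the two conjugations places $T$ inside $D_n$. In the special case $d = n$, a closed connected $n$-dimensional subgroup of the $n$-dimensional torus $D_n$ must coincide with $D_n$, which yields the final assertion.
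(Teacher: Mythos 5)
Your argument is correct and follows essentially the same route as the paper: Theorem \ref{torus} reduces linearizability to purity of $k({}_\alpha\An)^T$, Lemma \ref{lfat} bounds the transcendence degree by $2$ in both cases (with (ii) reduced to (i) after discarding $T=\{e\}$), and unirationality plus L\"uroth--Castelnuovo gives purity, while (b) is the standard consequence via diagonalization of subtori of ${\rm GL}_n$. Nothing essential is missing.
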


\begin{proof} (a) By Corollary \ref{Tn},
if $\,T\neq\{e\}$, then (ii)$\Rightarrow$(i).\;Assume that (i) holds. Then ${\rm tr\,deg}_k k(\An)^T\leqslant 2$ by
Lemma \ref{lfat}(iii).\;As $k(\An)^T$ is uni\-rational, it is then pure over $k$ by the L\"uroth and Castelnuovo theorems; whence the claim by Theorem \ref{torus}.

Part (b) follows from (a).
\quad $\square$ \renewcommand{\qed}{}\end{proof}

By Corollaries \ref{Tn} and \ref{torii}(b) all $n$-dimensional tori in $\mathcal C_n$ are maximal and conjugate and there are no maximal $(n-1)$- and $(n-2)$-dimensional tori
in $\mathcal C_n$.
\;In dimension $n-3$ the situation is different:

\begin{theorem}\label{nr}
Let $n\geqslant 5$.\;Every $(n-3)$-dimensional connected affine algebraic group $G$
  can be realized as an algebraic subgroup of $\,\mathcal C_n$ such that
  \begin{enumerate}[\hskip 9mm]%%\rm(i)]
  \item[$({\rm i})$] $k(\An)^G$ is not pure, but stably pure over $k${\rm;}
  \item[$({\rm ii})$] the natural rational action of $\,G$ on $\An$ is locally free.
  \end{enumerate}
\end{theorem}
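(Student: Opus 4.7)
The plan is to realize the desired $G$-action on $\An$ by transporting a free left-translation action from a convenient birational model.

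First, I would select an irreducible threefold $Y$ over $k$ that is stably rational but not rational, with the additional property that $Y\times{\bf A}^{n-3}\approx \An$. Classical examples of such threefolds (e.g.\ the conic bundles of Beauville-Colliot-Th\'el\`ene-Sansuc-Swinnerton-Dyer) yield such a $Y$ once $n-3$ is large enough, since if $Y\times{\bf A}^m$ is rational then so is $Y\times{\bf A}^{m'}$ for every $m'\geqslant m$. To cover the boundary case $n=5$, one needs a threefold $Y$ for which already $Y\times{\bf A}^2\approx{\bf A}^5$; such a $Y$ is provided by the construction from \cite{SB04} pointed to in the acknowledgement.

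Next, by \cite{Che54} the underlying variety of $G$ is rational, so $G\approx{\bf A}^{n-3}$, and hence
\[
G\times Y\approx{\bf A}^{n-3}\times Y\approx \An.
\]
I would consider the regular, globally free action of $G$ on $G\times Y$ by left translations on the first factor (and trivially on $Y$), fix a birational isomorphism $\varphi\colon G\times Y\dashrightarrow\An$, and transport the $G$-action along $\varphi$. The outcome is a rational action of $G$ on $\An$, hence an embedding of $G$ as an algebraic subgroup of $\mathcal C_n$.

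Finally, I would verify the two required properties. Property (ii) is immediate, since the original action on $G\times Y$ is free and local freeness is preserved under $G$-equivariant birational isomorphism. For (i), the rational quotient of $G\times Y$ by left translation on the first factor is $Y$, so $k(\An)^G\cong k(Y)$, and by the choice of $Y$ this field is not pure but is stably pure over $k$. The main obstacle of the whole argument is not the formal transport of the action, which is routine birational geometry, but rather the existence of a threefold $Y$ with the precise ``stable-purity radius'' needed: for $n\geqslant 6$ one may quote a classical Beauville-Colliot-Th\'el\`ene-Sansuc-Swinnerton-Dyer example, whereas the boundary value $n=5$ relies on the sharper construction from \cite{SB04}.
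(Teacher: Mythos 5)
Your proposal is correct and follows essentially the same route as the paper: take a nonrational but stably rational threefold $X$ (from \cite{SB04}, so that ${\bf A}^2\times X\approx{\bf A}^5$), use Chevalley's rationality of $G$ to get $\An\approx{}_\lambda G\times X$, transport the left-translation action, and identify $k(\An)^G$ with $k(X)$ via the rational quotient (the paper cites \cite[Lemma 3]{Ros612} for this last identification). Your remark distinguishing the BCSS examples for $n\geqslant 6$ from the sharper \cite{SB04} construction needed at $n=5$ mirrors exactly the paper's own footnote on the history of the statement.
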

\begin{proof} By
%%Ref.\;
%%\cite{BCSS85}
\cite{SB04}  there exists a nonrational threefold $X$ such that
${\bf A}^2\times X \approx {\bf A}^5$.\;Then
$\An\approx{\bf A}^{n-3}\times X$.\;This and  \eqref{grat} yield that
there exists a rational locally free action $\gamma$ of $G$ on $\An$ such that ${_\gamma}\An\overset{G}\approx{_\lambda}G\times X$. Since $k({_\lambda}G)^G=k$, by %%Ref.\;
\cite[Lemma 3]{Ros612}, we have
${_\gamma}\An\dss G\approx X$; whence\linebreak the claim.
\quad $\square$ \renewcommand{\qed}{}\end{proof}

\begin{corollary}\label{non}  Let $n\geqslant 5$.\;Then
\begin{enumerate}[\hskip 9mm]%%\rm(a)]
\item[$({\rm a})$] there is a  rational locally free nonlinearizable action  of an $(n-3)$-dimensional torus on\;$\An${\rm;}
\item[$({\rm b})$]  $\,{\mathcal C}_n$ contains an $(n-3)$-dimensional maximal   torus.
\end{enumerate}
\end{corollary}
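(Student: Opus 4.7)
The plan is to apply Theorem \ref{nr} to $G$ an $(n-3)$-dimensional torus, and then combine the conclusion with Theorem \ref{torus} and Corollary \ref{torii}(b) to get both assertions.

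\textbf{Part (a).} Take $G = T$ an $(n-3)$-dimensional torus. Theorem \ref{nr} produces a realization of $T$ as an algebraic subgroup of $\mathcal C_n$ for which the corresponding rational action $\alpha$ on $\An$ is locally free and $k({}_\alpha\An)^T$ is not pure over $k$. If $\alpha$ were linearizable, then by the implication (i)$\Rightarrow$(ii) of Theorem \ref{torus} the field $k({}_\alpha\An)^T$ would have to be pure over $k$, a contradiction. Hence $\alpha$ is nonlinearizable.

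\textbf{Part (b).} Let $T\subseteq\mathcal C_n$ be the torus constructed in part (a). Suppose, for contradiction, that $T$ is not maximal, so that $T$ is strictly contained in some torus $T'\subseteq\mathcal C_n$. By Corollary \ref{Tn} we have $\dim T'\leqslant n$, hence $\dim T'\in\{n-2,n-1,n\}$. By Corollary \ref{torii}(b), every such torus is conjugate in $\mathcal C_n$ to a subgroup of $D_n$; thus there exists $g\in\mathcal C_n$ with $gT'g^{-1}\subseteq D_n\subseteq\mathrm{GL}_n$. But then $gTg^{-1}\subseteq\mathrm{GL}_n$ as well, so the action of $T$ on $\An$ would be linearizable, contradicting part (a). Therefore $T$ is maximal.

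The only subtle step is part (b): one must remember that linearizability passes from a torus to any subgroup simply by restricting the conjugating element, so the conjugation from Corollary \ref{torii}(b) applied to the overtorus $T'$ automatically linearizes $T$. Everything else is a direct reading off of Theorem \ref{nr}, Theorem \ref{torus}, and Corollary \ref{torii}(b); no additional calculation is required, and there is no real obstacle beyond assembling these ingredients in the right order.
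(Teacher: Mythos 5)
Your proof is correct and follows essentially the same route as the paper: apply Theorem \ref{nr} with $G$ a torus, deduce (a) from the equivalence in Theorem \ref{torus}, and deduce (b) by noting that a strictly larger torus would have dimension $n-2$, $n-1$, or $n$ and hence be conjugate into $D_n$ by Corollary \ref{torii}(b), linearizing $T$ and contradicting (a). The paper states this more tersely, but the argument is the same.
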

\begin{proof} Use the notation of Theorem \ref{nr} and its proof and let $G$ be a torus. Then $\gamma$ is nonlinearizable by  Theorem  \ref{torus}. This proves (a).\;As the torus $G$ is
not conjugate to a subgroup of $D_n$, Corollary  \ref{torii}(b) implies that it is maximal.\;This proves (b).
\quad $\square$ \renewcommand{\qed}{}\end{proof}

\begin{corollary}\label{g}  Every  $\,{\mathcal C}_n$ for $n\geqslant 5$ contains a nonlinearizable, but stably linearizable element
of infinite order.\footnote{In the original version of this paper (see {\tt arXiv:1110.2410v1-v3}) the inequality $n\geqslant 5$ in Theorem \ref{nr} and Corollaries \ref{non}, \ref{g}, and \ref{fifi}
was replaced by $n\geqslant 6$ because  I used the result of \cite{BCSS85} in place of that of \cite{SB04},
about which I was unaware.}
\end{corollary}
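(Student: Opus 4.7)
The plan is to produce such an element inside the nonlinearizable torus provided by Corollary~\ref{non}(a). Fix $n\geqslant 5$ and apply that corollary to obtain an $(n-3)$-dimensional torus $T\subset \mathcal C_n$ whose natural rational action on $\An$ is nonlinearizable; equivalently, $T$ is not conjugate in $\mathcal C_n$ to a subgroup of ${\rm GL}_n$.

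I would then pick a topological generator $g\in T$, i.e.\ an element whose cyclic subgroup $\langle g\rangle$ is Zariski dense in $T$. Realising $T\cong (k^{\times})^{n-3}$, such $g$ are precisely the $(n-3)$-tuples of multiplicatively independent units, which exist in abundance since $k$ is an algebraically closed field of characteristic zero (e.g.\ take algebraically independent transcendentals). Because $\dim T>0$, any such $g$ has infinite order in $\mathcal C_n$.

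The key step is to show that $g$ is not linearizable in $\mathcal C_n$. Suppose, aiming at a contradiction, that $\varphi g\varphi^{-1}\in {\rm GL}_n$ for some $\varphi\in \mathcal C_n$. Conjugation by $\varphi$ is a homeomorphism of $\mathcal C_n$ in the Zariski topology, being the composition of a left and a right translation, both of which are homeomorphisms by the properties of $\mathcal C_n$ recalled in the Introduction. By the choice of $g$ the closure of $\langle g\rangle$ in $\mathcal C_n$ is $T$, so the closure of $\langle \varphi g\varphi^{-1}\rangle$ in $\mathcal C_n$ equals $\varphi T\varphi^{-1}$. On the other hand $\langle \varphi g\varphi^{-1}\rangle$ is contained in the algebraic subgroup ${\rm GL}_n$ of $\mathcal C_n$, and by the compatibility of closures in $\mathcal C_n$ and in an algebraic subgroup, this closure coincides with the closure of $\langle \varphi g\varphi^{-1}\rangle$ in ${\rm GL}_n$ and therefore lies inside ${\rm GL}_n$. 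Hence $\varphi T\varphi^{-1}\subseteq {\rm GL}_n$, i.e.\ $T$ is linearizable, contradicting the choice of $T$.

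To finish, Corollary~\ref{stT} asserts that $T$ is stably linearizable as a subgroup of $\mathcal C_n$, so there exist $m\geqslant n$ and $\psi\in\mathcal C_m$ with $\psi T\psi^{-1}\subseteq {\rm GL}_m$; in particular $\psi g\psi^{-1}\in {\rm GL}_m$, which shows that $g$ is stably linearizable. The main conceptual obstacle is the conjugation-closure step above; everything else is immediate once a topological generator of the nonlinearizable torus is in hand, and no further input is needed beyond Corollaries~\ref{non}(a) and \ref{stT}.
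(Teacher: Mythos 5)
Your proposal is correct and is essentially the paper's own argument: take a topological generator $g$ of the $(n-3)$-dimensional torus $T$ supplied by Corollary \ref{non}, use the Zariski-topology closure argument (conjugation being a homeomorphism, algebraic subgroups being closed) to show that linearizing $g$ would linearize $T$, and invoke Corollary \ref{stT} for stable linearizability; the paper phrases the contradiction via maximality of $T$ from Corollary \ref{non}(b) and $\overline{D_n}=D_n$ instead of nonlinearizability of $T$ and closedness of ${\rm GL}_n$, which is the same idea. One small caveat: your parenthetical example of ``algebraically independent transcendentals'' is unavailable when $k=\overline{\mathbf Q}$, so justify the existence of a topological generator either by multiplicatively independent elements of the prime field (e.g.\ distinct primes) or, as the paper does, by citing \cite[Sect.\;III.8.8]{Bor91}.
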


\begin{proof} For any subset $X$ of $\mathcal C_n$ denote by $\overline{X}$ the closure of $X$ in the Zariski topology of $\mathcal C_n$ (see Section 1).\;By Corollary \ref{non}(b),  $\mathcal C_n$ contains an $(n-3)$-dimensional maximal  torus $T$.\;By
%%Ref.\;
\cite[Sect.\;III.8.8]{Bor91},
 there exists an element $g\in T$ such that $T=\overline S$ for $S:=\{g^d\mid d\in \mathbf Z\}$.\;Corollary\;\ref{stT} yields that
$g$ is stably linearizable.\;Assume that $g$ is linearizable and let $h\in\mathcal C_n$  be an element  such that
$hgh^{-1}\in D_n$.\;Then $S\subset h^{-1}D_nh$.\;Since left and right translations of $\mathcal C_n$ are homeomorphisms and $\overline{D_n}=D_n$, we obtain $$T=\overline{S}\subset\overline{h^{-1}D_nh}=h^{-1}\overline{D_n}h=h^{-1}D_nh.$$
This contradicts the maximality of $T$ because $h^{-1}D_nh$ is an $n$-dimensional torus.
 \quad $\square$ \renewcommand{\qed}{}\end{proof}

The next statement yields a rectification of Corollaries \ref{stT} and \ref{g}.

\begin{theorem} Every   torus $\,T$ in ${\mathcal C}_m$ is conjugate in ${\mathcal C}_{m+\dim T}$  to a subgroup of $\,D_{m+\dim T}$.
\end{theorem}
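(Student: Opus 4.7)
The plan is to reduce the statement to an application of Theorem \ref{torus}. Set $d:=\dim T$, let $\alpha$ denote the rational action of $T$ on ${\bf A}^m$ arising from the embedding $T\hookrightarrow\mathcal{C}_m$, and let $\widehat\alpha$ be its extension to ${\bf A}^{m+d}={\bf A}^m\times{\bf A}^d$ with $T$ acting trivially on the second factor. Then $\widehat\alpha$ is precisely the rational action corresponding to the natural embedding $T\hookrightarrow\mathcal{C}_{m+d}$, so it is enough to show that $\widehat\alpha$ is linearizable: the image of $T$ in $\mathrm{GL}_{m+d}$ is then a torus, and by the conjugacy of maximal tori in $\mathrm{GL}_{m+d}$ any such subtorus is conjugate inside $\mathrm{GL}_{m+d}$ to a subgroup of $D_{m+d}$. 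By Theorem \ref{torus}, linearizability of $\widehat\alpha$ is equivalent to purity of $k({}_{\widehat\alpha}{\bf A}^{m+d})^T$ over $k$.

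Since $T\subseteq\mathcal{C}_m$, the action $\alpha$ is faithful and thus locally free by Lemma \ref{lfat}(i). I would then repeat the argument from the proofs of Theorems \ref{lle} and \ref{specstab}, using that $T$ is special (Lemma \ref{spec}) and flattenable via its standard diagonal action $\beta$ on ${\bf A}^d$ (Example \ref{to}), to obtain the $T$-equivariant birational isomorphism
\begin{equation*}
{}_\alpha{\bf A}^m\overset{T}{\approx}{}_\beta{\bf A}^d\times Y,
\end{equation*}
where $Y$ is a model of ${}_\alpha{\bf A}^m\dss T$. Forgetting the $T$-action yields ${\bf A}^m\approx{\bf A}^d\times Y$; translated into field theory, the extension $k(Y)(t_1,\ldots,t_d)$ is $k$-isomorphic to $k(x_1,\ldots,x_m)$ and is therefore pure over $k$.

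Crossing both sides of the displayed equivalence with ${\bf A}^d$ (acted on trivially) gives ${}_{\widehat\alpha}{\bf A}^{m+d}\overset{T}{\approx}{}_\beta{\bf A}^d\times Y\times{\bf A}^d$, so the rational quotient of $\widehat\alpha$ is $Y\times{\bf A}^d$ and its invariant field is $k(Y)(t_1,\ldots,t_d)$, whose purity over $k$ was just established. Theorem \ref{torus} then delivers linearizability of $\widehat\alpha$ and completes the proof. The only point requiring care is the dimensional bookkeeping: $\dim Y=m-d$, and adjoining exactly $d$ new variables converts the stable rationality of $k(Y)$ encoded by ${\bf A}^m\approx{\bf A}^d\times Y$ into the rationality needed to invoke Theorem \ref{torus}; no serious obstacle is expected beyond this observation and the flattening argument already developed in Section \ref{flat}.
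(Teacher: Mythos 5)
Your argument is correct and follows essentially the same route as the paper: both reduce the claim to purity over $k$ of the invariant field of the extended action of $T$ on ${\bf A}^{m+d}={\bf A}^m\times{\bf A}^d$, obtained from the decomposition ${}_\alpha{\bf A}^m\overset{T}{\approx}{}_\lambda T\times({}_\alpha{\bf A}^m\dss T)$ (speciality/flattenability of $T$) together with $T\approx{\bf A}^d$, and then invoke Theorem \ref{torus} (equivalently Theorem \ref{lle}) to linearize and diagonalize the resulting linear torus. The only cosmetic difference is that the paper justifies the identification $({\bf A}^d\times{}_\alpha{\bf A}^m)\dss T\approx{\bf A}^d\times({}_\alpha{\bf A}^m\dss T)$ by citing Rosenlicht's lemma, a step you assert without reference.
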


\begin{proof} Let $\alpha$ be the natural rational action of $T$ on ${\bf A}^m$ and
 let $d:=\dim T$. By Lemma \ref{lfat}, $\alpha$ is locally free.\;By
 %%Ref.\;
 \cite[Lemma 3]{Ros612}, \eqref{mG} and \eqref{grat} we have
\begin{equation}\label{bq}
\begin{split}
({\bf A}^d\times{}_\alpha{\bf A}^m)\dss T&\approx{\bf A}^d\times({_\alpha}{\bf A}^m\dss T),\\[-3pt]
{_\alpha}{\bf A}^m&\overset{T}{\approx} {%%_\lambda
_\lambda }T\times ({_\alpha}{\bf A}^m\dss T),\\[-3pt]
T&\approx {\bf A}^d.
\end{split}
\end{equation}
From \eqref{bq} we deduce that $k({}_\alpha{\bf A}^m\times{\bf A}^d)^T$ is pure over $k$.\;Since $\,T$ is flattenable, Theo\-rem\;\ref{lle} then entails that
%%\begin{equation*}
${\bf A}^d\times{_\alpha}{\bf A}^m \overset{T}{\approx}{_\gamma}{\bf A}^{m+d}$
%%\end{equation*}
for a rational linear action $\gamma$; whence the claim.
\quad $\square$ \renewcommand{\qed}{}\end{proof}

%%%%%%%%%%%%%%%%%%%%%%%TRIANGULAR

\section{Subgroups of the rational de Jonqui\`eres groups}\label{Jon}

Let $t_1,\ldots, t_n$ be a system of generators of $k(\An)$ over $k$,
\begin{equation*}
k(\An)=k(t_1,\ldots, t_n).
\end{equation*}
The elements $t_1,\ldots, t_n$ are algebraically independent over $k$ and determine the following flag of subfields of $k(\An)$:%%\vskip -5mm
\begin{gather}\label{ff}
\begin{gathered}
K_{n}\!\subset\! K_{n-1}\!\subset\!\!\cdots\! \!\subset\!K_0,
%%\\
%% \;\;
\mbox{where}\;\;
%%\quad
K_i\!:=\!\begin{cases}\! k(t_{i+1}, t_{i+2},\ldots, t_n) &\hskip -2mm\mbox{if $\,i\!\leqslant\! n\!-\!1$},\\
k &\hskip -2.5mm\mbox{\,if $i\!=\!n$}.
\end{cases}
\end{gathered}
\end{gather}

For any elements $f_i\in K_i$ and $\mu_i\in k^{\times}$, $i=1,\ldots, n$, put
\begin{equation}\label{trio}
t'_i:=\mu_it_i+f_i \quad\mbox{and}\;\; K'_i:=\begin{cases}\! k(t'_{i+1}, t'_{i+2},\ldots, t'_n) &\mbox{if $\;i\leqslant n-1$},\\
k &\mbox{if $\;i=n$}.
\end{cases}
\end{equation}
It follows from \eqref{trio} that there are elements $f'_i\in K'_i$, $i=1,\ldots, n$, such that
\begin{equation*}
t_i=\mu^{-1}_it'_i+f'_i.
\end{equation*}
Hence, $K'_i=K_i$ for every $i$.\;In particular, $t'_1,\ldots, t'_n$ is an algebraically independent  system of gene\-ra\-tors
of $k(\An)$ over $k$, so there is an element $g\in {\mathcal C}_n$ such that
\begin{equation}\label{T}
g\cdot t_i=\mu_it_i+f_i\qquad \mbox{for every} \quad i=1,\ldots,n.
\end{equation}

The set $\mathcal J(t_1,\ldots, t_n)$ of all such elements $g$
is a subgroup of ${\mathcal C}_n$.\;It stabilizes the flag of subfields \eqref{ff}:
\begin{equation}\label{ifl}
g\cdot K_i= K_i\qquad\mbox{for all}\;\; g\in \mathcal J(t_1,\ldots, t_n)\;\;\mbox{and}\;\; i=0,\ldots, n.
\end{equation}

If $s_1,\ldots, s_n$ is another system of generators of of $k(\An)$ over $k$, then the subgroups
%%Every two
%%such subgroups
$\mathcal J(t_1,\ldots, t_n)$ and $\mathcal J(s_1,\ldots, s_n)$
are conjugate in ${\mathcal C}_n$.

Given an analogy of the construction of $\mathcal J(t_1,\ldots, t_n)$ with that of the de Jonqui\`eres subgroup of ${\rm Aut}_kk[t_1,\ldots, t_n]$,\;cf.
%%\,\,Ref.\;
\cite[p.\,85]{vdE00}, we call
$\mathcal J(t_1,\ldots\linebreak \ldots, t_n)$
 the {\it rational de Jonqui\`eres subgroup} of $\,{\mathcal C}_n$ with respect to $t_1,\ldots, t_n$.

 \begin{example}\label{LK} By the Lie--Kolchin theorem every closed connected solvable subgroup $G$ of
${\rm GL}_n$
is conjugate in ${\rm GL}_n$ to a subgroup of
$\mathcal J(x_1,\ldots, x_n)$. Hence, $G$
lies in $\mathcal J(t_1,\ldots, t_n)$ where $t_1,\ldots, t_n$ are
 the homogeneous linear forms in $x_1,\ldots, x_n$.
\end{example}

In the notation of \eqref{T}, for every $i=1,\ldots, n$, we have the following maps:
\begin{equation}\label{cF}
\begin{split}
\chi_i\colon &\mathcal J(t_1,\ldots, t_n)\to k^{\times},\quad g\mapsto \mu_i,\\[-2pt]
\varphi_i\colon &\mathcal J(t_1,\ldots, t_n)\to K_i,\quad g\mapsto f_i.
\end{split}
\end{equation}

\begin{lemma}\label{Jo}
For every $i=1,\ldots,n$,
\begin{enumerate}[\hskip 9mm]%%\rm(a)]
\item[$({\rm a})$] $\chi_i$
is a homomorphism of groups{\rm;}
\item[$({\rm b})$]  for all $g_1, g_2\in {\mathcal J}(t_1,\ldots, t_n),$
\begin{equation}\label{coc}
\varphi_i(g_1g_2)=\chi_i(g_2)\varphi_i(g_1)+g_1\!\cdot\!(\varphi_i(g_2));
\end{equation}
\item[$({\rm c})$] if $\,G$ is an algebraic subgroup of $\,{\mathcal C}_n$ contained in ${\mathcal J}(t_1,\ldots, t_n)$, then
$\chi_i|^{}_{G}$ is a regular function on $\,G$ and there is a rational function $F_i
\in k(G\times \An)$ such that
$F_i(g)=\varphi_i(g)$ for all $g\in G${\rm;}
\item[$({\rm d})$]  the order of every element $g\in\bigcap_{i=1}^n{\rm ker}\,\chi_i$, $g\neq e$, is infinite.
\end{enumerate}
\end{lemma}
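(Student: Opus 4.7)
The plan is to exploit the defining formula $g\cdot t_i=\chi_i(g)t_i+\varphi_i(g)$ together with the stabilization of the flag \eqref{ff}, which first has to be made rigorous: a direct descending induction on $i$ (starting from $i=n$, where $f_n\in K_n=k$) shows that $\mathcal J(t_1,\ldots,t_n)$ is closed under inversion and that $g\cdot K_i=K_i$ for every $g\in\mathcal J(t_1,\ldots,t_n)$, which is \eqref{ifl}.

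For (a) and (b), expand $(g_1g_2)\cdot t_i=g_1\cdot(g_2\cdot t_i)$: since $g_1$ acts by a $k$-algebra automorphism,
\begin{equation*}
g_1\cdot(\chi_i(g_2)t_i+\varphi_i(g_2))=\chi_i(g_2)\bigl(\chi_i(g_1)t_i+\varphi_i(g_1)\bigr)+g_1\cdot\varphi_i(g_2).
\end{equation*}
Because $\varphi_i(g_2)\in K_i$ and $g_1\cdot K_i=K_i$ by \eqref{ifl}, the last summand lies in $K_i$. Comparing with $\chi_i(g_1g_2)t_i+\varphi_i(g_1g_2)$ and using that $t_i$ is transcendental over $K_i$, I read off $\chi_i(g_1g_2)=\chi_i(g_1)\chi_i(g_2)$ and the cocycle identity \eqref{coc}.

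For (c), let $G\subseteq\mathcal J(t_1,\ldots,t_n)$ be an algebraic subgroup and use the ``algebraic family'' construction from the Introduction to produce $\widehat{t_i}\in k(G\times\An)$ with $\widehat{t_i}(g)=g\cdot t_i$ for every generic $g\in G$. Regarding $\widehat{t_i}$ as an element of $L(t_i)$ with $L:=k(G)(t_{i+1},\ldots,t_n)$ and observing that its specialization at generic $g$ is linear in $t_i$, a degree argument forces $\widehat{t_i}=At_i+B$ with $A,B\in L$ and $A(g)=\chi_i(g)$, $B(g)=\varphi_i(g)$ generically. Because $\chi_i(g)\in k^\times$ for every $g$, the rational function $A$ must be independent of $t_{i+1},\ldots,t_n$, so $A\in k(G)$; combined with (a), $\chi_i|_G$ is a rational group homomorphism into $k^\times$, hence everywhere-defined, i.e., regular (translating by group elements removes all poles). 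The function $F_i:=B\in k(G\times\An)$ then satisfies the claim. The main technical obstacle is exactly this step, isolating $\chi_i$ and $\varphi_i$ as genuine algebraic objects on $G$ from the single rational function $\widehat{t_i}$.

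For (d), suppose $g\in\bigcap_i\ker\chi_i$ has finite order $m\geqslant 1$. Iterating \eqref{coc} with $\chi_i(g)=1$ gives $\varphi_i(g^k)=\sum_{j=0}^{k-1}g^j\cdot\varphi_i(g)$; since $\varphi_i(e)=0$ and $g^m=e$, this yields $\sum_{j=0}^{m-1}g^j\cdot\varphi_i(g)=0$ for every $i$. I then argue by descending induction on $i$ that $g\cdot t_i=t_i$. The base $i=n$ uses $\varphi_n(g)\in K_n=k$, so the sum collapses to $m\varphi_n(g)=0$ and $\mathrm{char}\,k=0$ gives $\varphi_n(g)=0$. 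For the inductive step, $g$ fixes $K_i$ pointwise by the induction hypothesis, hence fixes $\varphi_i(g)\in K_i$, so the vanishing sum again becomes $m\varphi_i(g)=0$ and yields $\varphi_i(g)=0$. Therefore $g=e$, contradicting the assumption $g\neq e$.
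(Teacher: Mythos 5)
Your proposal is correct and follows essentially the same route as the paper: (a) and (b) by expanding $g_1\cdot(g_2\cdot t_i)$ and using the $\mathcal J(t_1,\ldots,t_n)$-stability of $K_i$, (c) by pulling back $t_i$ along the action map to obtain an element of $k(G\times\An)=k(G)(t_1,\ldots,t_n)$ that specializes to $\chi_i(g)t_i+\varphi_i(g)$, and (d) from the cocycle identity \eqref{coc} together with ${\rm char}\,k=0$. The only cosmetic difference is in (d), where the paper fixes the largest index $d$ with $\varphi_d(g)\neq 0$ and shows $g^s\cdot t_d=t_d+s\varphi_d(g)$ directly, while you run a descending induction on $i$ to force $g=e$; the ingredients are identical.
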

\begin{proof} Let $g_1, g_2\in {\mathcal J}(t_1,\ldots, t_n)$. Then \eqref{T} and \eqref{cF} yield
\begin{equation}\label{chaa}
\begin{split}
\chi_i(g_1g_2)t_i+\varphi_i(g_1g_2)&=g_1g_2\cdot t_i=g_1\cdot (g_2\cdot t_i)\\[-1pt]
&=g_1\cdot \big(\chi_i(g_2) t_i+\varphi_i(g_2)\big)\\[-1pt]
&=\chi_i(g_2)\big(\chi_i(g_1)t_i+\varphi_i(g_1)\big)+g_1\cdot \big(\varphi_i(g_2)\big).
\end{split}
\end{equation}
As the image of $\varphi_i$ lies in  the ${\mathcal J}(t_1,\ldots, t_n)$-stable field $K_i$,
\eqref{chaa} and algebraic in\-de\-pen\-dence of $t_1,\ldots, t_n$ over $k$ yield that
\eqref{coc} and $\chi_i(g_1g_2)=\chi_i(g_1)\chi_i(g_2)$ hold.
This proves (a) and (b).

(c) Let $\alpha\colon G\times \An\dashrightarrow \An$ be the natural rational action of $G$ on $\An$ and let $\beta\colon G\!\times\!\An\to G\!\times\!\An$,
$(g, a)\mapsto (g^{-1}, a)$.\;Put $S_i\!:=\!\beta^*(\alpha^*(t_i))\!\in\! k(G\!\times \!\An)$. Then $S_i(g, a)=
t_i(\alpha(\beta(g,a)))=t_i(\alpha(g^{-1},a))=
t_i(g^{-1}\cdot a)=(g\cdot t_i)(a)$ for every $(g,a)$ in the domain of definition.\;Hence, $S_i(g)=\chi_i(g)t_i+\varphi_i(g)$ for every $g\in G$.
Given that $S_i\in
k(G\times\An)=k(G)(t_1,\ldots, t_n)$ and
$\varphi_i(g)\in K_i$, this implies (c).

(d) As $g\neq e$ and $\chi_i(g)=1$ for every $i$, \eqref{T} and \eqref{cF} entail that there is $j$ such that
$\varphi_j(g)\neq 0$.\;Let $d$ be the largest $j$ with this property. Then $g\cdot f=f$ for every $f\in K_d$. As $g\cdot t_d=t_d+\varphi_d(g)$ and $\varphi_d(g)\in K_d$, this yields
\begin{equation}\label{oooo}
g^s\cdot t_d=t_d+s\varphi_d(g)\quad\mbox{for every}\;\;s\in \bf Z.
\end{equation}
 Since $\varphi_d(g)\neq 0$ and ${\rm char}\,k=0$, \eqref{oooo} implies that $g^s\neq e$ for every $s\neq 0$.\;This proves (d).
\quad $\square$ \renewcommand{\qed}{}\end{proof}

\begin{theorem} \label{solva}
Let $G$ be an affine
algebraic subgroup of
$\mathcal J(t_1,\ldots, t_n)$.\;Then $G$ is solv\-able and $\,G/G^0$ is Abelian.
\end{theorem}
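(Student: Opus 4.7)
The plan is to assemble the characters $\chi_i$ from Lemma \ref{Jo} into a single morphism of algebraic groups $\chi := (\chi_1|_G,\ldots,\chi_n|_G) \colon G \to D_n$; by Lemma \ref{Jo}(a,c) this is legitimate. The image $\chi(G)$ is a closed subgroup of $D_n$, hence diagonalizable and Abelian, so the quotient $G/N$, where $N := \ker\chi$, is Abelian. By Lemma \ref{Jo}(d), every nonidentity element of $N$ has infinite order.

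The heart of the proof is to promote this torsion-freeness to the assertion that $N$ is connected and unipotent. More generally, I would establish the following structural fact: an affine algebraic group $H$ over an algebraically closed field of characteristic zero in which every nonidentity element has infinite order is connected and unipotent. To see this, apply the Jordan decomposition to $h \in H$, writing $h = h_s h_u$ with commuting semisimple and unipotent parts lying in $H$. If $h_s \neq e$, then the Zariski closure $M$ of $\langle h_s\rangle$ is a nontrivial closed commutative subgroup of $H$ all of whose elements are semisimple, hence diagonalizable; its identity component $T$ is a torus. Either $T = \{e\}$, in which case $M$ is finite and $h_s$ itself is a nontrivial torsion element of $H$, or $\dim T > 0$, in which case $T \subseteq H$ contains a dense set of nontrivial torsion elements. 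Both options contradict the hypothesis on $H$, so every $h \in H$ is unipotent; but a group consisting entirely of unipotent elements is a unipotent algebraic group, and in characteristic zero every unipotent algebraic group is connected.

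Applied to $H = N$, this gives $N \subseteq G^0$. Consequently $G/G^0$ is a quotient of the Abelian group $G/N$, so $G/G^0$ is Abelian, while $G$ itself is an extension of the Abelian group $G/N$ by the unipotent (hence solvable) group $N$, and therefore $G$ is solvable.

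The main obstacle is the structural fact invoked in the second paragraph; once it is in hand, everything else is formal bookkeeping built on Lemma \ref{Jo}. One could alternatively proceed by induction on $n$, projecting $G$ onto the rational de Jonqui\`eres group associated to $t_2,\ldots,t_n$ and using the cocycle identity \eqref{coc} to exhibit the kernel as an extension of a subgroup of $k^\times$ by an algebraic subgroup of $K_1^+$, but I expect the Jordan-decomposition route to be the cleaner of the two.
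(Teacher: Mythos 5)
Your proposal is correct, but it exploits the torsion-freeness statement of Lemma \ref{Jo}(d) along a genuinely different route than the paper. The paper first treats the finite case by embedding a finite $G$ into $D_n$ via $\delta=(\chi_1,\ldots,\chi_n)$, then handles general $G$ by quoting the Borel--Serre lemma that a finite subgroup meets every connected component (giving $G/G^0$ Abelian), reducing solvability to $G^0$, and taking a Levi subgroup $L\subseteq G^0$: since $L'\subseteq\bigcap_i\ker\chi_i$, a nontrivial $L'$ would contain a nontrivial element of finite order, contradicting Lemma \ref{Jo}(d), so $L$ is a torus and $G^0$ is solvable. You instead apply Lemma \ref{Jo}(d) to the whole kernel $N=\ker\chi$ and prove the clean structural fact that a torsion-free affine algebraic group in characteristic zero is connected and unipotent, via Jordan decomposition (the closure of $\langle h_s\rangle$ is diagonalizable, hence either finite or containing a positive-dimensional torus, and both carry nontrivial torsion), then Kolchin's theorem and connectedness of unipotent groups in characteristic zero; this bypasses both the Borel--Serre finite-subgroup lemma and the Levi decomposition, and yields slightly more, namely that $G$ is an extension of the diagonalizable group $\chi(G)$ by a connected unipotent group. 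What the paper's route buys is that it isolates the finite case as a separate statement (reused as Corollary \ref{FFFF} and in the proof of the diagonalizability theorem) using only references already in play; what yours buys is a shorter, reusable lemma and a sharper structure statement for $G$. Three small points you should make explicit: $\chi(G)$ is closed because the image of a homomorphism of algebraic groups is a closed subgroup; every element of $\overline{\langle h_s\rangle}$ is semisimple because, after conjugation in the ambient ${\rm GL}_m$, the element $h_s$ lies in the closed diagonal subgroup, so the closure does too; and it is Kolchin's theorem that turns the closed subgroup $N$, all of whose elements are unipotent, into a unipotent algebraic group before you invoke connectedness in characteristic zero.
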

\begin{proof}
First, consider the case where $G$ is finite; we then have to prove that $G$ is Abelian.
Consider the
homomorphism
\begin{equation*}\label{hJo}
\delta\colon \mathcal J(t_1,\ldots, t_n)\to D_n,\quad g\mapsto{\rm diag}(\chi_1(g),\ldots, \chi_n(g)).
\end{equation*}
Since ${\rm ker}\,\delta=\bigcap_{i=1}^n{\rm ker}\,\chi_i$ and $G$ has no elements of infinite order,
Lemma \ref{Jo}(d) implies that
$G\cap{\rm ker}\,\delta
=\{e\}$.\;Therefore,
$\delta$ embeds $G$ into the Abelian group $D_n$; whence, the claim.

Now consider the general case.\;By
%%Ref.\;
\cite[Lemma 5.11]{BS64}, there is a finite subgroup $H$ of $G$ that intersects every connected component of $G$.\;Hence, the restriction to $H$ of the canonical homomorphism $G\to G/G^0$ is a surjective homomorphism $H\to G/G^0$.\;According
 to what we have already proved, $H$ is Abelian.\;This shows that $G/G^0$ is Abelian.\;By %%Ref.\;
 \cite[Theorem\;9.2.5]{Hal59}, the problem is then reduced to proving that $G^0$ is solvable.

 Since ${\rm char}\,k=0$, there exists  a Levi subgroup $L$  in $G^0$, see
 %%Ref.\;
 \cite[11.22]{Bor91}.\;It is  a connected reductive group and we have to show that $L$ is a torus, i.e., that the
derived subgroup $L'$ of $L$ is trivial.\;Arguing on the contrary, assume that $L'\neq \{e\}$.\;Then
$L'$ contains an element $g\neq e$ of finite order.\;Indeed,
$L'$ contains a torus $\neq\!\{e\}$ (see
%%\;Ref.\;
\cite[Cor.\;2 in Sect.\;IV.13.17 and Theorem 12.1(b)]{Bor91}), but every torus $\neq\!\{e\}$ has a nontrivial torsion (see
%%\;Ref.\;
\cite[Prop.\;8.9(d)]{Bor91}).\;On the other hand, $L'\subseteq \bigcap_{i=1}^n{\rm ker}\,\chi_i$ as every homomorphism $L\to k^{\times}$ contains $L'$ in its kernel.\;By Lemma \ref{Jo}(d) this entails that the order of $g$ is infinite.\;This contradiction completes the\break  proof.
\quad $\square$ \renewcommand{\qed}{}\end{proof}

\begin{corollary}\label{FFFF}
Every finite
subgroup
of
$\,\mathcal J(t_1,\ldots, t_n)$ is Abelian.
\end{corollary}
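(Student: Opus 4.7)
The plan is to derive this directly from Theorem \ref{solva}, since a finite subgroup of $\mathcal{C}_n$ is in particular a zero-dimensional affine algebraic subgroup. For such a $G$, the identity component $G^0$ is trivial, so $G = G/G^0$; Theorem \ref{solva} then asserts precisely that this quotient is Abelian. This reduction is the whole content, and there is essentially no obstacle to overcome.

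Alternatively, and perhaps more transparently, I would extract the argument directly from the first paragraph of the proof of Theorem \ref{solva}. Consider the homomorphism
\begin{equation*}
\delta\colon \mathcal{J}(t_1,\ldots,t_n)\to D_n,\qquad g\mapsto \operatorname{diag}(\chi_1(g),\ldots,\chi_n(g)),
\end{equation*}
whose kernel is $\bigcap_{i=1}^n \ker\chi_i$. By Lemma \ref{Jo}(d), every nontrivial element of this kernel has infinite order. Since $G$ is finite, $G \cap \ker\delta = \{e\}$, so $\delta|_G$ embeds $G$ into the Abelian group $D_n$, whence $G$ is itself Abelian.

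The only point to check, if one wishes to invoke Theorem \ref{solva} as a black box, is that a finite subgroup of $\mathcal{C}_n$ is indeed an algebraic subgroup in the sense defined in the introduction; but this is immediate, as any finite subset of $\mathcal{C}_n$ is closed in the Zariski topology and carries the unique structure of a zero-dimensional algebraic group with trivial identity component. Either route yields the corollary in a single line of argument once the preceding machinery is in place.
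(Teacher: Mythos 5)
Your proposal is correct and matches the paper's reasoning: the corollary is stated there as an immediate consequence of Theorem \ref{solva} (for finite $G$ one has $G=G/G^0$), and the first paragraph of the proof of Theorem \ref{solva} is exactly your alternative argument, embedding $G$ into $D_n$ via $\delta$ using Lemma \ref{Jo}(d). Both of your routes are the paper's own, so nothing further is needed.
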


\begin{theorem} Let $\,G$ be a reductive algebraic subgroup of $\,\mathcal J(t_1,\ldots, t_n)$. Then $\,G$ is a diagonalizable group.
\end{theorem}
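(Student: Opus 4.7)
The plan is to combine Theorem \ref{solva} with the homomorphism $\delta$ already used in its proof and to show that $\delta$ actually embeds $G$ into $D_n$.

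First I would apply Theorem \ref{solva} to deduce that $G$ is solvable. Since $G$ is reductive and $G^0$ is connected solvable reductive, $G^0$ must be a torus (a connected solvable reductive group has no positive-dimensional unipotent normal subgroup and trivial semisimple part, so it equals its central torus). Recall the homomorphism
\begin{equation*}
\delta\colon \mathcal J(t_1,\ldots, t_n)\to D_n,\qquad g\mapsto {\rm diag}(\chi_1(g),\ldots, \chi_n(g)),
\end{equation*}
whose kernel is $\bigcap_{i=1}^n \ker \chi_i$. The heart of the argument will be to prove that $\delta|_G$ is injective; since in characteristic zero an injective morphism of algebraic groups is a closed embedding, this will exhibit $G$ as a closed subgroup of $D_n$ and hence establish diagonalizability.

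For the injectivity, set $N:=G\cap \ker\delta$. Because $N^0$ is a closed connected subgroup of $G^0$, and $G^0$ is a torus, $N^0$ is itself a torus. If $N^0\neq\{e\}$, it would contain nontrivial elements of finite order (e.g.\ $2$-torsion), contradicting Lemma \ref{Jo}(d), which says that every nonidentity element of $\ker\delta$ has infinite order. Therefore $N^0=\{e\}$, i.e.\ $N$ is finite. But then every element of $N$ has finite order, and Lemma \ref{Jo}(d) forces $N=\{e\}$.

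Thus $\delta|_G$ has trivial kernel, so $G$ is isomorphic to a closed subgroup of $D_n$ and is diagonalizable. The only place that could present any friction is the verification that $G^0$ is a torus, but this is immediate from reductivity together with the solvability guaranteed by Theorem \ref{solva}; the rest is a direct torsion-counting argument using Lemma \ref{Jo}(d), so I do not anticipate any serious obstacle.
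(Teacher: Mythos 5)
Your proposal is correct, but it takes a genuinely different route from the paper's. The common first step is the same: Theorem \ref{solva} gives solvability, and reductivity then forces $G^0$ to be a torus. From there the paper does not use $\delta$ at all: it takes a finite subgroup $H$ of $G$ meeting every connected component (Borel--Serre), uses density of torsion in the torus $G^0$ together with Corollary \ref{FFFF} to show that $H$ centralizes $G^0$, concludes that $G$ is abelian, and then cites Borel to pass from ``abelian and reductive'' to diagonalizable. You instead extend to all of $G$ the $\delta$-embedding trick that the paper applies only to finite subgroups inside the proof of Theorem \ref{solva}: setting $N:=\ker(\delta|_G)$, which is a closed subgroup because $\chi_i|_G$ is regular by Lemma \ref{Jo}(c), you note that $N^0$ is a subtorus of $G^0$ and so would contain nontrivial torsion if it were nontrivial, contradicting Lemma \ref{Jo}(d); hence $N$ is finite and then trivial by the same lemma, and $G$ embeds as a closed subgroup of $D_n$ since an injective homomorphism of algebraic groups in characteristic zero is an isomorphism onto its (closed) image. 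Your argument is shorter and avoids both the Borel--Serre finite-subgroup lemma and the torsion-density step, with reductivity entering exactly where it must, namely to exclude a unipotent piece of the kernel; the paper's route produces commutativity of $G$ as an explicit intermediate statement and runs parallel to its treatment of the finite case, but it proves nothing beyond what your embedding gives.
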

\begin{proof}
By Theorem \ref{solva} the reductive group $G^0$ is solvable.\;Hence, $G^0$ is a torus.\;Let $H$ be the subgroup of $G$ from the proof of Theorem \ref{solva}.\;It acts on $G^0$ by conjugation because $G^0$ is normal in $G$.\;The fixed point set $F$ of this action is a closed subgroup  of $G^0$.\;Assume that $F\neq G^0$.\;Then, since the torsion subgroup of $G^0$ is dense in $G^0$ (see
%%Ref.\;
\cite[Cor.\;III.8.9]{Bor91}), there exists an element $g\in G^0\setminus F$ whose order is finite.\;Let $S$ be the subgroup of $G^0$ generated by the set $\{hgh^{-1}\mid h\in H\}$.\;Since $G^0$ is Abelian and the orders of $g$ and $H$ are finite, $S$ is finite as well.\;Since $S$ is stable with respect to the action of $H$ on $G^0$ by conjugation, this implies that the subgroup generated by $S$ and $H$ is finite, too.\;Corollary \ref{FFFF} then yields that
this subgroup is Abelian.\;Hence, $g\in F$\,---\,a contradiction.\;Therefore, $F=G^0$, i.e., $G^0$ and $H$ commute.\;Since the Abelian groups $H$ and $G^0$ generate $G$, this implies that $G$ is Abelain.\;The claim then follows  by
%%Ref.\;
\cite[Prop.\,III.8.4(4) and Cor.\;III.4.4(1)]{Bor91}.\quad $\square$ \renewcommand{\qed}{}\end{proof}

\section{Affine subspaces as cross-sections}\label{cros}

By
%%Ref.\;
\cite[Theorem\;10]{Ros56}, for every rational action of a connected solvable algebraic group
there exists a rational cross-section.\;The next theorem refines this for some rational actions on $\An$ by showing that there exist cross-sections that are affine subspaces of\;$\An$.

\begin{theorem}\label{unitri}
Let $G\neq \{e\}$ be a unipotent affine algebraic subgroup of the group $\mathcal J(x_1, \ldots , x_n)$ and let $\alpha$ be the corresponding rational action of $\,G$ on $\An$.\;Then there exist  a sequence $1\leqslant i_1<\cdots<i_m\leqslant n$ of natural numbers and a sequence $\Theta_1,\ldots, \Theta_m$ of nonempty open subsets  of $\,k$\;such that
for every $(c_1,\ldots, c_m)\in \Theta_1\times\cdots\times\Theta_m$ the affine subspace of $\An$ defined by the equations $\br$see \eqref{coord}$\bl{\rm:}$
 \begin{equation*}
 x_{i_1}=c_1, \ldots, x_{i_m}=c_m,
 \end{equation*}
 is a rational cross-section for $\alpha$.
\end{theorem}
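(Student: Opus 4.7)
The plan is to induct on $n$, at each stage reducing $G$ by cutting along a single coordinate hyperplane of $\mathbf{A}^n$. Since $G$ is unipotent, every character $\chi_i|_G$ of Lemma \ref{Jo}(a) is trivial, so by \eqref{T} one has $g\cdot x_i=x_i+\varphi_i(g)$ with $\varphi_i(g)\in K_i$ for every $g\in G$. Because $G\neq\{e\}$, Lemma \ref{Jo}(d) ensures that some $\varphi_i|_G$ is nontrivial; let $d$ be the \emph{largest} such index. Then $G$ fixes $x_{d+1},\ldots,x_n$, hence $K_d$ pointwise, and the cocycle identity \eqref{coc} with $\chi_d=1$ collapses to additivity: $\varphi_d\colon G\to K_d^+$ is a nontrivial group homomorphism. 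Set $i_m:=d$ and $G_1:=\ker\varphi_d$, a closed normal codimension-one (still connected unipotent) subgroup of $G$.

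Since $G_1$ fixes $x_d,x_{d+1},\ldots,x_n$, it stabilizes every hyperplane $H_c:=\{x_d=c\}$. Using the rational functions $F_i\in k(G)(x_{i+1},\ldots,x_n)$ from Lemma \ref{Jo}(c), the substitution $x_d\mapsto c$ is well defined for $c$ in some nonempty open $\Theta_m\subset k$, and yields a faithful rational action of $G_1$ on $H_c\cong\mathbf{A}^{n-1}$. Since for $i\neq d$ the specialized formulas $g\cdot x_i=x_i+\varphi_i(g)|_{x_d=c}$ keep $\varphi_i(g)|_{x_d=c}$ in the corresponding subfield, this restricted action lies in $\mathcal{J}(x_1,\ldots,x_{d-1},x_{d+1},\ldots,x_n)$. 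If $G_1=\{e\}$, then $H_{c_m}$ itself is the desired cross-section (so $m=1$). Otherwise the inductive hypothesis produces indices $i_1<\cdots<i_{m-1}$ and open sets $\Theta_1,\ldots,\Theta_{m-1}\subset k$ such that
\begin{equation*}
S:=H_{c_m}\cap\{x_{i_1}=c_1\}\cap\cdots\cap\{x_{i_{m-1}}=c_{m-1}\}
\end{equation*}
is a rational cross-section for the $G_1$-action on $H_{c_m}$. Because $G_1$ fixes $x_j$ for every $j>d$, necessarily $i_k<d=i_m$ for $k<m$, so the ordering $i_1<\cdots<i_m$ is automatic.

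To verify that $S$ is a rational cross-section for the full $G$-action, fix a generic point $p\in\mathbf{A}^n$ and observe that evaluating $\varphi_d$ at the tail $(x_{d+1}(p),\ldots,x_n(p))$ (with Weil's regularization of the resulting rational homomorphism) produces a nontrivial homomorphism of connected algebraic groups $G\to k^+$, hence surjective. Its fiber over $c_m-x_d(p)$ is a coset $g_0G_1$, and by normality of $G_1$ the intersection $(G\cdot p)\cap H_{c_m}$ equals the single $G_1$-orbit $G_1\cdot(g_0\cdot p)$, which by the inductive hypothesis meets $S$ in exactly one point; hence so does $G\cdot p$.

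The main technical obstacle is ensuring that the open sets $\Theta_1,\ldots,\Theta_{m-1}$ delivered by induction can be chosen \emph{uniformly} as $c_m$ varies over $\Theta_m$, so that the statement of the theorem holds over a full product $\Theta_1\times\cdots\times\Theta_m$ rather than only an arbitrary open subset of $k^m$. The fix is to strengthen the inductive statement to a parametric version: the restrictions $\{G_1|_{H_c}\}_{c\in\Theta_m}$ form an algebraic family, so the ``bad'' locus in $k^m$ is constructible and, by the inductive construction, is cut out by finitely many closed conditions of the form $c_k\in B_k$ with $B_k\subsetneq k$; its complement then contains the desired product of open subsets of $k$.
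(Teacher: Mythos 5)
There is a genuine gap, and it sits at the heart of your reduction step. You set $G_1:=\ker\varphi_d$ and assert that it is a closed normal subgroup of \emph{codimension one}, so that a generic $G$-orbit meets the hyperplane $\{x_d=c\}$ in a single $G_1$-orbit. But $\varphi_d$ is a homomorphism into $K_d^{+}$, the additive group of the field $k(x_{d+1},\ldots,x_n)$, which is an infinite-dimensional $k$-vector space: its kernel can have arbitrary codimension, and can even be trivial while $\dim G>1$. What your evaluation argument actually gives is that, for a fixed generic point $p$, the homomorphism $g\mapsto\varphi_d(g)(x_{d+1}(p),\ldots,x_n(p))$ has a codimension-one kernel $K_p$, and $(G\cdot p)\cap\{x_d=c\}$ is a single $K_p$-orbit; but $K_p$ depends on $p$ and in general strictly contains $G_1$, so identifying this intersection with a $G_1$-orbit is unjustified, and the verification that $S$ is a cross-section collapses. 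A concrete counterexample to your recipe: let $n=3$ and let $G\cong (k^{+})^2$ act by $(u,v)\cdot x_1=x_1+v$, $(u,v)\cdot x_2=x_2+u+vx_3$, $(u,v)\cdot x_3=x_3$; this is a unipotent algebraic subgroup of $\mathcal J(x_1,x_2,x_3)$ with $d=2$ and $\varphi_2(u,v)=u+vx_3$ injective, so your $G_1$ is trivial and your algorithm stops with $m=1$ and proposed cross-section $\{x_2=c\}$. But the generic orbit is the whole plane $\{x_3=a_3\}$, which meets $\{x_2=c\}$ in a line; the correct cross-sections here have codimension two, e.g. $\{x_1=c_1,\;x_2=c_2\}$.

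This is precisely why the paper's induction is organized differently: instead of cutting $G$ by the kernel of $\varphi_d$, it first chooses a one-dimensional \emph{normal subgroup} $U\cong k^{+}$ of $G$, uses Lemma \ref{add} (via the cocycle identity \eqref{coc}) to write $u\cdot x_d=x_d+us$ with $0\neq s\in K_d$, shows that $\{x_d=c\}$ is a rational cross-section for $\alpha|_U$ alone, and then inducts on $\dim G$ with the \emph{quotient} group $H=G/U$ acting on $L_c\cong{\bf A}^{n-1}$ through the identification $k(L_c)\cong k({\bf A}^n)^U$. The technical heart of that proof, for which your proposal has no substitute, is the explicit computation showing that this induced $H$-action is again contained in a rational de Jonqui\`eres subgroup of $\mathcal C_{n-1}$, so the induction can continue; your shortcut of restricting a subgroup of $G$ to the hyperplane avoids that computation but loses exactly the directions of $G$ that $\ker\varphi_d$ fails to see. (Your uniformity discussion for the sets $\Theta_j$ is also only a sketch, but that issue is secondary; in the paper's scheme the parameter $c$ enters the inductive data only through the de Jonqui\`eres form of the quotient action, so no separate constructibility argument is needed.)
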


For the proof of Theorem \ref{unitri} we need the following

\begin{lemma}\label{add} Let $K$ be a field of characteristic $0$ and let $f(x)$ be a rational function in a variable $x$ with the coefficients in $K$.\;Let $K'$ be a subfield of $K$.\;If
\begin{equation}\label{ab}
f(a_1+a_2)=f(a_1)+f(a_2)
\end{equation}
whenever $f$ is defined at $a_1, a_2$ and $a_1+a_2\in K'$, then there is an element
$c\in K$ such that $f(x)=cx$.
\end{lemma}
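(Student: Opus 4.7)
The plan is to first promote the constrained hypothesis to a universal rational-function identity in one variable, and then analyse that identity by elementary pole-counting and degree arguments. Concretely, I would fix any $a\in K$ at which $f$ is defined, and let $y$ range over $K'$, avoiding the finitely many values at which $y$ or $y-a$ is a pole of $f$. Taking $a_1:=y-a$ and $a_2:=a$ gives $a_1+a_2=y\in K'$, so the hypothesis yields $f(y)=f(y-a)+f(a)$. Because $\mathrm{char}\,K=0$ forces $K'\supseteq\mathbb{Q}$, the subfield $K'$ is infinite; hence the rational function $y\mapsto f(y)-f(y-a)-f(a)$ over $K$ vanishes on an infinite subset of $K$ and therefore identically. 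After the translation $y\mapsto y+a$ this yields
\begin{equation}\label{eq:Cauchyrat}
f(y+a)=f(y)+f(a)
\end{equation}
as rational functions of $y$ over $K$, for every $a$ in a cofinite subset of $K$.

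Next I would exploit \eqref{eq:Cauchyrat} to conclude that $f$ is a polynomial of degree at most one with zero constant term. If $f$ had a pole at some $\alpha\in\overline{K}$, then for all but finitely many $a\in K$ the shift $\alpha-a$ is not a pole of $f$, so $f(y+a)$ is regular at $y=\alpha$ while $f(y)$ is not; thus $f(y+a)-f(y)$ is singular at $y=\alpha$, contradicting the constancy in $y$ of the right-hand side of \eqref{eq:Cauchyrat}. Hence $f$ is a polynomial. Writing $f(y)=\sum_{i=0}^{n}c_iy^i$ with $c_n\neq0$, the coefficient of $y^{n-1}$ in $f(y+a)-f(y)$ equals $nc_na$; since this polynomial in $y$ must be constant, and $\mathrm{char}\,K=0$ lets us pick $a\neq0$ in the cofinite set of \eqref{eq:Cauchyrat}, we obtain $n\leqslant1$. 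Then \eqref{eq:Cauchyrat} becomes $c_1a=c_1a+c_0$, so $c_0=0$ and $f(x)=cx$ with $c:=c_1\in K$.

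The main obstacle is the thinness of the hypothesis: the constraint $a_1+a_2\in K'$ cuts out only a union of cosets of $K'$ in $K\times K$, which need not be Zariski dense. The decisive observation is that fixing $a_2\in K$ and letting $a_1$ sweep the single coset $K'-a_2$ already furnishes an infinite subset of $K$ (because $K'$ is infinite in characteristic zero), and this is enough to force the one-variable rational-function identity \eqref{eq:Cauchyrat}; from there, the pole and leading-coefficient analysis is routine.
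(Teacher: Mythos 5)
Your proof is correct, but after a common first step it takes a genuinely different route from the paper's. Both arguments begin by promoting the constrained hypothesis to an identity of rational functions: the paper specializes at all pairs in $\mathbf{A}^2(K')$ (both entries in $K'$, so the sum is automatically in $K'$) and uses Zariski density of $\mathbf{A}^2(K')$ to get the two-variable identity $f(x_1+x_2)=f(x_1)+f(x_2)$ in $K(x_1,x_2)$, whereas you fix $a\in K$ and let the sum run over $K'$, obtaining the one-variable identity $f(y+a)=f(y)+f(a)$ for every $a$ in a cofinite subset of $K$; both devices are legitimate. The real divergence is in the second half: the paper regards $f$ as a partially defined rational endomorphism of $\overline{K}{}^+$ and invokes Weil's theorem that such a map is an everywhere defined algebraic homomorphism, hence $f\in K[x]$ and, in characteristic zero, $f=cx$; you instead argue by hand, ruling out poles of $f$ in $\overline{K}$ by comparing the singularities of $f(y+a)-f(y)$ with the constant $f(a)$, then bounding the degree via the coefficient of $y^{\hskip .2mm n-1}$ and killing the constant term. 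Your route is more elementary and self-contained (no appeal to Weil's regularization theorem), at the cost of some bookkeeping; the paper's is shorter modulo the cited theorem and places the lemma in the conceptual setting of algebraic-group homomorphisms. Two cosmetic points: in your pole argument the relevant condition is that $\alpha+a$ (not $\alpha-a$) avoid the poles of $f$, since the poles of $y\mapsto f(y+a)$ are those of $f$ shifted by $-a$, and $a$ should also be chosen in the cofinite set where your identity holds; and the trivial case $f=0$, where the normalization $c_n\neq 0$ is unavailable, should be dispatched separately with $c=0$, as the paper does at the outset.
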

\begin{proof}[Proof of Lemma {\rm \ref{add}}] We may (and shall) assume that $f\neq 0$.
Let $\overline{K}$ be an algebraic closure of $K$.\;First, we claim that \eqref{ab} holds
whenever $f$ is defined at $a, b$ and $a+b\in \overline{K}$. Indeed,
by \eqref{ab} the rational function
$F(x_1, x_2):=f(x_1)+f(x_2)-f(x_1+x_2)$ (see \eqref{coord})
vanishes at every point of ${\bf A}^2(K')$ where it is defined.\;Since ${\bf A}^2(K')$ is Zariski dense
in ${\bf A}^2$, this yields $F=0$; whence the claim.

Thus,  $f$
%%\colon \overline{K}\dasharrow\overline{K}$
is a rational partially defined endomorphism of the algebraic group $\overline{K}^+\!.$\;But by %%Ref.\;
\cite{Wei55} (cf.\;also
%%\;Ref.\;
\cite[Sect.\;11.1.1]{Mer80}) every rational partially defined homomorphism of algebraic groups is, in fact, an everywhere defined algebraic homomorphism.\;This entails that $f(x)\in K[x]$.\;Since $f$ has only finitely many roots, ${\rm ker}f$ is finite.\;Therefore, $f(\overline{K}^+)$ is a one-dimensional closed subgroup of $\overline{K}^+$; whence $f(\overline{K}^+)=\overline{K}^+$.\;On the other hand,  since ${\rm char}\,K=0$, there are no nonzero elements of finite order in $\overline{K}^+$.\;Hence, ${\rm ker}\,f=\{0\}$.\;Thus, $f$ is an isomorphism; whence
the claim.
%%This proves the claim.
\quad $\square$ \renewcommand{\qed}{}\end{proof}

\begin{proof}[Proof of Theorem {\rm\ref{unitri}}] We shall use the notation of
\eqref{ff},
%%\eqref{T},
\eqref{cF} with
\begin{equation*}
t_1=x_1,\ldots, t_n=x_n.
\end{equation*}

Since  ${\rm char}\,k=0$, $G$ is connected.\;As $G$ is a nontrivial unipotent group,
it contains a one-dimensional normal subgroup $U$
isomorphic to $k^{+}$.\;We  identify
%%, see \cite{Bor91}.\mnote{more precise\\ reference}
$U$ with $k^+$ by an isomorphism $U\to k^+$.\;Since $G$ is unipotent, there are no nontrivial algebraic homomorphisms $G\to k^{\times}$,
%%\mnote{reference?}
therefore, by Lem\-ma\;\ref{Jo} there are rational functions $F_i\in k(G\times \An)$ such that
\begin{equation}\label{U}
\begin{split}
g\cdot x_i &=x_i + F_i(g),\\
F_i(g)&\in K_i,
\end{split}
\qquad \mbox{for every $g\in G$ and $i$}.
\end{equation}

 Since $U\neq \{e\}$, \eqref{U} entails that $F_j(u)\neq 0$ for some $u\in U$ and $j$. Let $d$ be the largest $j$ appearing in this fashion. Then \eqref{U} and \eqref{ff} yield
\begin{equation}\label{Ud}
K_d^U=K_d.
\end{equation}
In turn, from \eqref{Ud} and \eqref{coc} we infer that
\begin{equation*}\label{charr}
F_d(u_1+u_2)=F_d(u_1)+F_d(u_2)\qquad \mbox{for all\; $u_1, u_2\in U$}.
\end{equation*}
By Lemma \ref{add},
%%\eqref{charr}
this implies that
there is a nonzero element $s\in K_{d}$ such that
\begin{equation}\label{fd}
F_d(u)=us\qquad\mbox{for every \;$u\in U$}.
\end{equation}
Thus, by \eqref{U} and \eqref{fd},
\begin{equation}\label{ddddd}
u\cdot x_d=x_d + us,\qquad\mbox{for every \;$u\in U$}.
\end{equation}

By
%%Ref.\;
\cite[Theorem 1]{Ros56}, there exists a nonempty open subset $\An_0$ of $\An$ and its embedding in an irreducible variety $Y$,
\begin{equation*}
Y\hookleftarrow \An_0\subseteq \An,
\end{equation*}
such that the rational action of $\,U$ on $Y$ determined by $\alpha|_U$ and by this embedding is regular.\;We identify $\An_0$ with the image of this embedding.\;By
%%Ref.\;
\cite[Theorem 2]{Ros56}, shrinking $Y$ if necessary, we may (and shall) assume that
 there exists a geometric quotient of $Y$ by this action of $U$,
 \begin{equation*}\label{pii}
 \pi^{}_{U, Y}\colon Y\to Y/U.
 \end{equation*}
 Then $\pi^{}_{U, Y}|^{}_{\An_0}$ is the restriction to $\An_0$ of
 a rational quotient for $\alpha|_U$,
   \begin{equation*}
   \pi^{}_{U,\An}\colon \An\dashrightarrow Y/U=:\An\dss U.
    \end{equation*}

    Let $H:=G/U$. Then $\alpha$ induces a rational action $\beta$ of $H$ on $Y/U$. Consider a rational quotient for $\beta$,
\begin{equation*}
\pi^{}_{H, Y/U}\colon Y/U\dashrightarrow (Y/U)\dss H.
\end{equation*}
  Then the composition
 \begin{equation*}
 \pi^{}_{G,\An}:=\pi^{}_{H, Y/U}\circ \pi^{}_{U,\An}
  \end{equation*}
  is a rational quotient for $\alpha$,
   \begin{equation*}
   \pi^{}_{G,\An}\colon \An\dashrightarrow (Y/U)\dss H=:\An\dss G.
    \end{equation*}

Shrinking $\An_0$ and $Y$ if necessary, we may (and shall) assume that
\begin{enumerate}[\hskip 3mm\rm(i)]
\item  $\pi^{}_{H, Y/U}$ is a morphism;
%%\mnote{check (i),\\ really needed?}
\item $s|^{}_{\An_0}$
is regular and vanishes nowhere.
\end{enumerate}

    To sum up, we have the following commutative diagram:
     \begin{equation}\label{diagra}
     \begin{matrix}
  \xymatrix@C=-4.5mm@R=10mm{
  Y\!\ar[drr]_{\pi^{}_{U, Y}}&\hskip 3mm\supseteq \hskip -10mm \!&\An_0&\hskip -10mm\subseteq\hskip 3mm&\hskip -1mm\An\ar@{-->}[dll]^{\pi^{}_{U,\An}}\ar@{-->}@/^2.3pc/[ddll]^{\pi^{}_{G,\An}}\\
  &&Y/U=\An\dss U\ar@{->}[d]_{\pi^{}_{H, Y/U}}&&\\
 &&(Y/U)\dss H=\An\dss G&&
  }
  \end{matrix}
  \end{equation}

For every element $c\in k$, denote by $L_c$ the hyperplane in $\An$ defined by the equation $x_d=c$.
The set
\begin{equation*}
\Omega:=\{c\in k\mid L_c\cap \An_0\neq\varnothing\}
\end{equation*}
is nonempty and open in $k$.\;Take an element $c\in \Omega$ and a point $a\in \An_0$. By property (ii) above, $s$ is regular and does not vanish at $a$. Consider the $U$-orbit of $a$ in $Y$. Formula \eqref{fd} shows that there is a unique $u_0\in U$ such that the value of $x_d\in k(Y)$ at $u_0\cdot a$ is $c$, namely,
\begin{equation}\label{param}
u_0=\frac{x_d-c}{s}(a).
\end{equation}
This means that
every $U$-orbit in $Y$ intersects $L_c\cap \An_0$ at most at one point, i.e., $\pi_{U, Y}|^{}_{L_c\cap \An_0}$ is injective.\;Since  $\dim L_c\cap \An_0=\dim Y/U$ and ${\rm char}\,k=0$,
this implies that
$\pi_{U, Y}|^{}_{L_c\cap \An_0}\colon L_c\cap \An_0\to Y/U$
 is a birational isomorphism.  Hence, $L_c$ intersects the domain of definition of
 $\pi^{}_{U,\An}$ and
 \begin{equation}\label{HHH}
\pi^{}_{U,\An}|^{}_{L_c}\colon L_c\dashrightarrow Y/U=\An\dss U,
 \end{equation}
 is a birational isomorphism. This means that $L_c$ is a rational cross-section for
 $\alpha|_U$.\;In particular, this
 %%\mnote{check!}
 implies that shrinking $\An_0$ if necessary, we may (and shall) assume that
 \begin{enumerate}[\hskip -4mm\rm(iii)]
\item for every point of $\An_0$, its $U$-orbit in $Y$  intersects $L_c$.
\end{enumerate}

Now we argue by induction on $\dim G$. If $\dim G=1$, then $G=U$ and the claim is proved
 since every $L_c$ for $c\in \Omega$ is a rational cross-section for $\alpha$ (so in this case $s=1$, $i_1=d$ and $\Theta_1=\Omega$).

 Now assume that $\dim G>1$.\;The action $\beta$ and the birational isomorphism \eqref{HHH}
determine a rational action $\gamma$ of $H$ on $L_c$ such that \eqref{HHH} becomes an $H$-equivariant birational isomorphism. From \eqref{diagra} we then deduce that
\begin{equation*}
\pi_{G,\An}|^{}_{L_c}\colon L_c\dashrightarrow \An\dss G
\end{equation*}
is a rational quotient for $\gamma$.
We identify $L_c$ with ${\bf A}^{n-1}$ by means of the isomorphism
$(a_1,\ldots, a_{d-1}, c, a_{d+1},\ldots, a_n)\mapsto (a_1,\ldots, a_{d-1}, a_{d+1},\ldots, a_n)$ and, for every function $f\in k(\An)$ whose domain of definition intersects $L_c$, put
\begin{equation*}
\overline{f\,}:=f|^{}_{L_c}\in k(L_c).
\end{equation*}
Then $\overline{x_1},\ldots,\overline{x_{d-1}},\overline{x_{d+1}},\ldots,\overline{x_n}$ are the standard coordinate functions on $L_c$.

We claim that the image of $H$ in ${\rm Aut}_kk(L_c)={\mathcal C}_{n-1}$ determined by the action $\gamma$ is contained in ${\mathcal J}(\overline{x_1},\ldots,\overline{x_{d-1}},\overline{x_{d+1}},\ldots,\overline{x_n})$. If this is proved, then, by the inductive assumption,   there exist  a nonempty set of indices $\,i_1,\ldots, i_r$ and a nonempty open subsets $\Theta_1,\ldots, \Theta_r$ of $\,k$\;such that
for every $(c_1,\ldots, c_r)\in \Theta_1\times\cdots\times\Theta_r$ the affine subspace $S$ of $L_c$ defined by the equations
 \begin{equation*}
 \overline{x_{i_1}}=c_1, \ldots, \overline{x_{i_r}}=c_r,
 \end{equation*}
 is a rational cross-section for $\gamma$, i.e.,
 \begin{equation*}
 \pi_{G, \An}|^{}_S\colon S\dashrightarrow \An\dss G
 \end{equation*}
 is a birational isomorphism.\;As $S$ is an affine subspace in $\An$ defined by the equations
 $x_d=c, x_{i_1}=c_1, \ldots, x_{i_r}=c_r$, this will complete the proof.

 It remains to prove the claim.\;To this end, consider in $k(\An)$ the subfield $k(\An)^U$
 of $U$-invariants elements with respect to
 $\alpha|_U$.\;Since $L_c$ is a rational cross-section of $\pi_{U, \An}$,
 the map
 \begin{equation*}\label{restri}
 k(\An)^U\to k(L_c),\qquad f\mapsto \overline{f\,},
 \end{equation*}
 is a well-defined
 $k$-isomorphism of fields
 that is
 $H$-equivariant with respect to
 the actions of $H$ on $k(\An)^U$  and $k(L_c)$ determined resp.\;by $\alpha$ and
 $\gamma$.
 Let
 \begin{equation*}
k(L_c)\to   k(\An)^U, \qquad t\mapsto\widehat t,
 \end{equation*}
 be the inverse isomorphism. Below we will consider $\beta$ and $\gamma$ as the actions of
 $G$ with the kernel $U$.

Take a point $a\in \An_0$.\;By the above discussion and property (iii), the $U$-orbit of $a$ in $Y$ intersects $L_c$ at a single point $u_0\cdot a$ where $u_0$ is given by \eqref{param}. As
$\widehat{\overline{x_i}\,}\in k(\An)^U$, this yields
\begin{equation}\label{==}
\widehat{\overline{x_i}\,}(a)=\widehat{\overline{x_i}\,}(u_0\cdot a)=\overline{x_i}(u_0\cdot a)=x_i(u_0\cdot a)=((-u_0)\cdot x_i)(a).
\end{equation}

Let $z, y_1,\ldots,y_{n-1}$ be the variables over $k$.\;It follows from \eqref{U}, \eqref{param} and  \eqref{==} that there are $d-1$ rational functions
\begin{equation*}
R_j(z, y_j, y_{j+1},\ldots, y_{n-1})\in k(z, y_j, y_{j+1},\ldots, y_{n-1}), \qquad j=1,\ldots, d-1,
 \end{equation*}
such that
\begin{equation}\label{R}
\widehat{\overline{x_i}\,}=
\begin{cases}x_i+R_i\Big(\displaystyle\frac{c-x_d}{s}, x_{i+1},\ldots, x_n\Big) &\mbox{if}\;\; i\leqslant d-1,\\
x_i&\mbox{if}\;\;i\geqslant d+1.
\end{cases}
\end{equation}
In turn, from \eqref{R}, \eqref{U}, \eqref{ff} and \eqref{ifl}  we infer that
\begin{equation*}\label{KKK}
g\cdot \widehat{\overline{x_i}\,}-x_i\in k(x_{i+1},\ldots, x_{d-1}, x_{d}, x_{d+1},\ldots, x_{n})\quad\mbox{for all $g\in G$}\;\;\mbox{and}\;\;i;
\end{equation*}
whence,
\begin{equation}\label{KK}
g\cdot \overline{x_i}-\overline{x_i}\in k(\overline{x_{i+1}},\ldots, \overline{x_{d-1}}, \overline{x_{d}}, \overline{x_{d+1}},\ldots, \overline{x_{n}})
\quad\mbox{for all $g\in G$}\;\;\mbox{and}\;\;i.
\end{equation}

The claim now follows from  \eqref{KK} because $\overline{x_{d}}=c\in k$.
\quad $\square$ \renewcommand{\qed}{}\end{proof}

\begin{corollary}\label{us}
For every unipotent algebraic subgroup $G$ of $\,{\rm GL}_n$, there exists an affine subspace $L$ of $\An$ such that $L$  is a rational cross-section for the natural action of $\,G$ on $\An$.
\end{corollary}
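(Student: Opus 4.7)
The plan is to reduce immediately to Theorem \ref{unitri} via a linear change of coordinates. First I would invoke Example \ref{LK}: since $G$ is a unipotent algebraic subgroup of $\mathrm{GL}_n$, it is connected (as $\mathrm{char}\,k=0$) and solvable, so by the Lie--Kolchin theorem there exists $h\in\mathrm{GL}_n$ such that $hGh^{-1}$ is contained in the de Jonqui\`eres subgroup $\mathcal{J}(x_1,\ldots,x_n)$ (with respect to the standard coordinates).

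Next, I would apply Theorem \ref{unitri} to $hGh^{-1}$: this yields indices $1\leqslant i_1<\cdots<i_m\leqslant n$ and scalars $c_1,\ldots,c_m\in k$ such that the affine subspace
\begin{equation*}
L':=\{a\in\mathbf{A}^n\mid x_{i_1}(a)=c_1,\ldots,x_{i_m}(a)=c_m\}
\end{equation*}
is a rational cross-section for the natural rational action of $hGh^{-1}$ on $\mathbf{A}^n$. Then I would set $L:=h^{-1}(L')$. Since $h\in\mathrm{GL}_n$ acts on $\mathbf{A}^n$ by an invertible affine-linear map (in fact linear), $L$ is again an affine subspace of $\mathbf{A}^n$; concretely, writing $h^{-1}$ as $(y_1,\ldots,y_n)\mapsto(\sum_j\beta_{1j}y_j,\ldots,\sum_j\beta_{nj}y_j)$, the subspace $L$ is cut out by the linear equations $\sum_j\beta_{i_kj}y_j=c_k$ for $k=1,\ldots,m$.

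Finally I would check that $L$ is a rational cross-section for $G$: the birational automorphism $\tau\colon\mathbf{A}^n\dashrightarrow\mathbf{A}^n$, $a\mapsto h\cdot a$, is $G$-equivariant when the source carries the $G$-action and the target carries the $(hGh^{-1})$-action, so it descends to a birational isomorphism on rational quotients, and the composition $\pi_{G,\mathbf{A}^n}|_L=\pi_{hGh^{-1},\mathbf{A}^n}|_{L'}\circ\tau|_L$ is a birational isomorphism onto $\mathbf{A}^n\dss G$. The only non-routine point is the underlying Theorem \ref{unitri}, which has already been proved; given that, nothing substantive remains, so I do not expect a real obstacle here.
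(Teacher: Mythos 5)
Your proposal is correct and follows essentially the same route as the paper's own proof: conjugate $G$ into $\mathcal J(x_1,\ldots,x_n)$ by a linear element via the Lie--Kolchin theorem (Example \ref{LK}), apply Theorem \ref{unitri} to the conjugate, and transport the resulting affine cross-section back by the inverse linear map. The only cosmetic difference is that you spell out the equivariance of the conjugating map and the explicit linear equations for $L$, which the paper leaves implicit.
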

\begin{proof}
 There exists an element
$ g\in {\rm GL}_n$
 such that
 $gGg^{-1}\subset \mathcal J(x_1,\ldots, x_n)$ (see Example\;\ref{LK}).\;By Theorem \ref{unitri} there exists an affine subspace $S$ of $\An$ that is a rational cross-section for the natural action of $gGg^{-1}$ on $\An$.\;Then the affine subspace
$g^{-1}(S)$ is a rational cross-section for the natural action of $\,G$ on $\An$.
\quad $\square$ \renewcommand{\qed}{}\end{proof}

Here is the application of Corollary \ref{us}.\;Let $G$ be a connected affine algebraic group and let $\mathfrak g$ be the Lie algebra of $G$.\;Joseph put forward the following

\vskip 2mm

\noindent{\bf Conjecture
A\;{\rm
%%(Ref.\;
(\cite[Sect.\;7.11]{Jos11})}.} {\it For the coadjoint action of $\,G$ on ${\mathfrak g}^*,$ there exists an affine subspace $L$ of $\,\mathfrak g^*$ such that $k(\mathfrak g^*)^G\to k(L)$, $f\mapsto f|^{}_L$, is a well-defined isomorphism of fields.}

\vskip 2mm

Joseph calls such $L$ a {\it rational slice}.

According to the Levi decomposition, $\mathfrak g$ is a semidirect product of a reductive Lie algebra $\mathfrak r$ and the unipotent radical $\mathfrak u$,
\begin{equation}\label{Le}
\mathfrak g=\mathfrak r\ltimes \mathfrak u.
\end{equation}

\begin{corollary}
If \eqref{Le} is a direct product, $\mathfrak g=\mathfrak r\times \mathfrak u$,
then
Conjecture A
is true.
\end{corollary}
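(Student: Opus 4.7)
The plan is to reduce the problem to two separate cross-section questions on the factors of a direct product and then reassemble.

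First I would translate the hypothesis on $\mathfrak g$ into a statement about $G$. Since $\mathfrak g=\mathfrak r\times\mathfrak u$ as a Lie algebra, we have $[\mathfrak r,\mathfrak u]=0$; because $G$ is connected affine and $\operatorname{char}k=0$, this implies that a Levi subgroup $R\subset G$ (connected reductive with $\operatorname{Lie}R=\mathfrak r$) centralizes the unipotent radical $U:=R_u(G)$, so that $G=R\times U$ as algebraic groups. Consequently the coadjoint representation decomposes as $\mathfrak g^{*}=\mathfrak r^{*}\oplus\mathfrak u^{*}$, where $R$ acts coadjointly on $\mathfrak r^{*}$ and trivially on $\mathfrak u^{*}$, while $U$ acts trivially on $\mathfrak r^{*}$ and coadjointly on $\mathfrak u^{*}$ (the triviality statements follow from $[\mathfrak r,\mathfrak u]=0$ via the explicit formula for coadjoint action).

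Next I would build an affine rational cross-section on each summand. The coadjoint representation realizes $U$ as a unipotent algebraic subgroup of $\operatorname{GL}(\mathfrak u^{*})$, so Corollary \ref{us} furnishes, directly, an affine subspace $L_U\subseteq\mathfrak u^{*}$ that is a rational cross-section for the coadjoint action of $U$. For the reductive factor I would decompose $\mathfrak r=\mathfrak z(\mathfrak r)\oplus[\mathfrak r,\mathfrak r]$, note that $\mathfrak z(\mathfrak r)^{*}$ is pointwise $R$-fixed, and invoke Kostant's classical theorem on the semisimple piece (transported from $[\mathfrak r,\mathfrak r]$ to $[\mathfrak r,\mathfrak r]^{*}$ via a nondegenerate invariant bilinear form): the Kostant slice to the regular coadjoint orbits, together with all of $\mathfrak z(\mathfrak r)^{*}$, forms an affine subspace $L_R\subseteq\mathfrak r^{*}$ which meets a generic coadjoint $R$-orbit in exactly one point.

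Finally I would assemble the two pieces. Set $L:=L_R\times L_U\subseteq\mathfrak r^{*}\oplus\mathfrak u^{*}=\mathfrak g^{*}$, which is an affine subspace. Since $R$ and $U$ act on disjoint factors, the generic $G$-orbit through $(x,y)$ is $(R\cdot x)\times(U\cdot y)$, so it meets $L$ in a single point; equivalently, the rational quotient $\mathfrak g^{*}\dashrightarrow\mathfrak g^{*}\dss G$ is birationally the product of the two factor quotients, and $L$ maps birationally onto it. This yields the required isomorphism $k(\mathfrak g^{*})^{G}\xrightarrow{\sim}k(L)$, $f\mapsto f|_{L}$, and establishes Conjecture A under the hypothesis. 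The principal obstacle is the reductive half: the unipotent part is immediate from the paper's Corollary \ref{us}, whereas the existence of an affine rational slice for the coadjoint representation of a reductive group is not provided by the methods developed in the paper and must be imported from Kostant; once both slices are in hand, the combination on the direct product is entirely routine because the two actions share no coordinates.
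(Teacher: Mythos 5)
Your proposal is correct and follows essentially the same route as the paper: decompose $\mathfrak g^{*}=\mathfrak r^{*}\oplus\mathfrak u^{*}$, take the product of a Kostant slice for the coadjoint action of $R$ (imported from \cite{Kos63}) with the affine cross-section for the coadjoint action of $U$ supplied by Corollary~\ref{us}. Your extra remarks (that $[\mathfrak r,\mathfrak u]=0$ makes each factor act trivially on the other summand, and that the rational quotient of the product is the product of the quotients) merely spell out what the paper leaves implicit.
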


\begin{proof}  Let $R$ and $U$ be the closed connected subgroups of $G$ whose Lie algebras are resp.\;$\mathfrak r$ and $\mathfrak u$.\;Assume that $\mathfrak g=\mathfrak r\times\mathfrak u$.
In this case, if $L_{\mathfrak r}$ and
$L_{\mathfrak u}$ are  the rational slices for the coadjoint actions of resp.\;$R$ and $U$, then $L_{\mathfrak r}\times L_{\mathfrak u}$ is a rational slice for the coadjoint action of $G$.\;The existence of $L_{\mathfrak r}$ is proved in
%%Ref.\;
\cite{Kos63} and  the existence of $L_{\mathfrak u}$ is ensured by Corollary\;\ref{us}.
\quad $\square$ \renewcommand{\qed}{}\end{proof}

 \begin{remark} {\rm Another application is that Theorem \ref{unitri} yields the results of %%Ref.\;
 \cite[Part II, Chap.\,I, \S 7]{Pu}}.
 \end{remark}

The rational de Jonqui\`eres subgroup
$\mathcal J(t_1,\ldots, t_n)$ lies in another
interesting subgroup of  $\,{\mathcal C}_n$.\;Namely,
%%the same argument
as for $\mathcal J(t_1,\ldots, t_n)$,
%%shows
one checks that, for every $f_i\in K_i$ and $\mu_i\in K_i^\times$, there exists an element
$g\in\mathcal C_n$
%%such that
 for which \eqref{T} holds and that the set $\widehat{\mathcal J}(t_1,\ldots, t_n)$ of all such elements
 $g$
 is a subgroup of $\mathcal C_n$.\;The flag of subfields \eqref{ff} is stable with respect to $\widehat{\mathcal J}(t_1,\ldots, t_n)$:
\begin{equation}\label{iflag}
g\cdot K_i= K_i\qquad\mbox{for all}\;\; g\in \widehat{\mathcal J}(t_1,\ldots, t_n)\;\;\mbox{and}\;\; i.
\end{equation}

%%Every two
%%such subgroups  are conjugate in ${\mathcal C}_n$.
If $s_1,\ldots, s_n$ is another system of generators of of $k(\An)$ over $k$, then the subgroups
%%Every two
%%such subgroups
$\widehat{\mathcal J}(t_1,\ldots, t_n)$ and $\widehat{\mathcal J}(s_1,\ldots, s_n)$
are conjugate in ${\mathcal C}_n$.

The following fact is known; it provides an information on tori in $\,\widehat{\mathcal J}(t_1,\ldots\linebreak\ldots , t_n)$ (see Corollary \ref{ttttt} below).
\begin{theorem}\label{TTT} For every  $($not necessarily algebraic\hskip .3mm$)$ subgroup
 $\,G$ of the group $\,\widehat{\mathcal J}(t_1,\ldots, t_n)$, the invariant field $k(\An)^G$ is pure over $k$.
 \end{theorem}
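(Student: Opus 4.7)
The plan is to induct on $n$, peeling off one variable at a time using the $G$-stable flag \eqref{ff} (whose stability is recorded in \eqref{iflag}). For $n=1$, $K_1=k$ and $K_0=k(t_1)$, so the base case is subsumed by the key lemma below applied with $F=k$. For $n\geqslant 2$, observe that for the indices $i\geqslant 2$ in \eqref{T} the data $\mu_i(g)$ and $f_i(g)$ already lie in $K_1=k(t_2,\ldots,t_n)$; consequently the restriction of the $G$-action to $K_1$ is by elements of $\widehat{\mathcal J}(t_2,\ldots,t_n)$, a group of the same type in $n-1$ algebraically independent generators. By induction $K_1^G$ is pure over $k$, and it remains to show that $K_0^G$ is pure over $K_1^G$.

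Setting $F:=K_1$ and $L:=K_0=F(t_1)$, the extended $G$-action has the form $g\cdot t_1=\mu_1(g)t_1+f_1(g)$ with $\mu_1(g)\in F^\times$, $f_1(g)\in F$. The induction step thus reduces to the following:

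\emph{Key lemma.} Let $F$ be a field equipped with an action of a group $G$ by automorphisms, and let $L=F(x)$ carry a $G$-action extending that on $F$ via $g\cdot x=a(g)x+b(g)$, $a(g)\in F^\times$, $b(g)\in F$. Then either $L^G=F^G$, or $L^G=F^G(y)$ for some $y\in L$.

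To prove the key lemma I would argue as follows. If $L^G=F^G$ there is nothing to show. Otherwise choose $y=P(x)/Q(x)\in L^G\setminus F^G$ in lowest terms with $Q$ monic, taking $\deg P+\deg Q$ minimal. Writing out $g\cdot y=y$, and using unique factorization in $F[x]$ together with the fact that $x\mapsto a(g)x+b(g)$ preserves degrees, one deduces that $P$ and $Q$ are ``relative invariants'' of a common $1$-cocycle character $\chi\colon G\to F^\times$, namely $(g\cdot P)(a(g)x+b(g))=\chi(g)P(x)$ (with $g$ acting on coefficients) and the analogous identity for $Q$. One then verifies that every element of $L^G$ is a rational function in $y$ with coefficients in $F^G$, so that $L^G=F^G(y)$.

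The main obstacle will be this last verification in the key lemma: showing that a single $y$ suffices to generate $L^G$ over $F^G$. It is precisely at this step that the affine (equivalently, $\infty$-fixing) shape of the $G$-action on $x$ is essential, since it provides a $G$-stable degree filtration on $F[x]$ that pins down each relative-invariant character by its effect on leading coefficients and forces the monoid of occurring characters to be cyclic; this in turn reduces every element of $L^G$ to a rational expression in the chosen minimal $y$. The lemma is classical, in the tradition of Miyata's rationality theorem for monomial group actions.
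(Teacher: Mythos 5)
Your reduction is the same as the paper's: using the stability \eqref{iflag} of the flag \eqref{ff} you peel off one generator at a time, and everything rests on the one--variable statement that for $L=F(x)$ with $g\cdot x=a(g)x+b(g)$, $a(g)\in F^\times$, $b(g)\in F$, either $L^G=F^G$ or $L^G=F^G(y)$. That statement is exactly what the paper establishes, namely Miyata's Lemma \ref{Mi} (\cite{Miy71}, cf.\ \cite{KV89}, \cite{AHK00}) sharpened by the equality \eqref{invar}; your induction on $n$ is just the paper's iteration along the chain $K_n^G\subseteq\cdots\subseteq K_0^G$. (The point you leave implicit is harmless: in the second case $y$ is automatically transcendental over $F^G$, since $F$ is algebraically closed in $F(x)$, so purity does propagate through the induction.)

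The gap is inside your proof of the key lemma, precisely at the step you yourself flag. Choosing $y=P/Q\in L^G\setminus F^G$ of minimal degree and observing that $P,Q$ are relative invariants for a common cocycle (the analogue of \eqref{gam}) gives no handle whatsoever on an arbitrary $f\in L^G$: the whole difficulty is to show that $f\in F(y)$, i.e.\ that $F(L^G)=F(y)$, and this is the entire content of Miyata's lemma. You cannot invoke L\"uroth over $F^G$, because $L^G$ is not a priori contained in a rational function field over $F^G$; and for a general subfield $M$ of $F(x)$ there is no a priori reason why an element of $M\setminus F$ of minimal degree should generate $F(M)$ over $F$ --- it is the degree-preserving action on $F[x]$, exploited through polynomial semi-invariants of minimal degree as in \cite{Miy71}, that makes this work, and your remark that the degree filtration ``forces the monoid of occurring characters to be cyclic'' is a heuristic, not an argument. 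By contrast, once one has a generator that is a $G$-invariant element $w$ (Miyata even gives a polynomial one, $w\in F[x]^G$) with $F(L^G)=F(w)$, the descent to $F^G$-coefficients is the easy part and is exactly what the paper does: writing $f=p(w)/q(w)$ in lowest terms, invariance of $w$ and factoriality of $F[w]$ give \eqref{gam} and \eqref{coeff}, and dividing numerator and denominator by a leading coefficient yields $f\in F^G(w)$. So either quote the classical lemma --- in which case your argument coincides with the paper's --- or supply a genuine proof of the generation step; as written, the induction step is not proved.
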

   \begin{proof} We shall sketch a proof since our argument
   provides a bit more
   information   (equality \eqref{KGx})
   than that of
   %%Refs.\;
   \cite{Miy71} and \cite{KV89}.\;The key ingredient
   is the following Miyata's lemma:
 \begin{lemma}[%%Ref.\;
 {\cite[Lemma]{Miy71}, cf.\,\cite[Lemme 1.1]{KV89},
 %%Ref.\;
 \cite[Sect.\;3]{AHK00}}]
 \label{Mi}
 Let $F$ be a field, let $z$ be a variable over $F$,  and let $H$ be a group that acts on
$F[z]$ by ring automorphisms
 leaving $F$ stable\footnote{It is not assumed that $F$ is pointwise fixed.}.\;Then the subfield of $F(z)$ generated by $F(z)^H$ over $F$  is, in fact, generated
 by a single element
 $x\in F[z]^H:$
 %%{\rm:}
 \begin{equation}\label{Kx}
 F(F(z)^H)=F(x).\qquad \square
 \end{equation}
 \end{lemma}

 Turning to the proof of Theorem \ref{TTT}, we first show that, in the notation of Lemma\;\ref{Mi},
 \begin{equation}\label{invar}
F(z)^H=F^H(x).
 \end{equation}
 Indeed, $F(x)^H\subseteq F(z)^H$ since $F(x)\subseteq F(z)$.\;On the other hand, \eqref{Kx} entails that
 $F(z)^H\subseteq F(x)^H$. Hence, $F(z)^H= F(x)^H$. Therefore, \eqref{invar} would be proved if
 the equality
 \begin{equation}\label{KGx}
 F(x)^H=F^H(x)
 \end{equation}
 is established.\;To prove \eqref{KGx}, consider two cases:\;(a) $x\in F$,  (b) $x\notin F$.\;If (a) holds, then $F(x)=F$, hence, $F(x)^H=F^H$.\;On the other hand, (a) and $x\in F[z]^H$\;yield that $x\in F^H$, hence, $F^H(x)=F^H$.\;This proves \eqref{KGx} if (a) holds.\;Now assume that (b)\;holds. Then $x$ is transcendental over $F$ by
 %%Ref.\;
 \cite[\S73, Theorem]{vdW67}.\;Consider an element $f\in F(x)^H$.\;It can be written as $f=p/q$ where
 \begin{equation}\label{polyn}
 p=\sum_{i=0}^sa_ix^i,\qquad q=\sum_{j=0}^r b_jx^j,\quad a_i, b_j\in F, \quad a_sb_r\neq 0,
 \end{equation}
and $p$ and $q$ are relatively prime po\-ly\-no\-mi\-als in $x$ with the coefficients in $F$.  Since $F[x]$ is a factorial ring, the relative primeness of $p$ and $q$ and $H$-invariance of $f$ imply that there is a map $\gamma\colon H\to F^*$ (in fact, a 1-cocycle) such that
\begin{equation}\label{gam}
h\cdot p=\gamma(h)p,\qquad h\cdot q=\gamma(h)q\quad\mbox{for every}\;\;h\in H.
\end{equation}
Since $x$ is $H$-invariant, \eqref{polyn} and \eqref{gam} yield
\begin{equation}\label{coeff}
h\cdot a_i=\gamma(h)a_i,\qquad h\cdot b_j=\gamma(h)b_j\quad \mbox{for all}\;\;h\in H \;\;\mbox{and}\;\;i, j.
\end{equation}
From \eqref{coeff} we infer that $f\!=\!a_s^{-1}p/a_s^{-1}q\!\in\! F^H(x)$.\;Thus, $F(x)^H\!\subseteq\! F^H(x)$. Since\;$x$ is $H$-invariant, the inverse inclusion is clear.\;This proves \eqref{KGx}.\;Thus, \eqref{invar} holds and, moreover,
either $x\in F^H$ or $x$ is transcendental over $F$.

Now let $G$ be a subgroup of  $\,\widehat{\mathcal J}(t_1,\ldots, t_n)$.\;We have $K_{i-1}=K_{i}(t_i)$ and  $t_i$ is transcendental over $K_i$ for every $i=1,\ldots, n$.\;By \eqref{iflag} and the definition of  $\,\widehat{\mathcal J}(t_1,\ldots, t_n)$ the action of $G$ on $K_{i-1}$ satisfies the conditions of Lemma \ref{Mi} (with $F=K_i$, $z=t_i$, $H=G$).\;Hence, as is proved above, there is an element
$z_i\in K_i[t_i]^G$ such that $K_{i-1}^G=K_{i}(t_i)^G=K_{i}^G(z_i)$ and either $z_i\in K_i^G$ or $z_i$ is transcendental over $K_i$. Respectively, either $K_{i-1}^G=K_i^G$ or $K_{i-1}^G$ is pure over $K_i^G$ of transcendental degree\;1.\;Since
\begin{equation*}
k=K_n^G\subseteq K_{n-1}^G\subseteq\cdots\subseteq K_1^G\subseteq K_0^G=k(\An)^G,
\end{equation*}
this completes the proof.
 \quad $\square$ \renewcommand{\qed}{}\end{proof}

 \begin{corollary}\label{ttttt} Every torus
 %%$\,T$
 in  $\,\widehat{\mathcal J}(t_1,\ldots, t_n)$ is conjugate in $\,\mathcal C_n$ to a subgroup of\;$D_n$.
 %%\;Equivalently{\rm:} the natural action of $\,T$\,on $\An$ is linearizable.
 \end{corollary}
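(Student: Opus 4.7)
The plan is to combine Theorem~\ref{TTT} with the torus linearizability criterion of Theorem~\ref{torus}, and then use the classical conjugacy of tori inside $\mathrm{GL}_n$.

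First, let $T$ be a torus lying in $\widehat{\mathcal J}(t_1,\ldots,t_n)\subseteq \mathcal C_n$, and let $\alpha$ denote the natural rational action of $T$ on $\An$. Since $T$ is regarded as a subgroup of $\mathcal C_n=\mathrm{Aut}_k k(\An)$, this action is faithful, hence by Lemma~\ref{lfat}(i) it is locally free. Apply Theorem~\ref{TTT} to the subgroup $T\subseteq \widehat{\mathcal J}(t_1,\ldots,t_n)$ to conclude that the invariant field $k(\An)^T=k({}_\alpha\An)^T$ is pure over $k$.

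Second, feed this purity into Theorem~\ref{torus}: the equivalence (i)$\Leftrightarrow$(ii) there tells us that the locally free rational action $\alpha$ of the torus $T$ on $\An$ is linearizable. In other words, there exists $h\in\mathcal C_n$ such that $hTh^{-1}$ is a linear subgroup of $\mathcal C_n$, that is, $hTh^{-1}\subseteq\mathrm{GL}_n$.

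Finally, any torus in $\mathrm{GL}_n$ is contained in a maximal torus of $\mathrm{GL}_n$, and all maximal tori of $\mathrm{GL}_n$ are conjugate to $D_n$ (see, e.g., \cite[Sect.\,8 and Sect.\,11]{Bor91}); so there exists $h'\in\mathrm{GL}_n\subset\mathcal C_n$ with $h'(hTh^{-1})h'^{-1}\subseteq D_n$. Setting $h''\!:=h'h\in\mathcal C_n$, we obtain $h''Th''^{-1}\subseteq D_n$, which is the desired conclusion. No step here is genuinely delicate: the content of the corollary is entirely absorbed by Theorem~\ref{TTT} (purity of invariants) and Theorem~\ref{torus} (purity implies linearizability for tori), so the proof should reduce to citing these two results plus the standard conjugacy of tori in $\mathrm{GL}_n$.
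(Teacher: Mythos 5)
Your proof is correct and is essentially the paper's own argument: the paper derives this corollary by combining Theorem~\ref{TTT} (purity of $k(\An)^T$) with Theorem~\ref{torus} (purity implies linearizability for tori). Your extra final step, conjugating the resulting torus in ${\rm GL}_n$ into $D_n$, merely makes explicit a standard fact the paper leaves implicit.
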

 \begin{proof}
This follows from Theorems  \ref{torus} and \ref{TTT}.
 \quad $\square$ \renewcommand{\qed}{}\end{proof}

\begin{corollary}\label{fifi} Let $n\geqslant 5$.\;Every $(n-3)$-dimensional connected affine algebraic group
can be realized as an algebraic subgroup of $\,\mathcal C_n$ such that
\begin{enumerate}[\hskip 9mm]%%\rm(i)]
\item[$({\rm i})$] $G$ is not conjugate to a subgroup of $\,\widehat{\mathcal J}(t_1,\ldots, t_n)${\rm;}
\item[$({\rm ii})$] the natural rational action of $\,G$ on $\An$ is locally free.
\end{enumerate}
\end{corollary}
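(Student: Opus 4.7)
The plan is to combine Theorem \ref{nr} with Theorem \ref{TTT}; the corollary will fall out almost immediately once the bookkeeping about conjugation of rational de Jonqui\`eres subgroups is carried out. First I would apply Theorem \ref{nr} to the given $(n-3)$-dimensional connected affine algebraic group $G$, thereby realizing it as an algebraic subgroup of $\mathcal C_n$ whose natural rational action on $\An$ is locally free and for which $k(\An)^G$ is not pure over $k$. Item (ii) is then immediate from Theorem \ref{nr}(ii).

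For (i) I would proceed by contradiction. Suppose some $h\in\mathcal C_n$ satisfies $hGh^{-1}\subseteq \widehat{\mathcal J}(t_1,\ldots,t_n)$. Put $s_i:=h^{-1}\!\cdot t_i$; these elements form another system of generators of $k(\An)$ over $k$. A direct calculation from the defining property \eqref{T} of $\widehat{\mathcal J}(t_1,\ldots,t_n)$ shows that for every $g\in \widehat{\mathcal J}(t_1,\ldots,t_n)$ one has $(h^{-1}gh)\cdot s_i=\mu_i s_i+h^{-1}\!\cdot f_i$, with $h^{-1}\!\cdot f_i\in k(s_{i+1},\ldots,s_n)$ and $\mu_i\in k(s_{i+1},\ldots,s_n)^{\times}$; hence $h^{-1}\widehat{\mathcal J}(t_1,\ldots,t_n)h=\widehat{\mathcal J}(s_1,\ldots,s_n)$, which is simply the content of the conjugacy assertion recorded just before Corollary \ref{ttttt}. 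Therefore $G\subseteq \widehat{\mathcal J}(s_1,\ldots,s_n)$.

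Now I invoke Theorem \ref{TTT} applied to the subgroup $G$ of $\widehat{\mathcal J}(s_1,\ldots,s_n)$ (note that the statement of Theorem \ref{TTT} is valid for any system of generators of $k(\An)$ over $k$, not only for the standard coordinates). The conclusion is that $k(\An)^G$ is pure over $k$, contradicting the first conclusion of Theorem \ref{nr}. Thus $G$ is not conjugate to any subgroup of $\widehat{\mathcal J}(t_1,\ldots,t_n)$, which proves (i).

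There is no genuine obstacle in this argument; the only delicate point is the verification that conjugation by $h\in\mathcal C_n$ sends $\widehat{\mathcal J}(t_1,\ldots,t_n)$ onto $\widehat{\mathcal J}(h^{-1}\!\cdot t_1,\ldots,h^{-1}\!\cdot t_n)$, which is a direct unwinding of the definition. Once this is noted, the corollary is a straightforward combination of the non-purity output of Theorem \ref{nr} with the purity output of Theorem \ref{TTT}.
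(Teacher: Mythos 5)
Your proposal is correct and follows essentially the same route as the paper, whose proof of this corollary is precisely the combination of Theorem \ref{nr} (non-purity of $k(\An)^G$ plus local freeness) with Theorem \ref{TTT} (purity for any subgroup of a rational group $\widehat{\mathcal J}$), via the conjugacy of the groups $\widehat{\mathcal J}$ attached to different generating systems. The only cosmetic slip is that after conjugation the coefficient of $s_i$ should be $h^{-1}\!\cdot\mu_i\in k(s_{i+1},\ldots,s_n)^{\times}$ rather than $\mu_i$ itself, which does not affect the argument.
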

\begin{proof} This follows from Theorems \ref{nr} and \ref{TTT}.
\quad $\square$ \renewcommand{\qed}{}\end{proof}

 \begin{remark} {\rm The assumption that $k$ is algebraically closed is not used in the proof of Theorem \ref{TTT}.\;}
 \end{remark}

 \begin{remark} {\rm  In
 %%Ref.\;
 \cite{Miy71}, Lemma \ref{Mi} is used for proving that $k(\An)^G$ is pure over $k$ if $G$ is a subgroup of ${\rm GL}_n\cap {\mathcal J}(x_1,\ldots, x_n)$.\;Note that in this case, if $G$ is finite, then purity of $k(\An)^G$ over $k$ follows from
 Corollary \ref{FFFF} and Lemma\;\ref{diag}.}
 \end{remark}

 \begin{remark} {\rm A weakened version of
 Theorem \ref{TTT}
  is
  the subject of
  %%\;Ref.\;
  \cite{Vin92}. In\;it,\;$G$ is an affine algebraic group and $\widehat{\mathcal J}(x_1,\ldots, x_n)$ is replaced by ${\mathcal J}(x_1,\ldots\linebreak\ldots, x_n)$.
  How\-ever,
  the argument
  in
  %%Ref.\;
  \cite{Vin92}
 does  not amount to complete and accurate proof.\;Indeed,
it is based on the claim, left unproven, that if $G$ is reductive, then $G$ is conjugate in ${\mathcal J}(x_1,\ldots, x_n)$ to
 a subgroup of $D_n$.\;Further, the claim that, for  a one-dimensional
 unipotent algebraic group $U$, ``every point is $U$-equivalent to a unique point of the subspace $S=\{x\in k^n : x_m=0\}$'' is false because
 $u\cdot s$ may be not defined for
 $u\!\in\! U$ and $s\!\in\! S$.\;Ditto for the claim  that $F_i\!\in\! k(x_{i+1},\ldots, x_n)\otimes k[t]$
 (counterexample: $n=3$ and the action  is given by
 $t\cdot x_1=x_1-t/x_2(x_2+t)$,\;$t\cdot x_2=x_2+t$,\;$t\cdot x_3=x_3$), so the equality $F_m(x_{m+1},\ldots, x_n; t)=tF_m(x_{m+1},\ldots, x_n)$
 remains
 unproven.}
\end{remark}

\begin{remark}\label{Trian}
{\rm
 One cannot replace $\,\widehat{\mathcal J}(t_1,\ldots, t_n)$ in Theorem \ref{TTT} by the
$\mathcal C_n$-stabilizer of the flag of subfields  \eqref{ff}. Indeed, by
%%Ref.
\cite{Tri80}, for $k=\mathbf C$, $n=3$, this stabilizer contains a subgroup $G$ of order $2$ such that $k(\An)^G$ is not pure over $k$.
}
\end{remark}

Combining the construction from
%%Ref.\;
\cite{Tri80} with Corollary \ref{stT} and Lemma\;\ref{diag} we obtain the following

\begin{theorem} Let  $k=\mathbf C$ and let $\mathcal A$ be the union of all
con\-nec\-ted affine algebraic subgroups of $\;\mathcal C_\infty$.\;There exists an element
$g\in{\mathcal C}_3$
of order $2$ such
that
$g\notin \mathcal A$.\;In particular,
$g$ is not stably li\-nea\-rizable.
\end{theorem}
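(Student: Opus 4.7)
The plan is to combine Trivkov's construction \cite{Tri80} with the stable linearizability of tori (Corollary \ref{stT}) and the purity result of Lemma \ref{diag}. First, I would use \cite{Tri80} to produce an element $g\in\mathcal C_3$ of order $2$ whose invariant field $k(\mathbf A^3)^{\langle g\rangle}$ fails to be stably pure over $k$, not merely fails to be pure (cf.\ Remark \ref{Trian}). Verifying this strengthening of Trivkov's counterexample is the key input; once it is granted, the rest of the argument is formal manipulation of the results already developed in the paper.

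Next I would argue by contradiction. Suppose $g\in G$ for some connected affine algebraic subgroup $G\subset\mathcal C_\infty$, so $G\subset\mathcal C_N$ for some $N\geqslant 3$. Since $g$ has finite order and $\mathrm{char}\,k=0$, the element $g$ is semisimple in $G$, and therefore lies in a maximal torus $T$ of $G$. By Corollary \ref{stT} the natural rational action of $T$ on $\mathbf A^N$ is stably linearizable, and since a linearized torus is conjugate into $D_M$ by \cite[Prop.\,8.2(d)]{Bor91}, there exist an integer $M\geqslant N$ and an element $h\in\mathcal C_M$ with $hTh^{-1}\subset D_M$. In particular $hgh^{-1}\in D_M$, so $h\langle g\rangle h^{-1}$ is a finite diagonalizable subgroup of $D_M$ acting linearly on $\mathbf A^M$.

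Applying Lemma \ref{diag} to this action yields that $k(\mathbf A^M)^{h\langle g\rangle h^{-1}}$ is pure over $k$; conjugating by $h^{-1}$ (which is a $k$-automorphism of $k(\mathbf A^M)$) preserves purity, so $k(\mathbf A^M)^{\langle g\rangle}$ is pure over $k$ as well. On the other hand, under the tower of embeddings $\mathcal C_3\hookrightarrow\mathcal C_N\hookrightarrow\mathcal C_M$ the element $g$ fixes each of the variables $x_4,\ldots,x_M$, so
\begin{equation*}
k(\mathbf A^M)^{\langle g\rangle}=k(\mathbf A^3)^{\langle g\rangle}(x_4,\ldots,x_M).
\end{equation*}
This exhibits $k(\mathbf A^3)^{\langle g\rangle}$ as stably pure over $k$, contradicting the choice of $g$ and hence establishing $g\notin\mathcal A$. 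The concluding sentence about non-stable-linearizability is immediate, since if $g$ were stably linearizable then $g$ would lie in a conjugate of some $\mathrm{GL}_N$, which is a connected affine algebraic subgroup of $\mathcal C_N\subset\mathcal C_\infty$.

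The main obstacle is the very first step: checking that the Trivkov involution has invariant field which is not only non-pure but non-stably-pure. Every other step is an application of machinery already proved, and the crucial observation is that passing from $\mathcal C_3$ up to $\mathcal C_M$ only adjoins the commuting variables $x_4,\ldots,x_M$, so any obstruction to stable purity in $k(\mathbf A^3)^{\langle g\rangle}$ would have to be destroyed by those variables if $g$ were to lie in a connected affine algebraic subgroup.
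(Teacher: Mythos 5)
Your reduction is exactly the paper's: finite order implies $g$ is semisimple in any connected affine algebraic overgroup, hence lies in a maximal torus $T$; Corollary \ref{stT} conjugates $T$ (hence $g$) into some $D_M$; Lemma \ref{diag} then forces $k(\mathbf A^M)^{\langle g\rangle}$ to be pure over $k$; and since $g$ acts only through the first three variables, this says $k(\mathbf A^3)^{\langle g\rangle}$ is stably pure (the paper does this last step geometrically, via \cite[Lemma 3]{Ros612}, getting $\mathbf A^n\dss G\approx X\times\mathbf P^{n-3}$, which is the same point). So the formal part of your argument is sound and coincides with the paper's.

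The genuine gap is precisely the step you flag and then postpone: the existence of an involution $g\in\mathcal C_3$ whose invariant field is not merely non-pure (which is all that Remark \ref{Trian} records) but \emph{not stably pure}. That is the entire content of the theorem, and ``I would use \cite{Tri80} to produce'' such a $g$ is not a proof, since Triantaphyllou's construction by itself only certifies non-rationality of the invariant field; a non-rational field can perfectly well be stably rational (this is exactly the phenomenon of \cite{SB04} exploited in Theorem \ref{nr}). The paper closes this gap by a specific choice and a specific stable invariant: in \cite{Tri80} one can arrange $k(\mathbf A^3)^{\langle g\rangle}\cong k(X)$ where $X$ is the Artin--Mumford threefold with ${\rm H}^3(X,\mathbf Z)_{\rm tors}\neq 0$; since torsion in ${\rm H}^3$ of a smooth projective variety is a birational invariant vanishing for rational varieties, and by the K\"unneth formula ${\rm H}^3(X\times\mathbf P^{m})_{\rm tors}\neq 0$ for all $m$, the product $X\times\mathbf P^{m}$ is never rational, i.e.\ $k(X)$ is not stably pure. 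Without an argument of this kind (an obstruction stable under adjoining indeterminates), your proof is incomplete at its decisive point; with it, it becomes the paper's proof.
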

\begin{proof}
Let $X$ be the three-dimensional counterexample of Artin and Mumford to the L\"uroth problem %%(Ref.\;
(\cite{AM72}, see\;also
%%Ref.\;
\cite{Del70}): $X$ is a smooth projective unirational threefold such that
\begin{equation}\label{H3}
{\rm H}^3(X, \mathbf Z)_{\rm tors}\neq 0.
\end{equation}
Since the torsion subgroup of
the third integral cohomology group of a smooth complex variety is a birational invariant and, in
particular, is zero if the variety is rational,
\eqref{H3} implies that $X$ is not rational.

  In
  %%Ref.\;
  \cite{Tri80} is constructed a subgroup $G$ of order $2$ in $\mathcal C_3$ such that $k(\mathbf A^3)^G$ is $k$-isomorphic to
$k(X)$.
Let $g$ be the generator of $G$.\;Arguing on the contrary, assume that $g$ is contained in a connected affine algebraic subgroup $H$ of $\mathcal C_\infty$.\;Since the order of $g$ is finite, $g$ is a semisimple element of $H$.\;Hence,  $g$ lies in a maximal torus $T$ of $H$ (see
%%\;Ref.\;
\cite[Theorems III.10.6(6) and IV.11.10]{Bor91}).\;By Corollary \ref{stT} there exists a positive integer $n_0$ such that
$T\subset \mathcal C_{n_0}$ and $T$ is conjugate in $\mathcal C_{n_0}$ to a subtorus of $D_{n_0}$.\;Fix an integer $n\geqslant {\rm max}\{n_0, 3\}$.\;Then  $G\subset \mathcal C_n$
and $G$ is conjugate in $\mathcal C_n$
to a subgroup of $D_n$.\;This and Lemma\;\ref{diag} yield that for the natural action of $G$ on $\mathbf A^n$ the field $k(\An)^G$ is pure over\;$k$.
Since $G\subset \mathcal C_3$, by
%%Ref.\;
\cite[Lemma 3]{Ros612}, we have
\begin{equation}\label{XP}
\An\dss G\approx \mathbf A^3\dss G\times \mathbf A^{n-3}\approx X\times \mathbf P^{n-3}
\end{equation}
From \eqref{XP} we infer that the smooth projective variety $X\times \mathbf P^{n-3}$ is rational and therefore ${\rm H}^3(X\times \mathbf P^{n-3})_{\rm tors}
=0$.\;On the other hand, the K\"unneth formula and  \eqref{H3} yield that ${\rm H}^3(X\times \mathbf P^{n-3})_{\rm tors}\neq 0$\,---\,a contradiction.
\quad $\square$ \renewcommand{\qed}{}\end{proof}

%%%%%%%%%%%%%%%%%%%%%%%%

We conclude by an example which shows
%%The following example shows
that  in the formulation of  Corollary\;\ref{us}
``unipotent'' cannot be replaced by ``connected solvable'' (recall that if $G$ is connected solvable, then the existence of {\it some} rational cross-section is ensured by
%%Ref.\;
\cite[Theorem\;10]{Ros56}).

\begin{example}
Fix a choice of two integers $d_1$ and $d_2$ such that
\begin{gather}
d_1-d_2\geqslant 2, \label{sub}\\[-2pt]
|d_1|\geqslant 2,
\quad
 |d_2|\geqslant 2,\label{d}
\\[-2pt]
{\rm gcd}(d_1, d_2)=1.\label{gcd}
\end{gather}
Consider
 the one-dimen\-sio\-nal sub\-to\-rus
\begin{equation}\label{act}
T:=\big\{{\rm diag}\big(t^{d_1}, t^{d_2}\big)\mid t\in k^\ast\big\}
\end{equation}
of  $D_2$ and its rational linear action $\alpha$ on ${\bf A}\hskip -.5mm^2$ defined by formula \eqref{diago}.

In view of  \eqref{gcd}, the $T$-stabilizer of every point $a\in {\bf A}\hskip -.5mm^2$, $a\neq (0, 0)$, is trivial.

\vskip 2mm

 \noindent {\bf Claim}.\;{\it There is no affine subspace in ${\bf A}\hskip -.5mm^2$ that is a rational cross-sec\-tion for\;$\alpha$.}

 \vskip 2mm

%% \noindent{\it Proof}.
 \begin{proof} Assume that some affine subspace $L$ of ${\bf A}\hskip -.5mm^2$  is a rational cross-section for $\alpha$. Since $T$-orbits in general position are one-dimensio\-nal,
$L$ is a line. Let
\begin{equation}\label{eq}
\mu_1x_1+\mu_2x_2+\nu=0, \qquad \mu_1,\mu_2,\nu\in k.
\end{equation}
be its equation.\;Since $L$ is a rational cross-section, there is a nonempty open subset $U$ of ${\bf A}^{\hskip -.5mm 2}$
such that for every point $a=(a_1, a_2)\in U$, the $T$-orbit of $a$ intersects $L$ at a single point, i.e., by \eqref{act} and \eqref{eq}, the following equation in~$t$
\begin{equation}\label{root}
\mu_1a_1t^{d_1}+\mu_2a_2t^{d_2}+\nu=0
\end{equation}
has a single nonzero solution. Shrinking $U$, we may assume that $a_1a_2\neq 0$ for
every $a\in U$.

If $\mu_1\mu_2=0$, then \eqref{root} becomes an equation  of the form $\mu t^d+\nu=0$ where $\mu\in k$, $\mu\neq 0$, and $|d|\geqslant 2$ by  \eqref{d}. If $\nu=0$, it does not have nonzero solutions; if $\nu\neq 0$, there are at least two such solutions. So this case is impossible.

If $\mu_1\mu_2\neq 0$ and $\nu=0$, then the solutions of \eqref{root} coincide with the roots of
$\mu_1a_1t^{d_1-d_2}+\mu_2a_2$.\;In view of  \eqref{sub}, there are at least two distinct roots, so this case is impossible as well.

Let $\mu_1\mu_2\nu\neq 0$ and $d_2>0$. Denote by $f$ be the right-hand side of \eqref{root}.\;Set
\begin{equation}\label{h}
h:=d_1\hskip -.3mm f-t\frac{df}{dt}=(d_1-d_2)\mu_2a_2t^{d_2}+d_1\hskip -.2mm\nu.
\end{equation}
By \eqref{root} and \eqref{h}, for a fixed $a_2$, there are only finitely many $a_1$'s such that the polynomials $f$ and $h$ have a
%%joint
common root.\;Since every multiple root of $f$ is also a root of $h$, this means that there are
points $a\in U$ such that
$f$ does not have multiple roots.\;From \eqref{sub}, \eqref{d} it then follows that for such a point $a$ equation
\eqref{root}
%%does not have a single
has at least two  nonzero solutions.\;Thus, this case is also impossible.

Finally, let  $\mu_1\mu_2\nu\neq 0$ and $d_2<0$.\;Then the solutions of equation \eqref{root} coincide with the roots of the polynomial
$q:=\mu_1a_1t^{d_1-d_2}+\nu t^{-d_2}+\mu_2a_2$. We have
\begin{equation}\label{last}
p:=(d_1-d_2)q-t\frac{dq}{dt}=(d_1-d_2)\mu_2a_2+d_1\nu t^{-d_2}.
\end{equation}
Then the same argument as above with $f$ and $h$ replaced resp.\;by
$q$ and $p$ shows that
this case is impossible as well.

This contradiction completes the proof.
\quad $\square$
\renewcommand{\qed}{}\end{proof}
\end{example}

%%%%%%%%%%%%%%%%%%%

%%\bibliographystyle{ws-procs9x6}
%%\bibliography{cremona}

\end{document}